\documentclass[12pt]{amsart}

\usepackage{amsmath,amssymb,verbatim,url,hyperref}
\usepackage{mathrsfs,color}
\usepackage{enumerate,graphicx}
\usepackage{epstopdf}
\usepackage[active]{srcltx}
\usepackage{lineno}

\usepackage{enumitem}
\usepackage{tikz}
\usetikzlibrary{arrows.meta,positioning,calc}

\theoremstyle{plain}
\newtheorem{theorem}{Theorem}[section]
\newtheorem{proposition}[theorem]{Proposition}
\newtheorem{lemma}[theorem]{Lemma}
\newtheorem{corollary}[theorem]{Corollary}
\newtheorem{conjecture}[theorem]{Conjecture}
\theoremstyle{definition}
\newtheorem{definition}[theorem]{Definition}

\theoremstyle{remark}
\newtheorem{remark}[theorem]{Remark}

\newcommand{\Dx}{\mathbb{D}_{x}}
\newcommand{\Sx}{\mathbb{S}_{x}}

\newcommand{\Int}{\mathbb{Z}}
\newcommand{\Cx}{\mathbb{C}}
\newcommand{\qbinom}[2]{\genfrac{[}{]}{0pt}{}{#1}{#2}_{q}}
\newcommand{\ph}[2]{\left(#1;q\right)_{#2}}
\newcommand{\qhyp}[2]{{}_{#1}\phi_{#2}}
\newcommand{\hyp}[2]{{}_{#1}F_{#2}}
\newcommand{\eit}{e^{i\theta}}
\newcommand{\Bs}{\mathcal{B}}

\title[Bernstein-type bases on $q$-quadratic lattices]{Bernstein-type bases on $q$-quadratic lattices and Askey--Wilson\slash $q$-Racah connection coefficients}

\author[Area]{Iv\'an Area}
\address[Area]{IFCAE, Departamento de Matem\'atica Aplicada II,  E.E. Aeron\'autica e do Espazo, Universidade de Vigo, Campus de Ourense, 32004 Ourense, Spain.}
\email[Area]{area@uvigo.gal}

\begin{document}

\subjclass[2020]{Primary 33D45, 33C45; Secondary 33D15, 41A10, 42C05, 65D17}
\keywords{Bernstein polynomials; B\'ezier curves; nonuniform lattices; Askey--Wilson
polynomials; Wilson polynomials; $q$-Racah polynomials; Racah polynomials; connection
coefficients; divided-difference operators; computer-aided geometric design}

\begin{abstract}
\noindent
On the $q$-quadratic lattices that carry the Askey--Wilson polynomials, the two roles played
by a single Bernstein basis on the linear lattice, namely being a nonnegative partition of
unity and carrying an orthogonal connection with the classical families, split between two
distinct bases. The affine (spectral) basis retains the B\'ezier properties. A first-order lattice ladder exists in degrees at most two, but fails in degree three for every $q$, as shown by an explicitly factorised
determinant. The affine factors do not satisfy the weight-shift mechanism that produces the $q$-Racah connection for the generalized-power basis. The
generalized-power basis, built from products of two Askey--Wilson monomials,
does not furnish a nonnegative partition of unity in a natural positive parameter
region, specified in the text; there the unique normalisation summing to
unity has sign-changing coefficients. Nevertheless, its connection coefficients with the
Askey--Wilson polynomials are identified completely: they are $q$-Racah polynomials
$R_m(\mu(k);\,ad/q,\,bc/q,\,q^{-n-1},\,a/b\,|\,q)$ multiplied by fully explicit prefactors.
The proof is bispectral, valid for every degree, and rests on the tridiagonal action of the
Askey--Wilson operator on the generalized-power basis, with explicit band coefficients.
A scaled limit recovers the Wilson/Racah connection, while a second limit gives the $q$-Hahn polynomial part of the normalised $q$-linear connection coefficients. A numerical case study on the NACA 2412 airfoil illustrates the orthogonal machinery.
\end{abstract}

\maketitle

\section{Introduction}\label{sec:intro}

\subsection{Bernstein bases and the connection problem}
Let $\Pi_n$ denote the space of algebraic polynomials of degree at most $n$. The classical
Bernstein basis of $\Pi_n$ on $[0,1]$,
\begin{equation}\label{eq:classicalBern}
b^n_k(x)=\binom{n}{k}x^{k}(1-x)^{n-k},\qquad 0\le k\le n,
\end{equation}
is one of the fundamental tools of approximation theory and of computer-aided geometric
design (CAGD), where the B\'ezier curve $\sum_k \mathbf{w}_k\,b^n_k(t)$ with control points
$\mathbf{w}_k$ inherits from \eqref{eq:classicalBern} its convex-hull confinement, endpoint
interpolation, variation diminution and the de Casteljau evaluation algorithm
\cite{Farin2002,Prautzsch2002}. In numerical work one repeatedly needs to pass between the
Bernstein representation of a polynomial and its expansion in a family of orthogonal
polynomials; this is the \emph{connection problem}, and its coefficients are the object of a
long line of work initiated for Bernstein bases in \cite{RAGZ1998}.

The pattern that emerges from that line of work is remarkably uniform: when a Bernstein-type
basis is expanded in an orthogonal family sitting on a given lattice, the connection
coefficients themselves form a discrete orthogonal family one level higher on the discrete
side of the Askey scheme. Three results make this precise and motivate the present paper.

\emph{(i) The linear lattice \textup{\cite{RAGZ1998}}.} Ronveaux, Zarzo, Area and Godoy
expanded the continuous and discrete Bernstein bases in shifted Jacobi and Hahn polynomials,
and showed that the connection coefficients are \emph{Hahn} and \emph{Hahn--Eberlein}
polynomials. The engine is a recurrence for the coefficients obtained from a first-order
relation for the Bernstein element together with the three-term recurrence and the structure
relation of the target family.

\emph{(ii) The $q$-linear lattice \textup{\cite{ARG2004}}.} Area, Godoy, Lewanowicz and
Wo\'zny expanded the $q$-Bernstein basis of Phillips \cite{Phillips1997} in little and big
$q$-Jacobi polynomials, obtaining connection coefficients of \emph{$q$-Hahn} and \emph{dual
$q$-Hahn} type, and recovering (i) as $q\uparrow1$. The same paper applies the identities to
$q$-B\'ezier curves.

\emph{(iii) The multivariate $q$-linear lattice \textup{\cite{LWAG2008}}.} Lewanowicz,
Wo\'zny, Area and Godoy introduced generalized Bernstein polynomials
$B^n_{\mathbf k}(\mathbf x;\omega\,|\,q)$ of several variables and connected the bivariate
case with bivariate big $q$-Jacobi polynomials, the coefficients being \emph{bivariate
$q$-Hahn} polynomials; classical multivariate Bernstein--Jacobi connections are recovered in
the limit.

\subsection{The missing rung}
Families (i)--(iii) live on the linear and $q$-linear lattices, whose orthogonal summits are
the Jacobi/Hahn and $q$-Jacobi/$q$-Hahn families. Above them stand the two most general
lattices of the Askey scheme, the \emph{quadratic} and \emph{$q$-quadratic} lattices, whose
continuous orthogonal summits are the Wilson and Askey--Wilson polynomials and whose finite
discrete summits are the Racah and $q$-Racah polynomials \cite{KLS2010,Ismail2005,NSU1991}.
The natural completion of the programme is therefore:

\begin{equation}\label{eq:program}
\vcenter{\hbox{%
\begin{tikzpicture}[>=Stealth,
  box/.style={draw,rounded corners,align=center,inner sep=4pt,font=\small}]
\node[box] (A) {Bernstein basis on the\\ $(q\text{-})$quadratic lattice};
\node[box,right=13mm of A] (B) {(Askey--)Wilson\\ polynomials};
\node[box,right=22mm of B] (C) {$(q\text{-})$Racah\\ coefficients};
\draw[<->] (A) -- (B);
\draw[->] (B) -- (C) node[midway,above,font=\scriptsize]{connection};
\end{tikzpicture}}}
\end{equation}
The purpose of this paper is to set up \eqref{eq:program} rigorously in one variable: to define
the two Bernstein-type bases on a nonuniform lattice, to separate their geometric and
connection-theoretic roles, to establish the B\'ezier properties of the affine basis, and to prove
the connection theorem, and to show that the Wilson/Racah limit recovers the quadratic
level, while the normalised coefficient limit reaches the $q$-Hahn level.
A full identification of the rescaled basis with a fixed $q$-Bernstein family is not
claimed. A multivariate extension, treating several
variables through the nested-product (Koornwinder) construction so as to recover (iii), is in
preparation.

\subsection{What is proved: a structural dichotomy}
On the linear and $q$-linear lattices, a single object plays two roles at once. The discrete
Bernstein basis $b^n_k(N,x)=\binom{n}{k}x^{[k]}(N-x)^{[n-k]}/N^{[n]}$ of \cite{RAGZ1998} and
the $q$-linear basis of \cite{LWAG2008} are simultaneously (a) partitions of unity (through the
Chu--Vandermonde and $q$-Vandermonde identities) and (b) bases whose connection coefficients
with the Jacobi/$q$-Jacobi families are the Hahn/$q$-Hahn (and Hahn--Eberlein) polynomials.
These two features rest on the \emph{same} product of falling factorials.

On the $q$-quadratic lattice the two features come apart, and this splitting is
the organising principle of the paper. The quadratic Wilson/Racah connection is then
recovered by the scaled limit of \S\ref{sec:qtoone}.
\begin{itemize}[leftmargin=1.4em,itemsep=2pt]
\item We prove (Proposition~\ref{prop:obstruction}) that within the natural lattice family
(products of two generalized powers, one anchored at each endpoint) the binomially
normalised sum is not constant, and the \emph{unique} normalisation that does sum to unity,
which we determine in closed form, has sign-changing coefficients in the positive
parameter region specified there. Thus the natural family
does not furnish a nonnegative Bernstein-type partition of unity in that region.
\item As the CAGD substitute we adopt the \emph{affine \textup(spectral\textup)} basis
\eqref{eq:bern}, the classical Bernstein basis in the spectral coordinate
(\S\ref{sec:spectral}); it has the affine B\'ezier properties
(Propositions~\ref{prop:bezier}--\ref{prop:decasteljau}), but it interacts poorly with the
lattice calculus: in degree three the first-order ladder admits only the trivial solution for every $q\in(0,1)$, by the explicitly factorised determinant of Proposition~\ref{prop:ladder}. Moreover, the affine factors do not satisfy the weight-shift identity underlying the $q$-Racah connection, as shown in Proposition~\ref{prop:spectralnotOP}. The stronger nonorthogonality statement is retained only as Conjecture~\ref{conj:spectralnotOP}.
\item The \emph{generalized-power} basis \eqref{eq:genbern}, built from the products
$\varphi_k(x;a)\varphi_{n-k}(x;b)$ of Askey--Wilson monomials,
does not yield a nonnegative partition of unity in the positive parameter regime
specified in Proposition~\ref{prop:obstruction}, but its connection coefficients with the Askey--Wilson polynomials \emph{do} form a
discrete orthogonal family with respect to a (generally sign-changing, quasi-definite) weight
in the B\'ezier index (\S\ref{sec:main}). This is the object of \eqref{eq:program}.
\end{itemize}
Thus \eqref{eq:program} is realised not by the CAGD basis but by its connection-theoretic
counterpart. A
weight-shift lemma (Lemma~\ref{lem:wshift}) shows that the B\'ezier index enters these
coefficients only through the shifted parameters $aq^{k},bq^{n-k}$, yielding a closed form
(Corollary~\ref{cor:closedform}); the bispectral analysis of \S\ref{sec:main} identifies them
completely as $q$-Racah polynomials with parameters $(ad/q,\,bc/q,\,q^{-n-1},\,a/b)$ and
fully explicit prefactors (Theorem~\ref{thm:explicit}). On the linear lattice the
construction reduces to the Hahn--Eberlein coefficients of \cite{RAGZ1998}, and on the
$q$-linear level the normalised connection coefficients degenerate to the $q$-Hahn
coefficients of \cite{ARG2004} (\S\ref{sec:limits}).

Throughout we use the central (symmetric) divided-difference formulation, following the
conventions of \cite{NSU1991,Foup2008,FKKM2013}, because it keeps manifest the
reflection symmetry of each lattice and behaves well under the limit transitions.

\section{Definitions and notation}\label{sec:prelim}

\subsection{Basic hypergeometric notation}
For $q\in(0,1)$ and $c\in\Cx$ set $\ph{c}{0}:=1$, $\ph{c}{k}:=\prod_{j=0}^{k-1}(1-cq^{j})$,
and $\ph{c_1,\dots,c_r}{k}:=\prod_{i=1}^{r}\ph{c_i}{k}$. The $q$-number and $q$-factorial are
$[k]_q:=(1-q^{k})/(1-q)$ and $[k]_q!:=[1]_q\cdots[k]_q$, and the $q$-binomial coefficient is
$\qbinom{n}{k}:=[n]_q!/([k]_q!\,[n-k]_q!)$. The basic and ordinary hypergeometric series are
\begin{align}
\qhyp{r}{s}\!\left[\genfrac{}{}{0pt}{}{a_1,\dots,a_r}{b_1,\dots,b_s};q,z\right]
&=\sum_{k\ge0}\frac{\ph{a_1,\dots,a_r}{k}}{\ph{q,b_1,\dots,b_s}{k}}
\Big((-1)^kq^{\binom k2}\Big)^{1+s-r}z^k,\\
\hyp{r}{s}\!\left[\genfrac{}{}{0pt}{}{a_1,\dots,a_r}{b_1,\dots,b_s};z\right]
&=\sum_{k\ge0}\frac{(a_1)_k\cdots(a_r)_k}{(b_1)_k\cdots(b_s)_k}\frac{z^k}{k!},
\end{align}
with $(c)_k=\prod_{j=0}^{k-1}(c+j)$ the Pochhammer symbol; for the standard theory of
${}_r\phi_s$ transformations and summations we refer to \cite{GasperRahman2004}.

\subsection{Nonuniform lattices and the divided-difference calculus}\label{sec:lattices}
A one-dimensional lattice is a map $s\mapsto x(s)$. Write $x_\mu(s):=x(s+\mu/2)$ for
$\mu\in\Int$. The central divided difference and the averaging operator
are~\cite{NSU1991,Foup2008,FKKM2013,FoupMbout2019}
\begin{equation}\label{eq:DS}
\Dx f(s)=\frac{f\!\left(s+\tfrac12\right)-f\!\left(s-\tfrac12\right)}{x\!\left(s+\tfrac12\right)-x\!\left(s-\tfrac12\right)},
\qquad
\Sx f(s)=\tfrac12\Big[f\!\left(s+\tfrac12\right)+f\!\left(s-\tfrac12\right)\Big].
\end{equation}
For a function $F$ of the lattice variable, $f(s)=F(x(s))$, the operator $\Dx$ acts as a
divided difference in $x$ and lowers polynomial degree in $x$ by one, while $\Sx$ preserves
it. They obey the product rules
\begin{equation}\label{eq:leibniz}
\Dx(fg)=\Sx f\,\Dx g+\Dx f\,\Sx g,\qquad
\Sx(fg)=\Sx f\,\Sx g+\tfrac14\,U(s)\,\Dx f\,\Dx g,
\end{equation}
with $U(s)=\big(x(s+\tfrac12)-x(s-\tfrac12)\big)^{2}$, a polynomial of degree $\le2$ in
$x(s)$ on the lattices below. The two admissible nonuniform lattices are
\cite{NSU1991,KLS2010}
\begin{equation}\label{eq:latticetypes}
\text{quadratic:}\quad x(s)=c_2 s^2+c_1 s+c_0,\qquad
\text{$q$-quadratic:}\quad x(s)=c_1 q^{s}+c_2 q^{-s}+c_0 .
\end{equation}
The Racah and Wilson families sit on the quadratic lattice; the $q$-Racah and Askey--Wilson
families sit on the $q$-quadratic lattice. Under the parametrised scalings of
\S\ref{sec:limits} the $q$-quadratic lattice contracts to the quadratic one, and the
quadratic lattice further contracts to the linear lattice $x(s)=s$ of the Hahn family, with
$\Dx\to\frac{d}{dx}$, $\Sx\to\mathrm{Id}$ \cite[\S18.28]{DLMF}.

\subsubsection*{The polynomial variable}
Three variables must be kept apart: the \emph{lattice parameter} $s$; the \emph{polynomial
variable} $X$, in which all polynomial spaces are taken; and, on the $q$-quadratic lattice,
the \emph{Laurent \textup(trigonometric\textup) variable} $z=\eit$. Throughout this paper we
work on the $q$-quadratic lattice with the standard Askey--Wilson parametrisation
$X=x=\cos\theta=\tfrac12(z+z^{-1})$, $z=\eit=q^{s}$, $x\in[-1,1]$, and
\[
\Pi_n:=\Cx[x]_{\le n},
\]
the polynomials of degree at most $n$ \emph{in $x$}. On the quadratic lattice the polynomial
variable is $X=x^{2}$ \textup(Wilson\textup) or $X=\lambda(s)$ \textup(Racah\textup): the
Wilson generalized power $\rho_k(x;a)=(a+ix)_k(a-ix)_k$ has degree $k$ in $x^{2}$ but $2k$ in
$x$, so statements about $\Pi_n$ must always be read in the appropriate variable $X$. The
role of the monomials $X^{k}$ is played, on the $q$-quadratic lattice, by the
\emph{Askey--Wilson monomials}
\begin{equation}\label{eq:phi}
\varphi_k(x;a):=(a\eit,ae^{-i\theta};q)_k=\prod_{j=0}^{k-1}\big(1-2aq^{j}x+a^{2}q^{2j}\big),
\qquad \deg_x\varphi_k=k,
\end{equation}
on which $\Dx$ and $\Sx$ act by lowering/preserving the degree and shifting the parameter
$a\mapsto aq^{1/2}$ \cite{Ismail2005}. On the quadratic lattice the corresponding objects are
the Wilson generalized powers $\rho_k(x;a)=(a+ix)_k(a-ix)_k=\prod_{j=0}^{k-1}\big((a+j)^2+x^2\big)$.

\subsection{The orthogonal summits}\label{sec:summits}
The Askey--Wilson polynomials, introduced in \cite{AskeyWilson1979}, are
\cite[\S14.1]{KLS2010}
\begin{equation}\label{eq:AW}
p_n(x;a,b,c,d\,|\,q)=a^{-n}\ph{ab,ac,ad}{n}\;
\qhyp{4}{3}\!\left[\genfrac{}{}{0pt}{}{q^{-n},\,abcd\,q^{n-1},\,a\eit,\,ae^{-i\theta}}{ab,\ ac,\ ad};q,q\right],
\end{equation}
polynomials of degree $n$ in $x=\cos\theta$. Their $q\to1$ limit is the Wilson polynomial
$W_n(x^2;a,b,c,d)$, a polynomial of degree $n$ in $x^2$ \cite[\S9.1]{KLS2010}. The finite
discrete summits are the $q$-Racah polynomials
\begin{equation}\label{eq:qRacah}
R_n(\mu(x);\alpha,\beta,\gamma,\delta\,|\,q)=
\qhyp{4}{3}\!\left[\genfrac{}{}{0pt}{}{q^{-n},\,\alpha\beta q^{n+1},\,q^{-x},\,\gamma\delta q^{x+1}}{\alpha q,\ \beta\delta q,\ \gamma q};q,q\right],
\end{equation}
where $\mu(x)=q^{-x}+\gamma\delta q^{x+1}$ and $n=0,1,\dots,N$, and their $q\to1$ limit, the Racah polynomials
$R_n(\lambda(x);\alpha,\beta,\gamma,\delta)$ with $\lambda(x)=x(x+\gamma+\delta+1)$
\cite[\S9.2]{KLS2010}. Under the Askey-scheme arrows one has the reductions
\begin{center}
\begin{tikzpicture}[>=Stealth,every node/.style={font=\small,inner sep=2pt}]
\node (r1) {$(q\text{-})$Racah};
\node (r2) [right=12mm of r1] {(dual) $(q\text{-})$Hahn};
\node (r3) [right=12mm of r2] {Hahn};
\node (w1) [below=6mm of r1] {(Askey--)Wilson};
\node (w2) [below=6mm of r2] {$(q\text{-})$Jacobi};
\node (w3) [below=6mm of r3] {Jacobi};
\draw[->] (r1)--(r2); \draw[->] (r2)--(r3);
\draw[->] (w1)--(w2); \draw[->] (w2)--(w3);
\end{tikzpicture}
\end{center}
which we shall use in \S\ref{sec:limits}.

\section{An obstruction on the $q$-quadratic lattice}\label{sec:obstruction}

On the linear lattice the discrete Bernstein basis
$b^n_k(N,x)=\binom nk x^{[k]}(N-x)^{[n-k]}/N^{[n]}$, with
$x^{[k]}=x(x-1)\cdots(x-k+1)$, is a partition of unity because of the Chu--Vandermonde
identity
\begin{equation}\label{eq:chuvander}
\sum_{k=0}^{n}\binom nk x^{[k]}(N-x)^{[n-k]}=\big(x+(N-x)\big)^{[n]}=N^{[n]} .
\end{equation}
The $q$-linear basis of \cite{LWAG2008} is a partition of unity for the same reason, via the
$q$-Vandermonde identity. The natural quadratic-lattice analogue would replace the falling
factorials by the generalized powers $A_k(x)=\prod_{j=0}^{k-1}(\lambda(x)-\lambda(j))$ and
$B_m(x)=\prod_{j=0}^{m-1}(\lambda(N-j)-\lambda(x))$, or, on the $q$-quadratic lattice, by the
Askey--Wilson monomials $\varphi_k(x;a),\varphi_{m}(x;b)$ of \eqref{eq:phi}. The following
negative result shows that the analogy breaks.

\begin{proposition}\label{prop:obstruction}
Write 
\[
S_n(x;a,b)=\sum_{k=0}^{n}\binom nk\varphi_k(x;a)\varphi_{n-k}(x;b)
\] 
and $x_0(\alpha)=\tfrac12(\alpha+\alpha^{-1})$. Then
\begin{equation}\label{eq:Sendpoints}
S_n\big(x_0(a);a,b\big)=\ph{ab}{n}\ph{b/a}{n},\qquad
S_n\big(x_0(b);a,b\big)=\ph{ab}{n}\ph{a/b}{n}.
\end{equation}
Assume $\ph{b/a}{n}\ne\ph{a/b}{n}$; this holds, for example, whenever $0<b<a<1$ and
$a/b<q^{-1}$, since then $\ph{b/a}{n}>0>\ph{a/b}{n}$, and in general the excluded set is a
finite union of curves in the $(a,b)$-plane containing $a=b$. Then, for $n\ge1$:
\begin{enumerate}[label=\textup{(\roman*)},leftmargin=2.2em,itemsep=1pt]
\item $S_n(\cdot;a,b)$ is not constant: the binomially normalised family is not a partition
of unity, and no \emph{single common} scalar multiple $c\,\widetilde\Bs^{\,n}_k$ makes it one;
\item if the nonvanishing assumptions of Lemma~\ref{lem:basis} also hold, then
$\{\widetilde\Bs^{\,n}_k\}$ is a basis and there is exactly one $k$-dependent
normalisation summing to unity, namely
$1=\sum_k\pi_{n,k}\widetilde\Bs^{\,n}_k$ with the explicit coefficients
\eqref{eq:prefactor} of Theorem~\ref{thm:explicit};
\item under the assumptions of \textup{(ii)} and the additional conditions
$0<a,b<1$, $q<a/b<q^{-1}$, $a\ne b$, this unique normalisation is not a Bernstein-type partition of
unity: each $\widetilde\Bs^{\,n}_k$ is strictly positive on $[-1,1]$, whereas
$\pi_{n,0}\pi_{n,n}<0$, so some functions
$\pi_{n,k}\widetilde\Bs^{\,n}_k$ are negative on all of $[-1,1]$.
\end{enumerate}
An analogous endpoint obstruction can be formulated for Wilson generalized powers on
the quadratic lattice. Since the present article develops the complete connection theorem
in the $q$-quadratic setting, we do not state a separate Wilson obstruction theorem here.
\end{proposition}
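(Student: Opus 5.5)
The plan is to first evaluate $S_n$ at the two special points, then deduce (i) by comparing these values, (ii) from linear algebra, and (iii) from a sign analysis.

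\emph{Endpoint values.} At $x=x_0(a)$ we have $e^{i\theta}=a$, so $\varphi_k(x_0(a);a)=(a^2,1;q)_k$. This vanishes for $k\ge1$ because $(1;q)_k=0$. Hence only the $k=0$ term survives, and
\[
S_n(x_0(a);a,b)=\varphi_n(x_0(a);b)=(ab,b/a;q)_n .
\]
Symmetrically, at $x_0(b)$ only $k=n$ survives and gives $(ab,a/b;q)_n$. This yields \eqref{eq:Sendpoints}.

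\emph{Parts (i) and (ii).} For (i), note that $S_n$ is a polynomial in $x$. The two evaluations differ as soon as $(ab;q)_n\neq0$ and $(b/a;q)_n\ne(a/b;q)_n$. I would state the hypothesis so that it absorbs $(ab;q)_n\ne0$, or note that $(ab;q)_n=0$ lies in the excluded curves. Then $S_n$ is nonconstant, so $cS_n\equiv1$ is impossible for every scalar $c$.

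For the example, take $0<b<a<1$ and $a/b<q^{-1}$. Then $b/a\in(0,1)$, so every factor of $(b/a;q)_n$ is positive. For $(a/b;q)_n$, the first factor $1-a/b$ is negative, while the factors $1-(a/b)q^j$ with $j\ge1$ are positive because $aq/b<1$.

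For (ii), the functions $\widetilde\Bs^{\,n}_k$ are $n+1$ elements of $\Pi_n$, and Lemma~\ref{lem:basis} makes them a basis. Hence the expansion of the constant $1$ exists and is unique. Its coefficients are read off from the $m=0$ column of the connection formula of Theorem~\ref{thm:explicit}, since $p_0=1$; that is what \eqref{eq:prefactor} records. I would simply cite it.

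\emph{Part (iii), positivity of the basis.} Each factor $1-2aq^jx+a^2q^{2j}=|1-aq^je^{i\theta}|^2$ is strictly positive for $0<a<1$ and real $x\in[-1,1]$. The same holds for $b$, and the binomial prefactors are positive, so every $\widetilde\Bs^{\,n}_k$ is strictly positive on $[-1,1]$.

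\emph{Part (iii), the sign of $\pi_{n,0}\pi_{n,n}$.} I would compute $\pi_{n,0}$ and $\pi_{n,n}$ independently of the explicit formula, again by evaluation, since $x_0(a)$ kills every term with $k\ge1$. This requires $x_0(a)$ to be a legitimate evaluation point. It lies outside $[-1,1]$, but that is irrelevant for a polynomial identity. Evaluating $1=\sum_k\pi_{n,k}\widetilde\Bs^{\,n}_k$ at $x_0(a)$ gives
\[
1=\pi_{n,0}\,\widetilde\Bs^{\,n}_0(x_0(a)),\qquad \widetilde\Bs^{\,n}_0(x_0(a))\propto(ab,b/a;q)_n ,
\]
and evaluating at $x_0(b)$ gives $1=\pi_{n,n}\,\widetilde\Bs^{\,n}_n(x_0(b))$ with $\widetilde\Bs^{\,n}_n(x_0(b))\propto(ab,a/b;q)_n$, up to the same kind of positive normalisation.

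Therefore
\[
\operatorname{sign}(\pi_{n,0}\pi_{n,n})=\operatorname{sign}\big((ab;q)_n^2\,(b/a;q)_n\,(a/b;q)_n\big),
\]
assuming the normalising constants in $\widetilde\Bs$ are positive. Under $q<a/b<q^{-1}$ with $a\ne b$, exactly one of $a/b$ and $b/a$ lies in $(1,q^{-1})$. That Pochhammer symbol has exactly one negative factor, while the other lies in $(q,1)$ and is positive. So the product is negative.

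Consequently $\pi_{n,0}$ and $\pi_{n,n}$ have opposite signs, and one of them is negative. Multiplying that coefficient by the strictly positive $\widetilde\Bs^{\,n}_0$ or $\widetilde\Bs^{\,n}_n$ gives a function that is negative on all of $[-1,1]$.

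\emph{Main obstacle.} The step I expect to require the most care is matching the normalisation of $\widetilde\Bs^{\,n}_k$ from \eqref{eq:genbern} to confirm that its constant prefactors are positive. If they involve signed Pochhammer symbols, the sign bookkeeping above must be adjusted. The endpoint-evaluation argument itself is robust.
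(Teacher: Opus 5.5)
Your proposal is correct and follows the paper's proof essentially step for step: you evaluate at $x_0(a)$ and $x_0(b)$, get uniqueness from Lemma~\ref{lem:basis} with the coefficients cited from Theorem~\ref{thm:explicit}, use the factorwise positivity of the basis, and count signs in $\pi_{n,0}\pi_{n,n}$, which you obtain by the same endpoint evaluation the paper uses in Step~6. Your remark that (i) also needs $\ph{ab}{n}\ne0$ is a fair sharpening of the paper's hypothesis, and your worry about normalising constants is moot: $\widetilde\Bs^{\,n}_0$ and $\widetilde\Bs^{\,n}_n$ carry the factor $\binom n0=\binom nn=1$, so $\pi_{n,0}=1/[\ph{ab}{n}\ph{b/a}{n}]$ and $\pi_{n,n}=1/[\ph{ab}{n}\ph{a/b}{n}]$ exactly.
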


\begin{proof}
The factor $\varphi_1(x;\alpha)=1-2\alpha x+\alpha^2$ vanishes at $x=x_0(\alpha)$, since
$1-\alpha(\alpha+\alpha^{-1})+\alpha^2=0$; hence $\varphi_k(x_0(a);a)=0$ for every $k\ge1$, and
in $S_n(x_0(a);a,b)$ only the term $k=0$ survives, giving
$S_n(x_0(a);a,b)=\varphi_n(x_0(a);b)$. Using the elementary factorisation
$1-\beta(\alpha+\alpha^{-1})+\beta^{2}=(1-\alpha\beta)(1-\alpha^{-1}\beta)$ with
$\alpha=a$, $\beta=bq^{j}$,
\[
\varphi_n(x_0(a);b)=\prod_{j=0}^{n-1}\big(1-bq^{j}(a+a^{-1})+b^{2}q^{2j}\big)
=\prod_{j=0}^{n-1}(1-abq^{j})(1-a^{-1}bq^{j})=\ph{ab}{n}\ph{b/a}{n},
\]
which is the first identity in \eqref{eq:Sendpoints}; the second follows by exchanging
$a\leftrightarrow b$. Under the stated hypothesis $S_n$ takes two distinct values, giving
\textup{(i)}. Under the additional assumptions in \textup{(ii)}, the basis property gives
uniqueness, and the coefficients are identified in Theorem~\ref{thm:explicit}
\textup(whose Step~3 is independent of this proposition\textup). For \textup{(iii)}: each factor of $\varphi_k(x;\alpha)$,
$0<\alpha<1$, satisfies $1-2\alpha q^{i}x+\alpha^{2}q^{2i}\ge(1-\alpha q^{i})^{2}>0$ on
$[-1,1]$, so $\widetilde\Bs^{\,n}_k>0$ there; and
$\pi_{n,0}\pi_{n,n}=1/[\ph{b/a}{n}\ph{a/b}{n}\ph{ab}{n}^{2}]$, where for $q<a/b<q^{-1}$,
$a\ne b$, the products $\ph{b/a}{n}$ and $\ph{a/b}{n}$ have first factors of opposite sign
and all later factors positive, so $\pi_{n,0}\pi_{n,n}<0$.
\end{proof}

\begin{remark}[why the analogy fails]\label{rem:whyfail}
The Chu--Vandermonde cancellation \eqref{eq:chuvander} relies on the two endpoint factors
carrying \emph{opposite} signs of the leading term: $x^{[k]}$ has leading term $+x^k$ and
$(N-x)^{[m]}$ has leading term $(-1)^m x^{m}$. On the quadratic lattice both endpoint
generalized powers are built from factors $\big((a+j)^2+x^2\big)$ (Wilson) or
$\big(1-2aq^jx+a^2q^{2j}\big)$ (Askey--Wilson) whose leading term in the geometric variable
has a fixed sign, so no cancellation to a constant is possible. This is the precise reason a
naive ``discrete Bernstein basis'' does not exist on nonuniform lattices. It does \emph{not}
follow that the product $\varphi_k(x;a)\varphi_{n-k}(x;b)$ is useless, only that it is not a
partition of unity. In \S\ref{sec:main} we shall see that it is exactly this product that
carries the orthogonal connection with the Askey--Wilson polynomials. The two roles that
coincide on the linear lattice are thus split, and \S\ref{sec:bernstein} introduces the two
resulting bases.
\end{remark}

\section{Two Bernstein bases on a nonuniform lattice}\label{sec:bernstein}

\subsection{The spectral Bernstein basis}\label{sec:spectral}
Let $L$ be a nonuniform lattice of one of the types \eqref{eq:latticetypes} and let $\xi$ be
its \emph{spectral variable}: $\xi=\lambda(s)=x(s)$ on the quadratic lattice (Wilson/Racah),
and $\xi=x=\cos\theta$ on the $q$-quadratic lattice (Askey--Wilson). Fix an interval
$[\xi_0,\xi_1]$ containing the lattice support, and define the two barycentric lattice
coordinates
\begin{equation}\label{eq:baryc}
u(\xi)=\frac{\xi-\xi_0}{\xi_1-\xi_0},\qquad v(\xi)=\frac{\xi_1-\xi}{\xi_1-\xi_0},
\qquad u(\xi)+v(\xi)=1 .
\end{equation}

\begin{definition}\label{def:bern}
The \emph{affine \textup(spectral\textup) Bernstein basis of degree $n$} is
\begin{equation}\label{eq:bern}
\Bs^n_k(\xi):=\binom nk\, u(\xi)^{k}\,v(\xi)^{n-k},\qquad 0\le k\le n ,
\end{equation}
i.e.\ the classical Bernstein basis in the spectral coordinate $\xi$. Its natural abscissae
are $\xi^\ast_k=\xi_0+\frac kn(\xi_1-\xi_0)$, the Bernstein abscissae \textup(called
Greville abscissae in the B-spline context\textup); for a nonuniform lattice these do not, in
general, lie on $L$.
\end{definition}

As the name says, \eqref{eq:bern} is the classical basis after an affine change of variable:
the lattice enters not through the algebraic form of $\Bs^n_k$ but through the geometric
variable $\xi$ being the spectral variable of $L$, so that the divided-difference operators
$(\Dx,\Sx)$ of \S\ref{sec:lattices} act on $\Bs^n_k$. Proposition~\ref{prop:obstruction}
explains why we adopt it for the CAGD role: within the natural lattice family the binomial
normalisation is not a partition of unity, and the unique unity normalisation is sign-changing
in the positive parameter region of Proposition~\ref{prop:obstruction}; hence the barycentric
pair \eqref{eq:baryc} is the CAGD substitute used here. We make no uniqueness claim: other
normalised nonnegative bases of $\Pi_n$ exist; \eqref{eq:bern} is singled out only as the
affine image of the classical basis. It is a natural basis for the B\'ezier curves of
\S\ref{sec:apps}; its role in the connection problem is, however, limited
(Propositions~\ref{prop:ladder} and \ref{prop:spectralnotOP}).

\subsection{B\'ezier properties}
\begin{proposition}\label{prop:bezier}
The basis \eqref{eq:bern} has the following properties.
\begin{enumerate}[label=\textup{(P\arabic*)},leftmargin=2.6em,itemsep=1pt]
\item \textbf{Basis.} $\{\Bs^n_k\}_{k=0}^n$ is a basis of $\Pi_n$ in the variable $\xi$.
\item \textbf{Partition of unity.} $\sum_{k=0}^n\Bs^n_k(\xi)=\big(u(\xi)+v(\xi)\big)^n=1$.
\item \textbf{Nonnegativity.} $\Bs^n_k(\xi)\ge0$ for $\xi\in[\xi_0,\xi_1]$, hence the
B\'ezier curve lies in the convex hull of its control points.
\item \textbf{Endpoint interpolation.} $\Bs^n_k(\xi_0)=\delta_{k,0}$ and
$\Bs^n_k(\xi_1)=\delta_{k,n}$.
\item \textbf{Symmetry.} $\Bs^n_k(\xi)=\Bs^n_{n-k}(\xi_0+\xi_1-\xi)$.
\end{enumerate}
\end{proposition}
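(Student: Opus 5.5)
The plan is to reduce every property to the classical Bernstein basis through the affine change of variable $t=u(\xi)$. Since $\xi_1\neq\xi_0$, the map $\xi\mapsto u(\xi)$ is an affine bijection of $\Cx$, with $v(\xi)=1-u(\xi)$ by \eqref{eq:baryc}. Hence $\Bs^n_k(\xi)=b^n_k(u(\xi))$, where $b^n_k$ is the classical basis \eqref{eq:classicalBern}. Composition with an invertible affine map preserves $\Pi_n$ (in the variable $\xi$) and preserves linear independence. So everything known about $b^n_k$ on $[0,1]$ transfers verbatim to $\Bs^n_k$ on $[\xi_0,\xi_1]$.

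I would take the items in order.

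\textbf{(P1).} The $n+1$ polynomials $b^n_k(t)$ are linearly independent. To see this, note that $t^k(1-t)^{n-k}$ has lowest-order term $t^k$; a triangularity argument in the monomial basis then gives an invertible matrix, with diagonal entries $\binom nk\neq0$. Since $\dim\Pi_n=n+1$, they form a basis, and composing with the affine bijection $t=u(\xi)$ gives (P1).

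\textbf{(P2).} This is the binomial theorem, $\sum_k\binom nk u^k v^{n-k}=(u+v)^n=1$.

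\textbf{(P3).} For $\xi\in[\xi_0,\xi_1]$ we have $u(\xi),v(\xi)\in[0,1]$, because $\xi_1>\xi_0$ (the interval is taken nondegenerate and ordered). So each $\Bs^n_k$ is a product of nonnegative factors. The convex-hull statement follows by combining (P2) and (P3): $\sum_k\mathbf w_k\Bs^n_k(\xi)$ is a convex combination of the control points.

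\textbf{(P4).} We have $u(\xi_0)=0$, $v(\xi_0)=1$, $u(\xi_1)=1$, $v(\xi_1)=0$. With the convention $0^0=1$, $u^kv^{n-k}$ at $\xi_0$ is nonzero only for $k=0$, and at $\xi_1$ only for $k=n$.

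\textbf{(P5).} Put $\xi'=\xi_0+\xi_1-\xi$. A direct computation gives $u(\xi')=(\xi_1-\xi)/(\xi_1-\xi_0)=v(\xi)$, and likewise $v(\xi')=u(\xi)$. Together with $\binom n{n-k}=\binom nk$, this yields $\Bs^n_{n-k}(\xi')=\binom nk u(\xi)^kv(\xi)^{n-k}=\Bs^n_k(\xi)$.

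None of these steps is a real obstacle; the argument is a transfer of classical facts. The only point needing care is (P1): I would state explicitly that the relevant polynomial variable is $\xi$ (that is, $x$ on the $q$-quadratic lattice and $X=\lambda$ on the quadratic lattice, per the convention in \S\ref{sec:lattices}). I would also make explicit that $\xi_1-\xi_0\neq0$ and the convention $0^0=1$, since the whole proof rests on these assumptions.
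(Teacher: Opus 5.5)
Your proposal is correct and follows the paper's own route: the paper also transfers (P1)--(P5) from the classical Bernstein basis through the affine bijection $\xi\mapsto u(\xi)$ of $[\xi_0,\xi_1]$ onto $[0,1]$, and uses $u+v=1$ for (P2). You additionally write out the item-by-item details the paper leaves implicit, namely triangularity for (P1), the convention $0^0=1$ for (P4), and $u(\xi_0+\xi_1-\xi)=v(\xi)$ for (P5).
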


\begin{proof}
Properties (P1)--(P5) follow from the corresponding properties of the classical Bernstein
basis \eqref{eq:classicalBern} under the affine change of variable $\xi\mapsto u(\xi)$, which
is a bijection of $[\xi_0,\xi_1]$ onto $[0,1]$; \eqref{eq:baryc} gives $u+v=1$, whence (P2).
\end{proof}

The next two results are the genuinely lattice-dependent statements: they express the action
of the divided-difference calculus on \eqref{eq:bern} and provide a de Casteljau algorithm in
the operators $(\Dx,\Sx)$.

\begin{proposition}[degree elevation and de Casteljau]\label{prop:decasteljau}
The basis \eqref{eq:bern} satisfies the degree-elevation recurrence
\begin{equation}\label{eq:elevation}
\Bs^n_k(\xi)=\frac{n+1-k}{n+1}\,\Bs^{\,n+1}_k(\xi)+\frac{k+1}{n+1}\,\Bs^{\,n+1}_{k+1}(\xi),
\end{equation}
and, for a polynomial $P(\xi)=\sum_{k}c_k\Bs^n_k(\xi)$, the de Casteljau recursion
$c^{(0)}_k=c_k$, 
\[
c^{(r)}_k=v(\xi)\,c^{(r-1)}_k+u(\xi)\,c^{(r-1)}_{k+1}
\] 
terminates at
$P(\xi)=c^{(n)}_0$. Both are convex combinations for $\xi\in[\xi_0,\xi_1]$.
\end{proposition}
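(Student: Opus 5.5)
The plan is to reduce everything to the classical Bernstein identities in the barycentric pair $(u,v)$ of \eqref{eq:baryc}. The only structural input is the relation $u(\xi)+v(\xi)=1$. The lattice plays no role here, since $\Bs^n_k$ depends on $\xi$ only through $u$ and $v$.

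\textbf{Degree elevation.} I would multiply $\Bs^n_k$ by $1=u+v$:
\[
\Bs^n_k=\binom nk u^{k}v^{n+1-k}+\binom nk u^{k+1}v^{n-k}.
\]
Each term is then rewritten in the degree-$(n+1)$ basis. The identities needed are
\[
\binom nk=\frac{n+1-k}{n+1}\binom{n+1}{k},\qquad \binom nk=\frac{k+1}{n+1}\binom{n+1}{k+1},
\]
and together they give \eqref{eq:elevation} directly.

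\textbf{De Casteljau.} I would prove by induction on $r$ the invariant
\[
P(\xi)=\sum_{k=0}^{n-r}c^{(r)}_k\,\Bs^{\,n-r}_k(\xi),\qquad 0\le r\le n.
\]
At $r=n$ the sum has the single term $c^{(n)}_0\Bs^0_0=c^{(n)}_0$, which is the claim.

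For the inductive step, insert the recursion into the right-hand side at level $r$ and collect the coefficient of $c^{(r-1)}_j$. The $v$-term contributes $v\,\Bs^{n-r}_j$, provided $j\le n-r$. The $u$-term contributes $u\,\Bs^{n-r}_{j-1}$, provided $j\ge1$. So the step reduces to the standard degree-reduction identity
\[
\Bs^{\,m+1}_j=v\,\Bs^{\,m}_j+u\,\Bs^{\,m}_{j-1},\qquad m=n-r,
\]
with the conventions $\Bs^m_{-1}=\Bs^m_{m+1}=0$. This identity follows from Pascal's rule $\binom{m+1}{j}=\binom mj+\binom m{j-1}$ after factoring out $u^j v^{m+1-j}$.

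The main bookkeeping obstacle is keeping the index ranges straight at the boundary cases $j=0$ and $j=n-r+1$. The vanishing conventions above handle both.

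\textbf{Convexity.} For $\xi\in[\xi_0,\xi_1]$ we have $u,v\in[0,1]$ with $u+v=1$, so each de Casteljau step is a convex combination of two neighbouring coefficients. In \eqref{eq:elevation} the weights are $\frac{n+1-k}{n+1}$ and $\frac{k+1}{n+1}$. Both are nonnegative, but they do not sum to one. Read on the coefficient side, however, elevation gives
\[
c^{(n+1)}_j=\frac{j}{n+1}\,c_{j-1}+\frac{n+1-j}{n+1}\,c_j,
\]
whose weights are nonnegative and sum to one. I would state convexity in this dual form, which follows by substituting \eqref{eq:elevation} into $\sum_k c_k\Bs^n_k$ and reindexing.
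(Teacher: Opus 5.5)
Your proposal is correct and follows the paper's route of reducing to the classical Bernstein identities in $u$: \eqref{eq:elevation} comes from multiplying by $u+v=1$ and applying the two binomial identities, and the de Casteljau invariant comes from the Pascal-rule identity $\Bs^{m+1}_j=v\,\Bs^m_j+u\,\Bs^m_{j-1}$. So $u+v=1$ is really needed only for convexity, a point the paper's one-line proof blurs. Your convexity remark is also right and sharper than the statement: the weights in \eqref{eq:elevation} sum to $\frac{n+2}{n+1}$, so the convex-combination claim holds only on the control-point side, $\frac{j}{n+1}c_{j-1}+\frac{n+1-j}{n+1}c_j$, and you should denote these coefficients by something other than $c^{(n+1)}_j$ so they do not clash with the de Casteljau iterates.
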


\begin{proof}
These are the classical identities for \eqref{eq:classicalBern} in the variable
$u=u(\xi)$; \eqref{eq:elevation} follows from 
\[
\binom nk=\frac{n+1-k}{n+1}\binom{n+1}{k}, \qquad
\binom nk=\frac{k+1}{n+1}\binom{n+1}{k+1}, 
\]
and the de Casteljau step from $u+v=1$.
\end{proof}

{
\begin{proposition}[failure of the first-order ladder in degree three]\label{prop:ladder}
On the $q$-quadratic lattice with $\xi=x=\cos\theta$, consider
\begin{equation}\label{eq:ladder}
\sigma_{n,k}(x)\,\Dx \Bs^n_k(x)=\tau_{n,k}(x)\,\Sx\Bs^n_k(x),
\qquad \deg\sigma_{n,k}\le2,\quad \deg\tau_{n,k}\le1.
\end{equation}
For $n\le2$ and every $0\le k\le n$, equation~\eqref{eq:ladder} has a nonzero solution. For
$n=3$ and every $0\le k\le3$, equation~\eqref{eq:ladder} admits only the trivial solution
$\sigma_{3,k}=\tau_{3,k}=0$, for every $q\in(0,1)$, without exception. As $q\uparrow1$ the
affine Bernstein ladder on $[-1,1]$,
\[
(1-x^{2})\,\frac{d}{dx}\Bs^n_k(x)=(2k-n-nx)\,\Bs^n_k(x),
\]
is recovered for every $n$; under the change of variable $t=(1+x)/2$ it is equivalent to
the classical identity $t(1-t)\,\frac d{dt}b^n_k(t)=(k-nt)\,b^n_k(t)$ on $[0,1]$.
\end{proposition}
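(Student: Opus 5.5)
Everything reduces to explicit formulas for $\Dx$ and $\Sx$ on powers of $x$, and then to linear algebra on coefficients. On the Askey--Wilson lattice, $x(s\pm\tfrac12)=\tfrac12(q^{\pm1/2}z+q^{\mp1/2}z^{-1})$ with $z=q^{s}$. A direct computation gives
\[
\Dx x=1,\qquad \Sx x=\tfrac{q^{1/2}+q^{-1/2}}{2}\,x=:\alpha x,\qquad U(s)=(q^{1/2}-q^{-1/2})^{2}(x^{2}-1).
\]
Applying the product rules \eqref{eq:leibniz} inductively to $x^m$ then yields explicit expressions for $\Dx x^m$ and $\Sx x^m$. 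These are polynomials in $x$ of degree $m-1$ and $m$, with $q$-dependent coefficients of Chebyshev type. They are easiest to obtain through $z^m+z^{-m}$, writing $x^m$ in terms of $T_j(x)$ and using
\[
\Dx T_j=\frac{q^{j/2}-q^{-j/2}}{q^{1/2}-q^{-1/2}}\,U_{j-1}(x),\qquad \Sx T_j=\tfrac12\,(q^{j/2}+q^{-j/2})\,T_j(x).
\]
With $\xi_0=-1$ and $\xi_1=1$, we have $\Bs^n_k=\binom nk 2^{-n}(1+x)^k(1-x)^{n-k}$. Expanding this in $T_j$ gives $\Dx\Bs^n_k$ and $\Sx\Bs^n_k$ explicitly as polynomials of degrees $n-1$ and $n$.

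For the existence part ($n\le2$), I would count unknowns against equations. The unknowns $\sigma$ and $\tau$ contribute $3+2=5$ coefficients. The equation \eqref{eq:ladder} equates two polynomials of degree at most $n+1$, which gives $n+2$ linear equations. For $n\le2$ this means at most $4<5$ equations, so a nonzero solution always exists. I would also exhibit solutions. For $n=0$, take $\tau=0$ and $\sigma$ arbitrary. For $n=1$, the condition reads $\sigma\cdot(\text{const})=\tau\cdot(\text{linear})$, so $\sigma=\Sx\Bs^1_k$ and $\tau=\Dx\Bs^1_k$ works. For $n=2$, the cross choice $\sigma=\Sx\Bs^2_k$ (degree $2$) and $\tau=\Dx\Bs^2_k$ (degree $1$) again satisfies the degree bounds. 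I would also check that this cross choice is a nonzero solution for $n\le2$, which it is, since $\Sx\Bs^n_k\neq0$. This reduces the existence claim to a remark.

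For $n=3$ there are $5$ unknowns and $5$ equations (coefficients of $x^0,\dots,x^4$). The claim is therefore that the $5\times5$ matrix $M_k(q)$ is nonsingular for $q\in(0,1)$. I would assemble $M_k(q)$ from $\Dx\Bs^3_k$ (degree $2$) and $\Sx\Bs^3_k$ (degree $3$), using the $T_j$ formulas above. By the symmetry (P5) together with $x\mapsto-x$, which commutes with $\Dx$ up to sign and with $\Sx$, the cases reduce to $k=0$ and $k=1$. It is convenient to put $p=q^{1/2}+q^{-1/2}$, so that $p>2$ for $q\in(0,1)$. I would then compute $\det M_k$ symbolically as a rational function of $q$, clear denominators, and factor it. The expected shape is a power of $(1-q)$ (vanishing exactly at $q=1$, where the classical ladder exists) times factors such as $(1+q)$, $(1+q+q^2)$ and powers of $q$. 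All of these are visibly nonzero on $(0,1)$, and this nonvanishing is the claimed ``without exception''. The main obstacle is this determinant factorisation: some factor may fail to be manifestly sign-definite. I would first try to show positivity in the variable $p>2$, and otherwise fall back on a Sturm-sequence argument on $(0,1)$.

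For the limit as $q\uparrow1$, I would use $\Dx\to d/dx$ and $\Sx\to\mathrm{Id}$ (the convergence is coefficientwise on $\Pi_n$, by the formulas above). Directly differentiating,
\[
(1-x^{2})\,\frac{d}{dx}\Bs^n_k=\bigl(k(1-x)-(n-k)(1+x)\bigr)\Bs^n_k=(2k-n-nx)\,\Bs^n_k,
\]
and the substitution $t=(1+x)/2$, with $1-x^2=4t(1-t)$ and $d/dx=\tfrac12\,d/dt$, turns this into $t(1-t)\,\frac{d}{dt}b^n_k(t)=(k-nt)\,b^n_k(t)$. This limit ladder satisfies the degree bounds, which is consistent with the determinant vanishing at $q=1$.
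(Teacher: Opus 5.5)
Your route is the paper's route: coefficient comparison gives a homogeneous system in five unknowns; for $n\le2$ a dimension count (plus, in your version, the explicit cross solution $\sigma=\Sx\Bs^n_k$, $\tau=\Dx\Bs^n_k$) settles existence; for $n=3$ you get a square $5\times5$ system, reduced to $k=0,1$ by reflection; and the limit uses $\Dx\to d/dx$, $\Sx\to\mathrm{Id}$. The gap is at the one step that carries the degree-three claim. You never show that $\det M_{3,k}\neq0$ on $(0,1)$. You only predict its shape, concede that some factor might not be sign-definite, and name Sturm sequences as a fallback. As written, the proposal proves the $n\le2$ and limit parts but not the main assertion. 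The paper closes this step by writing $\Dx x^j,\Sx x^j$ ($j\le3$) in terms of $\widehat\mu=\tfrac12(q^{1/2}+q^{-1/2})$, assembling the matrices, and evaluating $\det M_{3,0}=(\widehat\mu^2-1)^6/512$ and $\det M_{3,1}=-243\,\widehat\mu^8(\widehat\mu^2-1)^2/512$. Since $\widehat\mu^2-1=(1-q)^2/(4q)$, these equal $(1-q)^{12}/(2^{21}q^6)$ and $-243(1+q)^8(1-q)^4/(2^{21}q^6)$. Your guessed shape is therefore right, but it has to be computed, not expected.

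You can close the gap without expanding any determinant. Put $D=\Dx\Bs^n_k$ and $S=\Sx\Bs^n_k$. Then $\deg S=n$ exactly, because the leading coefficient of $\Sx x^n$ is $\tfrac12(q^{n/2}+q^{-n/2})$. If $D$ and $S$ are coprime, then $\sigma D=\tau S$ forces $S\mid\sigma$, hence $\sigma=0$ once $n\ge3$, and then $\tau=0$. Your matrix is just the Sylvester matrix of $D$ and $S$. So it suffices to rule out a common zero $x^*=\tfrac12(z+z^{-1})$.

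If $z\neq\pm1$, the defining formulas of $\Dx,\Sx$ force $\Bs^n_k$ to vanish at both points $x_\pm=\tfrac12(q^{\pm1/2}z+q^{\mp1/2}z^{-1})$. These points are distinct, since $x_+-x_-=\tfrac12(q^{1/2}-q^{-1/2})(z-z^{-1})\neq0$. The only zeros of $\Bs^n_k$ are $\pm1$, so $\{x_+,x_-\}=\{1,-1\}$. Then $x_++x_-=2\widehat\mu x^*=0$ gives $z=\pm i$, and so $x_+-x_-=\pm i(q^{1/2}-q^{-1/2})$. This is not real, so it cannot equal $\pm2$, a contradiction.

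If $z=\pm1$, then $\Sx\Bs^n_k(\pm1)=\Bs^n_k(\pm\widehat\mu)\neq0$ because $\widehat\mu>1$.

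This argument proves the $n=3$ statement for every $q\in(0,1)$. Nothing in it is specific to $n=3$, so it would also settle Conjecture~\ref{conj:ladder}.
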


\begin{proof}
Both sides of \eqref{eq:ladder} are polynomials of degree at most $n+1$, so comparison of
coefficients yields a homogeneous linear system in the five unknowns
$s_0,s_1,s_2,t_0,t_1$, where $\sigma_{n,k}=s_0+s_1x+s_2x^{2}$ and
$\tau_{n,k}=t_0+t_1x$. For $n\le2$ there are at most four scalar equations, and the kernel is
nontrivial by dimension count.

Let $n=3$. The system is square, of size five, with coefficient matrix $M_{3,k}$. Write
$\widehat\mu:=\tfrac12\big(q^{1/2}+q^{-1/2}\big)$, so that $\widehat\mu>1$ for
$q\in(0,1)$ and $\widehat\mu=1$ exactly at $q=1$. The half-step action of the operators on
the monomials, computed as in \S\ref{sec:lattices}, gives
$\Dx1=0$, $\Dx x=1$, $\Dx x^{2}=2\widehat\mu\,x$,
$\Dx x^{3}=(4\widehat\mu^{2}-1)x^{2}+(1-\widehat\mu^{2})$, together with
$\Sx1=1$, $\Sx x=\widehat\mu\,x$, $\Sx x^{2}=(2\widehat\mu^{2}-1)x^{2}+(1-\widehat\mu^{2})$
and $\Sx x^{3}=\widehat\mu(4\widehat\mu^{2}-3)x^{3}+3\widehat\mu(1-\widehat\mu^{2})x$.
Applying these to $\Bs^{3}_k=\binom3k u^{k}v^{3-k}$, $u=\tfrac{1+x}2$, $v=\tfrac{1-x}2$,
one obtains, for $k=0$ and $k=1$,
\begin{align*}
8\,\Dx\Bs^3_0&=\big(\widehat\mu^{2}-4\big)+6\widehat\mu\,x-\big(4\widehat\mu^{2}-1\big)x^{2},\\
8\,\Sx\Bs^3_0&=\big(4-3\widehat\mu^{2}\big)+3\widehat\mu\big(\widehat\mu^{2}-2\big)x
+3\big(2\widehat\mu^{2}-1\big)x^{2}-\widehat\mu\big(4\widehat\mu^{2}-3\big)x^{3},\\
\tfrac83\,\Dx\Bs^3_1&=-\widehat\mu^{2}-2\widehat\mu\,x+\big(4\widehat\mu^{2}-1\big)x^{2},\\
\tfrac83\,\Sx\Bs^3_1&=\widehat\mu^{2}-\widehat\mu\big(3\widehat\mu^{2}-2\big)x
-\big(2\widehat\mu^{2}-1\big)x^{2}+\widehat\mu\big(4\widehat\mu^{2}-3\big)x^{3},
\end{align*}
while the cases $k=2,3$ follow from the reflection symmetry
$\Bs^3_{3-k}(x)=\Bs^3_k(-x)$, under which $\Dx\Bs^3_{3-k}(x)=-\big(\Dx\Bs^3_k\big)(-x)$ and
$\Sx\Bs^3_{3-k}(x)=\big(\Sx\Bs^3_k\big)(-x)$. At $\widehat\mu=1$ these displays reduce to
$-\tfrac38(1-x)^{2}$, $\tfrac18(1-x)^{3}$,
$\tfrac38(3x+1)(x-1)$ and $\tfrac38(1+x)(1-x)^{2}$, the classical derivatives and values, in agreement with the
classical differentiation and averaging formulas. The matrix $M_{3,k}$ is now completely
determined: its rows are indexed by the coefficients of $1,x,x^{2},x^{3},x^{4}$, and its
columns are ordered as
\[
\Dx\Bs^3_k,\qquad x\,\Dx\Bs^3_k,\qquad x^{2}\,\Dx\Bs^3_k,\qquad
-\Sx\Bs^3_k,\qquad -x\,\Sx\Bs^3_k,
\]
corresponding to the unknowns $s_0,s_1,s_2$ and $t_0,t_1$ respectively. For instance,
\[
8\,M_{3,0}=
\begin{pmatrix}
\widehat\mu^{2}-4 & 0 & 0 & 3\widehat\mu^{2}-4 & 0\\[2pt]
6\widehat\mu & \widehat\mu^{2}-4 & 0 & 6\widehat\mu-3\widehat\mu^{3} & 3\widehat\mu^{2}-4\\[2pt]
1-4\widehat\mu^{2} & 6\widehat\mu & \widehat\mu^{2}-4 & 3-6\widehat\mu^{2} & 6\widehat\mu-3\widehat\mu^{3}\\[2pt]
0 & 1-4\widehat\mu^{2} & 6\widehat\mu & 4\widehat\mu^{3}-3\widehat\mu & 3-6\widehat\mu^{2}\\[2pt]
0 & 0 & 1-4\widehat\mu^{2} & 0 & 4\widehat\mu^{3}-3\widehat\mu
\end{pmatrix}.
\]
Expanding the $5\times5$ determinants, a finite polynomial computation in $\widehat\mu$, one
finds the closed factorisations
\begin{equation}\label{eq:ladderdet}
\begin{gathered}
\det M_{3,0}=\frac{\big(\widehat\mu^{2}-1\big)^{6}}{512},\qquad
\det M_{3,1}=-\frac{243\,\widehat\mu^{8}\big(\widehat\mu^{2}-1\big)^{2}}{512},\\
\det M_{3,2}=-\det M_{3,1},\qquad
\det M_{3,3}=-\det M_{3,0}.
\end{gathered}
\end{equation}
The factor $243=3^{5}$ in $\det M_{3,1}$ comes from the common factor $3$ in each
of the five columns associated with $\Bs^3_1$; the displayed formulas use
$\tfrac83\Dx\Bs^3_1$ and $\tfrac83\Sx\Bs^3_1$, whereas $M_{3,1}$ is formed from the
unscaled basis element.
Since $\widehat\mu>1$ for every $q\in(0,1)$, all four determinants are nonzero there, so the
kernel is trivial and only $\sigma_{3,k}=\tau_{3,k}=0$ solves \eqref{eq:ladder}. The
determinants vanish precisely at $\widehat\mu=1$, that is at $q=1$, which is the point
where the affine ladder in the spectral variable is restored; the limit statement follows
from $\Dx\to\frac d{dx}$, $\Sx\to\mathrm{Id}$, under which \eqref{eq:ladder} becomes
$(1-x^{2})\,\frac{d}{dx}\Bs^n_k=(2k-n-nx)\,\Bs^n_k$ on $[-1,1]$, equivalent to the
classical identity on $[0,1]$ after the affine substitution $t=(1+x)/2$.
\end{proof}

\begin{conjecture}\label{conj:ladder}
For every $n\ge4$, every $0\le k\le n$ and every $q\in(0,1)$, equation~\eqref{eq:ladder}
admits only the trivial solution. The degree-three factorisation \eqref{eq:ladderdet}
motivates the conjecture, although no general determinant or rank formula is presently
available for $n\ge4$.
\end{conjecture}
}

{
\begin{proposition}[structural failure of the weight-shift mechanism]\label{prop:spectralnotOP}
Expand the Askey--Wilson polynomials in the affine spectral basis~\eqref{eq:bern}. For $q\ne1$, the affine factors do not satisfy an Askey--Wilson parameter-shift identity analogous to~\eqref{eq:wshift}. Consequently, the proof mechanism of Theorem~\ref{thm:explicit}, based on a shifted weight and the tridiagonal action on genuine generalized powers, does not apply to the affine basis.
\end{proposition}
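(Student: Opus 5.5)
We have to guess what the weight-shift identity \eqref{eq:wshift} says. Presumably it is the standard fact that the Askey--Wilson weight satisfies
\[
\varphi_k(x;a)\,\varphi_{m}(x;b)\,w(x;a,b,c,d)=w(x;aq^{k},bq^{m},c,d).
\]
Equivalently, the infinite products $(a\eit,ae^{-i\theta};q)_\infty$ in the denominator of $w$ absorb the finite product $\varphi_k(x;a)$ exactly, so that $\varphi_k(x;a)/(a\eit,ae^{-i\theta};q)_\infty=1/(aq^k\eit,aq^ke^{-i\theta};q)_\infty$. The plan is to show that the affine factors $u(x)^k v(x)^{n-k}$ admit no such identity. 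That is, there are no parameters $a',b',c',d'$ and no constant $C$ (possibly depending on $k,n$) with
\[
u^kv^{n-k}\,w(x;a,b,c,d)=C\,w(x;a',b',c',d')
\]
for all $x\in(-1,1)$, when $q\ne1$ and $1\le k\le n$.

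The argument is analytic, via zero sets. Write $w(x;a,b,c,d)=h(x)\big/\prod_{\alpha\in\{a,b,c,d\}}(\alpha\eit,\alpha e^{-i\theta};q)_\infty$, where $h(x)=(e^{2i\theta},e^{-2i\theta};q)_\infty/\sqrt{1-x^2}$ does not depend on the parameters. If the identity held, dividing by $h$ would give an identity between functions of $z=\eit$ in the annulus $q<|z|<q^{-1}$. Both sides then extend meromorphically to $z\in\Cx^\ast$:
\[
\Big(\tfrac{1+x}{2}\Big)^k\Big(\tfrac{1-x}{2}\Big)^{n-k}
=C\prod_{\alpha}\frac{(\alpha z,\alpha/z;q)_\infty}{(\alpha' z,\alpha'/z;q)_\infty},\qquad x=\tfrac12(z+z^{-1}).
\]
The left-hand side is a Laurent polynomial in $z$. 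Its only zeros are at $z=-1$, of order $2k$ (since $1+x=(1+z)^2/(2z)$), and at $z=1$, of order $2(n-k)$.

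Next we show that the right-hand side must be a finite product of Askey--Wilson monomials. A ratio of the form $(\alpha z,\alpha/z)_\infty/(\alpha'z,\alpha'/z)_\infty$ is a Laurent polynomial only when $\alpha'=\alpha q^{j}$ with $j\ge0$, giving $\varphi_j(x;\alpha)$. More precisely, compare the divisors. The right-hand side has zeros and poles on the $q$-spirals $\{\alpha^{-1}q^{-r}\}$ and $\{\alpha q^{r}\}$, and these must cancel up to finitely many points, since a Laurent polynomial has no poles on $\Cx^\ast$. It follows that the multiset $\{a',b',c',d'\}$ equals $\{aq^{j_1},bq^{j_2},cq^{j_3},dq^{j_4}\}$ up to permutation, with $j_i\in\Int$. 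Integrality of the left-hand side forces all $j_i\ge0$. Hence the right-hand side is $C\prod_i\varphi_{j_i}(x;\alpha_i)$.

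The contradiction then comes from counting zeros. Every zero of $\varphi_j(x;\alpha)$ in the $z$-plane is $z=\alpha^{-1}q^{-r}$ or $z=\alpha q^r$. A double zero in $z$ at $z=\pm1$ therefore requires $\alpha q^r=\pm1$. Moreover, each factor $1-2\alpha q^rx+\alpha^2q^{2r}$ contributes a simple zero in $x$ at $x_0(\alpha q^r)$. Matching the left-hand side, which vanishes to order $k$ at $x=-1$ and order $n-k$ at $x=1$, would require $k$ distinct $r$ with $\alpha q^r=-1$ within a single spiral. Only the four spirals $a,b,c,d$ are available, and each hits $-1$ at most once because $q\ne1$. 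So for $k\ge2$ (or $n-k\ge2$) the multiplicities are impossible, and even in the low cases the parameters would be forced onto the degenerate values $\alpha=\pm q^{-r}$, where $w$ is not a weight.

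Cleanly stated: $u^k$ is a $k$-fold repeated factor $(1+x)^k$, whereas a product of Askey--Wilson monomials is $q$-spread, with roots $x_0(\alpha q^r)$ pairwise distinct within each spiral. At $q=1$ the spread collapses, which is consistent with the exception $q\ne1$.

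The main obstacle is the second step: making rigorous that infinite-product ratios can only differ by $q$-integer shifts. I would do this by comparing divisors of entire functions of order zero on $\Cx^\ast$ and invoking the uniqueness of Weierstrass-type factorization for $q$-products. Finally, I would remark that because the proof of Theorem~\ref{thm:explicit} uses \eqref{eq:wshift} to convert integrals of the basis against $p_m$ into integrals against shifted weights, its absence blocks that mechanism. This gives the "consequently" part of the statement.
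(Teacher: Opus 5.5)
Your core idea is the paper's: $u^k=4^{-k}\varphi_1(x;-1)^k$ has a repeated root at $x=-1$, whereas a parameter shift produces the $q$-spread roots $x_0(\alpha q^r)$. The gap is that the claim you formalise, namely no $a',b',c',d',C$ at all for every $1\le k\le n$, is false without a restriction on $a,b,c,d$, and your last step fails exactly at those parameters.

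\textbf{Multiplicities.} The $k$ roots at $x=-1$ need not lie in one spiral; each of the four spirals can supply one. So ``for $k\ge2$ the multiplicities are impossible'' does not follow. Lemma~\ref{lem:wshift} with $a=b=-1$ gives $u^{2}\,w(x;-1,-1,c,d)=\tfrac1{16}\,w(x;-q,-q,c,d)$ as meromorphic functions, which is exactly the sense in which \eqref{eq:wshift} is stated. In the integral setting this case is excluded only because $w(x;-1,-1,c,d)$ behaves like $(\pi-\theta)^{-2}$ at $\theta=\pi$, and you do not give that argument.

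\textbf{The low cases.} In the case $j_i\ge0$ to which you reduce, all roots must be $\pm1$, and a spiral contains at most one of $\pm1$. Hence the forced values are exactly $\alpha_i=\pm1$, not merely ``degenerate''. With one parameter equal to $-1$ and/or one equal to $1$ (the others of modulus $<1$), $w$ is a perfectly good integrable weight; for instance $w(x;1,-1,q^{1/2},-q^{1/2})\equiv1$. Consequently $u\,w(x;-1,b,c,d)=\tfrac14w(x;-q,b,c,d)$ and $uv\,w(x;-1,1,c,d)=\tfrac1{16}w(x;-q,q,c,d)$ are honest shift identities.

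This is why the paper writes $u=\tfrac14\varphi_1(x;-1)$, $v=\tfrac14\varphi_1(x;1)$, argues only for $k\ge2$, and compares $\varphi_1(x;a)^k$ with a single-anchor shift $\varphi_k(x;a)$. In that reading the comparison holds for every $q\ne1$, since the roots $x_0(\alpha q^j)$, $0\le j<k$, cannot all equal $-1$.

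\textbf{A smaller flaw.} ``Integrality forces $j_i\ge0$'' is false when $\alpha\beta\in q^{\Int}$ for two parameters, because $\varphi_1(x;\beta)=\beta^{2}\varphi_1(x;\beta^{-1})$. For example, $a=b=q^{1/2}$, $a'=q^{-1/2}$, $b'=q^{5/2}$ gives the polynomial $q\,\varphi_1(x;q^{3/2})$.

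\textbf{Repair.} Impose the standing hypothesis $|a|,|b|,|c|,|d|<1$ of \S\ref{sec:wshift}; it suffices that $\pm1\notin\alpha q^{\Int_{\ge0}}$ for each parameter $\alpha$. Then the divisor matching you flag as the main obstacle is unnecessary. Clearing denominators, the putative identity reads $u^kv^{n-k}\prod_{\alpha'}(\alpha'z,\alpha'/z;q)_\infty=C\prod_{\alpha}(\alpha z,\alpha/z;q)_\infty$ on $\Cx\setminus\{0\}$. At $z=-1$ (if $k\ge1$) or $z=1$ (if $k=0$) the left side vanishes. The right side equals $C\prod_\alpha(\mp\alpha;q)_\infty^{2}$, which is nonzero unless $C=0$, and $C=0$ is absurd. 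This settles nonexistence for all target parameters and all $n\ge1$ at once. Your closing remark on the ``consequently'' part is fine.
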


\begin{proof}
On $[-1,1]$ the barycentric coordinates are
\begin{equation}\label{eq:uv-phi1}
u(x)=\frac{1+x}{2}=\frac14\varphi_1(x;-1),
\qquad
v(x)=\frac{1-x}{2}=\frac14\varphi_1(x;1).
\end{equation}
Thus
\[
\Bs_k^n(x)=4^{-n}\binom nk\varphi_1(x;-1)^k\varphi_1(x;1)^{n-k}.
\]
The generalized-power basis instead contains the genuine products $\varphi_k(x;a)\varphi_{n-k}(x;b)$. For $q\ne1$ and $k\ge2$, the polynomial $\varphi_1(x;a)^k$ has a zero of multiplicity $k$ at $x_0(a)=\tfrac12(a+a^{-1})$, whereas $\varphi_k(x;a)$ has simple zeros at the distinct points $x_0(aq^j)$, $0\le j<k$, for generic parameters. Multiplication of the Askey--Wilson weight by $\varphi_k(x;a)$ shifts the parameter $a$ to $aq^k$, because the finite product cancels the first $k$ factors of the corresponding infinite product. Multiplication by $\varphi_1(x;a)^k$ cannot produce that shift, since its zero divisor has a different multiplicity pattern. Hence no identity of the form~\eqref{eq:wshift} is available for the affine factors.
\end{proof}

\begin{conjecture}\label{conj:spectralnotOP}
For generic Askey--Wilson parameters and every $n\ge2$, the affine connection coefficients do not have the form
\[
A_k^{\mathrm{sp}}(n,m)=\kappa_kP_m(\xi_k),
\]
with pairwise distinct nodes $\xi_k$, nonzero scalars $\kappa_k$, and a polynomial family $P_m$ of exact degree $m$ orthogonal with respect to a quasi-definite functional supported on those nodes.
\end{conjecture}
}

\subsection{The generalized-power Bernstein basis}\label{sec:genbasis}
Fix two anchors $a,b$ (in the Askey--Wilson case, two of the target parameters) and set, with
$\varphi_k$ the Askey--Wilson monomials \eqref{eq:phi},
\begin{equation}\label{eq:genbern}
\widetilde\Bs^{\,n}_k(x;a,b):=\binom nk\,\varphi_k(x;a)\,\varphi_{n-k}(x;b),\qquad 0\le k\le n .
\end{equation}
Each $\widetilde\Bs^{\,n}_k$ has degree exactly $n$. By Proposition~\ref{prop:obstruction},
under its endpoint hypothesis, the binomially normalised family is \emph{not} a partition
of unity, and when the basis assumptions and the positive parameter conditions of
Proposition~\ref{prop:obstruction} are both satisfied, its unique unity normalisation is
sign-changing;
this is the price for the orthogonality of its connection coefficients established below. On the quadratic lattice one uses the Wilson
generalized powers $\rho_k(x;a)$ in place of $\varphi_k$. That the family is a basis is not
automatic and we record it with an explicit determinant.

\begin{lemma}[basis]\label{lem:basis}
Let $x_j:=x_0(aq^{j})=\tfrac12(aq^{j}+a^{-1}q^{-j})$, $j=0,\dots,n$. The evaluation matrix
$E=\big(\widetilde\Bs^{\,n}_k(x_j)\big)_{j,k=0}^{n}$ is lower triangular, with
\begin{equation}\label{eq:basisdet}
\det E=\prod_{j=0}^{n}\binom nj\,\ph{a^{2}q^{j}}{j}\,\ph{q^{-j}}{j}\,
\ph{abq^{j}}{n-j}\,\ph{\tfrac ba q^{-j}}{n-j}.
\end{equation}
If $a^{2}\notin\{q^{-t}:1\le t\le2n-1\}$, $ab\notin\{q^{-t}:0\le t\le n-1\}$ and
$a/b\notin\{q^{j}:|j|\le n-1\}$ (conditions contained in hypothesis \textup{(H1)}, whose
ranges are the exact ones for the present degree $n$), then $\det E\ne0$ and
$\{\widetilde\Bs^{\,n}_k\}_{k=0}^{n}$ is a basis of $\Pi_n$.
\end{lemma}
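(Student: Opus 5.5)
The plan is to evaluate every basis element at the nodes $x_j=x_0(aq^{j})$ using the factorisation already used in the proof of Proposition~\ref{prop:obstruction}. For any parameter $\beta$ and any $\alpha$,
\[
1-\beta(\alpha+\alpha^{-1})+\beta^{2}=(1-\alpha\beta)(1-\alpha^{-1}\beta).
\]
Taking $\alpha=aq^{j}$ and $\beta=aq^{i}$ gives
\[
\varphi_k(x_j;a)=\prod_{i=0}^{k-1}(1-a^{2}q^{i+j})(1-q^{i-j}).
\]
Taking $\alpha=aq^{j}$ and $\beta=bq^{i}$ gives
\[
\varphi_{m}(x_j;b)=\prod_{i=0}^{m-1}(1-abq^{i+j})(1-\tfrac ba q^{i-j}).
\]

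\textbf{Triangularity.} If $k>j$, the factor with $i=j$ in $\varphi_k(x_j;a)$ is $1-q^{0}=0$. Hence $E_{jk}=\widetilde\Bs^{\,n}_k(x_j)=0$ for $k>j$, so $E$ is lower triangular and $\det E$ is the product of its diagonal entries.

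\textbf{Diagonal entries.} For $k=j$ the two products above read
\[
\varphi_j(x_j;a)=\ph{a^{2}q^{j}}{j}\,\ph{q^{-j}}{j},\qquad
\varphi_{n-j}(x_j;b)=\ph{abq^{j}}{n-j}\,\ph{\tfrac ba q^{-j}}{n-j}.
\]
Multiplying by $\binom nj$ gives exactly the $j$-th factor of \eqref{eq:basisdet}. This is pure bookkeeping; no summation identity is needed.

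\textbf{Nonvanishing.} I check each Pochhammer symbol for $0\le j\le n$ separately.
\begin{itemize}
\item The symbol $\ph{q^{-j}}{j}$ has factors $1-q^{i-j}$ with $0\le i<j$. These never vanish for $q\in(0,1)$.
\item The symbol $\ph{a^{2}q^{j}}{j}$ has factors $1-a^{2}q^{t}$ with $t=i+j\in[j,2j-1]$. Over $1\le j\le n$ this covers exactly $1\le t\le 2n-1$, which gives the condition on $a^{2}$.
\item The symbol $\ph{abq^{j}}{n-j}$ has factors $1-abq^{t}$ with $t\in[j,n-1]$. Over all $j$ this gives $0\le t\le n-1$.
\item The symbol $\ph{\tfrac ba q^{-j}}{n-j}$ has factors $1-\tfrac ba q^{m}$ with $m=i-j\in[-j,\,n-1-2j]$. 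It is empty for $j=n$; for $0\le j\le n-1$ these $m$ fill $|m|\le n-1$. The resulting condition $b/a\ne q^{-m}$ is the same as $a/b\notin\{q^{m}:|m|\le n-1\}$.
\end{itemize}
The binomial coefficients are positive. So under the three stated conditions $\det E\neq0$.

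\textbf{Conclusion.} Since $E$ is invertible, the $n+1$ polynomials $\widetilde\Bs^{\,n}_k$ are linearly independent. Indeed, any relation $\sum_k c_k\widetilde\Bs^{\,n}_k=0$, evaluated at the nodes, gives $Ec=0$, hence $c=0$. This holds even without separately checking that the $x_j$ are distinct, because invertibility of $E$ already forces it. All the $\widetilde\Bs^{\,n}_k$ lie in $\Pi_n$, which has dimension $n+1$, so they form a basis.

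The only step needing care is the exact index ranges in the nonvanishing step, in particular the range $|m|\le n-1$ coming from $\ph{\tfrac ba q^{-j}}{n-j}$. There is no genuine obstacle.
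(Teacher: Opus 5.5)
Your proof is correct and follows the paper's own argument: you evaluate at the nodes $x_0(aq^{j})$, use the factorisation $(1-\alpha\beta)(1-\alpha^{-1}\beta)$ to obtain lower triangularity and the diagonal entries, read off the exponent ranges of each Pochhammer factor, and finish with the evaluation-plus-dimension count. Your range $1\le t\le 2n-1$ for the factors of $\ph{a^{2}q^{j}}{j}$ is the correct one; it is the range appearing in the statement, whereas the paper's proof writes $2n-2$, which is a slip.
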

\begin{proof}
At $z=aq^{j}$ one has $\varphi_k(x_j;a)=\ph{a^{2}q^{j}}{k}\,\ph{q^{-j}}{k}$, which vanishes
precisely for $k>j$ \textup(the factor $1-q^{k-1-j}$\textup); hence $E$ is lower triangular
and its determinant is the product of the diagonal entries
\[
\widetilde\Bs^{\,n}_j(x_j)=\binom nj\ph{a^{2}q^{j}}{j}\ph{q^{-j}}{j}\varphi_{n-j}(x_j;b)
\]
with 
\[
\varphi_{n-j}(x_j;b)=\ph{abq^{j}}{n-j}\ph{\tfrac ba q^{-j}}{n-j},
\]
the two Pochhammer symbols arising from $bz=abq^{j}$ and from $b/z=\tfrac ba q^{-j}$ respectively,
which gives \eqref{eq:basisdet}. The factors $\ph{q^{-j}}{j}$ never vanish. The factors
$\ph{a^{2}q^{j}}{j}$ involve $a^{2}q^{t}$ with $1\le t\le 2n-2$; the factors
$\ph{abq^{j}}{n-j}$ involve $abq^{t}$ with $0\le t\le n-1$; and the factors
$\ph{\tfrac ba q^{-j}}{n-j}$ involve $\tfrac ba q^{t}$ with $|t|\le n-1$. They are therefore
nonzero exactly under the stated conditions. Since $\dim\Pi_n=n+1$ and the $n+1$ functions
are linearly independent \textup(their evaluations at $x_0,\dots,x_n$ are\textup), they form
a basis.
\end{proof}

\begin{theorem}[dichotomy]\label{thm:dichotomy}
On the linear lattice the generalized-power construction \eqref{eq:genbern} reduces, up to an
affine change of variable and normalisation, to the discrete Bernstein family of
\cite{RAGZ1998}, which is simultaneously a nonnegative partition of unity and a carrier of
the Hahn connection. On the $q$-quadratic lattice with $q\ne1$, the two
properties are carried by different bases: the affine basis \eqref{eq:bern} is a nonnegative
partition of unity, but its factors do not satisfy the weight-shift mechanism of
Proposition~\ref{prop:spectralnotOP}; the generalized-power basis \eqref{eq:genbern} carries
the $q$-Racah connection of Theorem~\ref{thm:explicit}, but its binomial
normalisation is not a partition of unity and, when the basis assumptions and the
positive parameter conditions of Proposition~\ref{prop:obstruction} hold, its unique unity
normalisation is not nonnegative. At the level of the
connection coefficients, the degeneration of Proposition~\ref{prop:limits} sends the
$q$-Racah polynomial factor to a $q$-Hahn polynomial factor; a complete identification of
the simultaneously rescaled basis with a fixed $q$-Bernstein family requires the additional
lattice normalisation described in Remark~\ref{rem:qlinearbasis}, and is not asserted here.
\end{theorem}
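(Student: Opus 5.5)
The theorem is essentially an assembly of results already stated, so I would prove it by collecting them, in three parts.

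\emph{Linear-lattice statement.} I would pass to the limit in the generalized-power basis. On the quadratic lattice, replace $\varphi_k$ by the Wilson powers $\rho_k(x;a)$. Then contract the quadratic lattice to the linear lattice $x(s)=s$, as in \S\ref{sec:lattices}. Concretely, choose the anchors so that the two products become the factors $\prod_{j<k}(\lambda(x)-\lambda(j))$ and $\prod_{j<n-k}(\lambda(N-j)-\lambda(x))$ of \S\ref{sec:obstruction}. After rescaling by the leading coefficients of $\lambda$, and letting the quadratic coefficient tend to zero, these reduce to the falling factorials $x^{[k]}$ and $(N-x)^{[n-k]}$, up to an affine change of variable and a $k$-independent normalisation. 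Hence $\widetilde\Bs^{\,n}_k$ becomes $b^n_k(N,x)$ after dividing by $N^{[n]}$. The two properties on that lattice are then quoted:
\begin{enumerate}[label=\textup{(\alph*)},leftmargin=2.2em,itemsep=1pt]
\item partition of unity, by Chu--Vandermonde \eqref{eq:chuvander}, with nonnegativity on $\{0,\dots,N\}$ immediate from the factorials;
\item the Hahn connection, from \cite{RAGZ1998}.
\end{enumerate}
The only care needed is tracking the normalisation, so that the binomial weights survive the limit.

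\emph{$q$-quadratic statement.} For $q\ne1$ I would split by basis.
\begin{itemize}[leftmargin=1.4em,itemsep=2pt]
\item \textbf{Affine basis.} It is a nonnegative partition of unity by Proposition~\ref{prop:bezier} (P2)--(P3). Its factors fail the weight-shift mechanism by Proposition~\ref{prop:spectralnotOP}.
\item \textbf{Generalized-power basis.} It is a basis under Lemma~\ref{lem:basis}, and it carries the $q$-Racah connection by Theorem~\ref{thm:explicit}, which I take as given. Its binomial normalisation is not a partition of unity by Proposition~\ref{prop:obstruction}(i), using the endpoint values \eqref{eq:Sendpoints} under the hypothesis $\ph{b/a}{n}\ne\ph{a/b}{n}$. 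Under the basis and positivity conditions, Proposition~\ref{prop:obstruction}(ii)--(iii) gives uniqueness of the unity normalisation together with $\pi_{n,0}\pi_{n,n}<0$, so that normalisation is not nonnegative.
\end{itemize}

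\emph{Limit statement.} Sending the $q$-Racah factor to a $q$-Hahn factor is exactly Proposition~\ref{prop:limits}. The disclaimer about identifying the rescaled basis is deliberately not a claim, and I would simply point to Remark~\ref{rem:qlinearbasis}.

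The main obstacle is the first part: making the lattice contraction precise enough that the binomial normalisation and the anchors map onto the discrete Bernstein family exactly, rather than merely up to a $k$-dependent factor. I would first check degree one and degree two by hand, then verify that the ratio of consecutive coefficients in the limit is independent of $k$.
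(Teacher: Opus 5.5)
Your treatment of the $q$-quadratic clauses and of the degeneration clause is exactly the paper's proof. That proof is likewise an assembly of Propositions~\ref{prop:bezier}, \ref{prop:spectralnotOP}, \ref{prop:obstruction} and \ref{prop:limits}, Theorem~\ref{thm:explicit} and Remark~\ref{rem:qlinearbasis}. The paper asserts the linear-lattice reduction in one sentence and leaves the normalisation implicit.

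\textbf{The gap.} Your explicit route for the linear-lattice clause breaks at the step ``choose the anchors so that the two products become $\prod_{j<k}(\lambda(x)-\lambda(j))$ and $\prod_{j<n-k}(\lambda(N-j)-\lambda(x))$''. Both Wilson powers are monic in the polynomial variable $X=x^{2}$. Take any $\lambda=\kappa X+\eta$ with the zeros placed at $\lambda(j)$ and $\lambda(N-j)$ (e.g.\ $\lambda(s)=\eta-\kappa(s+a)^{2}$ and $b=-a-N$). Then $\rho_k(x;a)\rho_{n-k}(x;b)=\kappa^{-n}\prod_{j<k}(\lambda-\lambda(j))\prod_{j<n-k}(\lambda-\lambda(N-j))$, which is $(-1)^{n-k}\kappa^{-n}$ times the product of \S\ref{sec:obstruction}. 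This is the fixed-sign phenomenon of Remark~\ref{rem:whyfail}, and it survives the contraction. Already for $n=1$, $\widetilde\Bs^{\,1}_1-\widetilde\Bs^{\,1}_0=a^{2}-b^{2}$ is constant, so under any common rescaling the two elements can only tend to $C\,s/N$ and $-C\,(N-s)/N$.

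Your contraction therefore produces $(-1)^{n-k}b^n_k(N,s)$ up to a common constant, not $b^n_k(N,s)$. For $n=1$ the binomially normalised sum tends to a multiple of $2s-N$, so the claimed $k$-independent normalisation is false. The check you propose would not detect this: the ratio of consecutive normalisation factors is $-1$, which is itself independent of $k$.

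\textbf{Repairs.} Two are available.

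\emph{Absorb the sign.} The theorem says only ``up to \dots normalisation'', so you may absorb $(-1)^{n-k}$ and the clause holds as worded. You should say so explicitly, since in this degeneration the binomial normalisation itself does not become a partition of unity.

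\emph{Bypass the Wilson level.} For a genuinely $k$-independent factor, contract the Askey--Wilson monomials directly with complex anchors. Take $q=e^{-\epsilon}$, $a=iq^{-T_1}$, $b=-iq^{-T_2}$ with $T_1+T_2=N$, and $x=i\epsilon(T_1-s)$. Each factor of $\varphi_k(x;a)$ then equals $-2\epsilon(s-j)+O(\epsilon^{2})$, and each factor of $\varphi_{n-k}(x;b)$ equals $-2\epsilon(N-j-s)+O(\epsilon^{2})$. Hence $\widetilde\Bs^{\,n}_k/\big((-2\epsilon)^{n}N^{[n]}\big)\to b^n_k(N,s)$ coefficientwise in $s$. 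Consistently, both endpoint values in \eqref{eq:Sendpoints} are then asymptotic to $(-2\epsilon)^{n}N^{[n]}$.
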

\begin{proof}
On the linear lattice $\varphi_k(x;a)$ degenerates to the falling factorial $x^{[k]}$ (up to
an affine change), and \eqref{eq:genbern} becomes the discrete Bernstein basis, which is a
partition of unity by \eqref{eq:chuvander} and carries the Hahn connection by
\cite{RAGZ1998}. The $q$-quadratic statements concerning partition of unity and the
generalized-power connection are Proposition~\ref{prop:obstruction} and
Theorem~\ref{thm:explicit}; the quadratic Wilson/Racah connection is recovered separately
by Proposition~\ref{prop:racahlimit}; the structural failure of the affine weight-shift mechanism is
Proposition~\ref{prop:spectralnotOP}; and the coefficient-level degeneration is
Proposition~\ref{prop:limits}. We note that even on the $q$-linear lattice several geometric
properties of the classical B\'ezier curves are recovered only at $q=1$
\cite{DelgadoPena2020}, so no statement about the full CAGD toolbox is intended at that
level.
\end{proof}

\section{The connection problem and the weight-shift lemma}\label{sec:main}

\subsection{Statement}
Let $p_m$ be the Askey--Wilson polynomials \eqref{eq:AW} (Wilson at $q=1$). Since
$\widetilde\Bs^{\,n}_k\in\Pi_n$ and $\{p_m\}_{m=0}^n$ is a basis of $\Pi_n$, the first
expansion below is always finite and uniquely defined. The inverse expansion is
uniquely defined whenever $\{\widetilde\Bs^{\,n}_k\}_{k=0}^n$ is a basis, in particular
under \textup{(H1)}:
\begin{equation}\label{eq:conn}
\widetilde\Bs^{\,n}_k(x;a,b)=\sum_{m=0}^{n}C_m(n,k)\,p_m(x),\qquad
p_m(x)=\sum_{k=0}^{n}A_k(n,m)\,\widetilde\Bs^{\,n}_k(x;a,b)\quad(0\le m\le n).
\end{equation}

\subsection{The weight-shift lemma}\label{sec:wshift}
Write the Askey--Wilson weight in the trigonometric parametrisation $x=\cos\theta$ as
\begin{equation}\label{eq:awweight}
w(x;a,b,c,d)=\frac{\big(e^{2i\theta},e^{-2i\theta};q\big)_\infty}
{\big(a\eit,ae^{-i\theta},b\eit,be^{-i\theta},c\eit,ce^{-i\theta},d\eit,de^{-i\theta};q\big)_\infty},
\end{equation}
and let $\langle f,g\rangle_{a,b,c,d}=\frac1{2\pi}\int_0^\pi f\,g\,w\,d\theta$ be the
associated bilinear orthogonality functional, an inner product in the real positive-definite
regime, for which $\langle p_m,p_{m'}\rangle=h_m\,\delta_{mm'}$; here and below we
assume $|a|,|b|,|c|,|d|<1$, so that the orthogonality measure is purely absolutely continuous
; outside this regime the Askey--Wilson measure acquires discrete masses
\cite[\S14.1]{KLS2010} and the integral statements must be adjusted accordingly. The following
identity is the engine of this section. We state it first as an identity of meromorphic
functions of $z=\eit$ on $\Cx\setminus\{0\}$, valid without any restriction on the
parameters; under the standing assumption $|a|,|b|,|c|,|d|<1$ both sides are integrable
weights on $[0,\pi]$ and the identity holds between them.

\begin{lemma}[weight shift]\label{lem:wshift}
As meromorphic functions of $z$, for all $0\le k\le n$,
\begin{equation}\label{eq:wshift}
\varphi_k(x;a)\,\varphi_{n-k}(x;b)\,w(x;a,b,c,d)=w\big(x;aq^{k},bq^{n-k},c,d\big).
\end{equation}
\end{lemma}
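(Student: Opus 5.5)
The plan is to reduce everything to the elementary splitting identity for $q$-shifted factorials,
\begin{equation*}
(\alpha;q)_\infty=(\alpha;q)_k\,(\alpha q^{k};q)_\infty ,\qquad k\ge0,
\end{equation*}
valid for every $\alpha\in\Cx$ and $q\in(0,1)$. Both sides are entire in $\alpha$ and the product converges absolutely. I will work throughout in the Laurent variable $z=\eit$, with $x=\tfrac12(z+z^{-1})$. In this variable $\varphi_k(x;a)=(az,a/z;q)_k$ by \eqref{eq:phi}, and the weight \eqref{eq:awweight} is the ratio
\begin{equation*}
w(x;a,b,c,d)=\frac{(z^{2},z^{-2};q)_\infty}{(az,a/z,bz,b/z,cz,c/z,dz,d/z;q)_\infty},
\end{equation*}
a meromorphic function on $\Cx\setminus\{0\}$.

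\textbf{Step 1.} Apply the splitting identity with $\alpha=az$ and with $\alpha=a/z$:
\begin{equation*}
(az,a/z;q)_\infty=(az,a/z;q)_k\,(aq^{k}z,aq^{k}/z;q)_\infty=\varphi_k(x;a)\,(aq^kz,aq^k/z;q)_\infty .
\end{equation*}
In the same way, with $\alpha=bz,b/z$ and $n-k$ in place of $k$,
\begin{equation*}
(bz,b/z;q)_\infty=\varphi_{n-k}(x;b)\,(bq^{n-k}z,bq^{n-k}/z;q)_\infty .
\end{equation*}

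\textbf{Step 2.} Substitute these two factorisations into the denominator of $w(x;a,b,c,d)$. The factors $\varphi_k(x;a)$ and $\varphi_{n-k}(x;b)$ now appear in the denominator. Multiplying by $\varphi_k(x;a)\varphi_{n-k}(x;b)$ cancels them, and what remains is exactly $w(x;aq^k,bq^{n-k},c,d)$, since the $c,d$ factors and the numerator $(z^{\pm2};q)_\infty$ are untouched. This proves \eqref{eq:wshift} as an identity of meromorphic functions wherever the cancelled finite products are nonzero.

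\textbf{Step 3.} Remove that proviso. Each side is a quotient of entire functions in $z$ on $\Cx\setminus\{0\}$, and the two sides agree off the discrete zero set of $\varphi_k(x;a)\varphi_{n-k}(x;b)$. By the identity theorem they coincide as meromorphic functions, with no parameter restriction. The cleaner route is to note that the identity already holds at the level of the products themselves, $\varphi_k(x;a)\cdot\bigl[1/(az,a/z;q)_\infty\bigr]=1/(aq^kz,aq^k/z;q)_\infty$, so nothing needs removing.

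\textbf{Step 4.} Under the standing assumption $|a|,|b|,|c|,|d|<1$, we have $|aq^k|,|bq^{n-k}|<1$ as well. Both sides are therefore the standard integrable Askey--Wilson weights on $[0,\pi]$ in the absolutely continuous regime.

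The only delicate point, the "main obstacle" such as it is, lies in Step 3: being careful about meromorphic equality at zeros or poles, for instance when $aq^j=q^{-m}$. This is handled by working with the multiplicative identity in the form above, before taking reciprocals, so that no division by a possibly vanishing finite product ever occurs.
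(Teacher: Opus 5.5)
Your proof is correct and is essentially the paper's argument: you split $(\alpha;q)_\infty=(\alpha;q)_k(\alpha q^{k};q)_\infty$ at $\alpha=az,a/z$ (and at $\alpha=bz,b/z$ with $n-k$ in place of $k$), then cancel the resulting finite products against $\varphi_k(x;a)\varphi_{n-k}(x;b)$ in the denominator of $w$. Your Step~3 is more careful than necessary but harmless, since every manipulation takes place in the field of meromorphic functions on $\Cx\setminus\{0\}$, where dividing by a holomorphic function that is not identically zero is legitimate.
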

\begin{proof}
From \eqref{eq:phi}, 
\[
\varphi_k(x;a)=(a\eit,ae^{-i\theta};q)_k=\dfrac{(a\eit,ae^{-i\theta};q)_\infty}{(aq^{k}\eit,aq^{k}e^{-i\theta};q)_\infty}
\]
by the telescoping identity $(z;q)_\infty=(z;q)_k\,(zq^{k};q)_\infty$. Substituting into
\eqref{eq:awweight}, the numerator factors $(a\eit,ae^{-i\theta};q)_\infty$ contributed by
$\varphi_k(x;a)$ cancel the corresponding denominator factors of $w$ and leave
$(aq^{k}\eit,aq^{k}e^{-i\theta};q)_\infty$ in their place; likewise $\varphi_{n-k}(x;b)$ sends
$b\mapsto bq^{n-k}$. The factors in $c,d$ and the numerator $(e^{\pm2i\theta};q)_\infty$ are
untouched, giving \eqref{eq:wshift}.
\end{proof}

\begin{corollary}\label{cor:closedform}
Define the shifted moment
\begin{equation}\label{eq:Jmk}
J_m(k):=\frac1{2\pi}\int_0^\pi p_m(x;a,b,c,d)\,w\big(x;aq^{k},bq^{n-k},c,d\big)\,d\theta .
\end{equation}
Then the coefficients \eqref{eq:conn} satisfy
\begin{equation}\label{eq:Cclosed}
\big\langle \widetilde\Bs^{\,n}_k,\,p_m\big\rangle_{a,b,c,d}=\binom nk\,J_m(k),
\qquad
C_m(n,k)=\frac{\binom nk}{h_m}\,J_m(k).
\end{equation}
In particular the B\'ezier index $k$ enters the connection coefficients only through the
shifted parameters $aq^{k}$ \textup(increasing\textup) and $bq^{n-k}$
\textup(decreasing\textup), i.e.\ through the pair $q^{k}$ and $q^{-k}$. This shift pattern anticipates the
$q$-quadratic node identified explicitly in Theorem~\ref{thm:explicit}; the weight-shift
lemma alone does not yet assert polynomial dependence on that node.
\end{corollary}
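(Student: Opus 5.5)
The plan is to pair both sides of the first expansion in \eqref{eq:conn} with $p_m$ under $\langle\cdot,\cdot\rangle_{a,b,c,d}$, use orthogonality of the Askey--Wilson polynomials to isolate $C_m(n,k)$, and then rewrite the left-hand integral through Lemma~\ref{lem:wshift}. Throughout I keep the standing assumption $|a|,|b|,|c|,|d|<1$. This makes $w$ integrable on $[0,\pi]$, and $\langle\cdot,\cdot\rangle$ a genuine orthogonality functional with $h_m\neq0$. The shifted parameters satisfy $|aq^k|,|bq^{n-k}|<1$ as well, so $w(x;aq^{k},bq^{n-k},c,d)$ is again integrable and $J_m(k)$ in \eqref{eq:Jmk} is a finite number.

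\emph{First identity.} Write $\langle \widetilde\Bs^{\,n}_k,p_m\rangle_{a,b,c,d}=\frac{1}{2\pi}\int_0^\pi \binom nk\varphi_k(x;a)\varphi_{n-k}(x;b)\,p_m(x)\,w(x;a,b,c,d)\,d\theta$. On the unit circle $z=\eit$, Lemma~\ref{lem:wshift} is a pointwise identity of functions: both sides are meromorphic, and under $|a|,|b|,|c|,|d|<1$ they have no poles on $|z|=1$. The integrand therefore equals $\binom nk\,p_m(x)\,w(x;aq^{k},bq^{n-k},c,d)$ for every $\theta\in[0,\pi]$. Integrating gives $\binom nk J_m(k)$.

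\emph{Second identity.} The first expansion in \eqref{eq:conn} is a finite sum, so $\langle\cdot,p_m\rangle$ may be applied termwise. This gives $\langle\widetilde\Bs^{\,n}_k,p_m\rangle=\sum_{m'}C_{m'}(n,k)\langle p_{m'},p_m\rangle=C_m(n,k)h_m$. Dividing by $h_m\neq0$ and combining with the first identity yields $C_m(n,k)=\binom nk J_m(k)/h_m$.

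\emph{Concluding remark and main obstacle.} The final sentence of the corollary is read off from \eqref{eq:Jmk}: apart from the factor $\binom nk$, $k$ enters only through the parameters $aq^k$ and $bq^{n-k}=bq^n\cdot q^{-k}$ of the weight. The only point requiring care is justifying the pointwise substitution of the meromorphic identity inside the integral. One must check that the factors $(e^{\pm2i\theta};q)_\infty$ are untouched, and that the parameter restriction excludes poles on the contour, so that no discrete masses need to be added. This is exactly why the statement is placed under the standing assumption. Outside it, I would first try to argue by analytic continuation in the parameters, adding the residue contributions that produce the discrete masses.
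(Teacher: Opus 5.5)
Your proposal is correct and follows the paper's proof. Multiply by $p_m\,w$, apply Lemma~\ref{lem:wshift} pointwise and integrate to get the first identity; then pair the finite expansion with $p_m$ and use orthogonality to obtain $C_m(n,k)\,h_m$. The extra checks you supply are left implicit in the paper under its standing assumption $|a|,|b|,|c|,|d|<1$: no poles of either weight on $|z|=1$, integrability of the shifted weight, and $h_m\neq0$.
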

\begin{proof}
Multiply $\widetilde\Bs^{\,n}_k=\binom nk\varphi_k(x;a)\varphi_{n-k}(x;b)$ by $p_m\,w$ and apply
Lemma~\ref{lem:wshift}; the second identity is $\langle\widetilde\Bs^{\,n}_k,p_m\rangle
=C_m(n,k)\langle p_m,p_m\rangle=C_m(n,k)h_m$.
\end{proof}

\subsection{Explicit identification of the connection coefficients}
To identify the coefficients one constructs their three-term recurrence (the strategy by
which the Hahn--Eberlein polynomials were found on the linear lattice \cite{RAGZ1998}) and
then reads off the parameters. Collect the coefficients into $\mathbf A=(A_k(n,m))_{k,m}$. If
$A_k(n,m)=\kappa_k\,P_m(\xi_k)$ with nodes $\xi_k$, a normalisation $\kappa_k$ and polynomials
$P_m$ of degree $m$, then the columns of $\mathbf A$ are eigenvectors of an
irreducible tridiagonal matrix acting in the index $k$; equivalently, if
$\Lambda=\operatorname{diag}(\lambda_m)$ is the diagonal eigenvalue matrix, then
$\mathbf A\Lambda\mathbf A^{-1}$ is tridiagonal. Solving for the tridiagonal action yields a spectral variable obeying
$\xi_{k+1}-(q+q^{-1})\xi_k+\xi_{k-1}=\mathrm{const}$; hence $\xi_k$ is affine in the
$q$-quadratic node $q^{-k}+\tfrac ab q^{k-n}$. The resulting band entries determine the four
$q$-Racah parameters. The result is the following closed form, for which we first fix the
standing hypotheses.

\medskip
\noindent\textbf{Hypotheses (H).} $0<q<1$ and $a,b,c,d\in\Cx^{\times}$; the algebraic
statements of this section, in particular Theorem~\ref{thm:explicit}, require nothing
further than the nonvanishing conditions below. Reality assumptions enter only later and are
stated where used: the weights of Remark~\ref{rem:orth} are real when the parameters are
real; the integral statements of \S\ref{sec:wshift} assume $|a|,|b|,|c|,|d|<1$; and the
positivity discussion of Proposition~\ref{prop:obstruction}\textup{(iii)} assumes
$0<a,b<1$ with $q<a/b<q^{-1}$, $a\ne b$. The nonvanishing conditions are:
\begin{enumerate}[label=\textup{(H\arabic*)},leftmargin=2.6em,itemsep=1pt]
\item $a/b\notin\{q^{j}:\ -(n+1)\le j\le n+1\}$, \ $ab\notin\{q^{-j}:\ 0\le j\le n-1\}$, \
$a^{2},\,b^{2}\notin\{q^{-j}:\ 1\le j\le 2n-1\}$ \ \textup(the family \eqref{eq:genbern} is a
basis of $\Pi_n$; Lemma~\ref{lem:basis}\textup);
\item $ac,\,ad,\,bc,\,bd\notin\{q^{-j}:\ 0\le j\le n-1\}$ \ \textup(the band coefficients
$\mathfrak b_k,\mathfrak d_k$ of \eqref{eq:Ltridiag} do not vanish in the interior\textup);
\item $abcd\notin\{q^{\,1-j}:\ 1\le j\le 2n-1\}$ \ \textup(the eigenvalues
$\lambda_m=(q^{-m}-1)(1-abcd\,q^{m-1})$, $0\le m\le n$, are pairwise distinct, since
$\lambda_m-\lambda_{m'}=(q^{-m}-q^{-m'})(1-abcd\,q^{m+m'-1})$\textup).
\end{enumerate}
For the integral statements of \S\ref{sec:wshift} we additionally assume $|a|,|b|,|c|,|d|<1$,
so that the Askey--Wilson measure is the absolutely continuous one \eqref{eq:awweight}, without
discrete masses \cite[\S14.1]{KLS2010}. The following elementary lemma ensures that no hidden
nonvanishing conditions arise in the sequel.

\begin{lemma}[control of the denominators]\label{lem:denominators}
Assume \textup{(H1)}--\textup{(H3)}. Then, over the indicated ranges of indices, every
denominator occurring in \eqref{eq:prefactor}, \eqref{eq:dk}, \eqref{eq:bk}, \eqref{eq:rho},
\eqref{eq:Bq} and \eqref{eq:Dq} is nonzero, every numerator factor of \eqref{eq:rho} is
nonzero, and the quantities $B(k)$ \textup(for $0\le k\le n-1$\textup) and $D(k)$
\textup(for $1\le k\le n$\textup) are nonzero.
\end{lemma}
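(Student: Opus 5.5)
The plan is a direct inspection, since the displays \eqref{eq:prefactor}, \eqref{eq:dk}, \eqref{eq:bk}, \eqref{eq:rho}, \eqref{eq:Bq}, \eqref{eq:Dq} are, by construction, finite products and quotients of $q$-shifted factorials in the parameters. The approach is to list every elementary factor that can vanish and show that its vanishing locus is excluded by one of (H1)--(H3). The only zeros are of the form $1-\gamma q^{t}$. The base $\gamma$ ranges over a short list: $q$, $a/b$ or $b/a$, $ab$, $a^2$, $b^2$, $ac$, $ad$, $bc$, $bd$, $abcd$. Factors with $\gamma=q^{-n}$, $q^{-n-1}$ or $q$ itself, as in $\ph{q^{-n-1}}{k}$ or $\ph{q}{k}$, are handled separately: they are nonzero for $t$ in the relevant range because $0<q<1$ and the index never reaches the terminating value. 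This last point needs a check of the precise index range, e.g.\ $k\le n$ versus $k\le n+1$.

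\textbf{Step 1.} For each display, record the range of exponents $t$ that actually occurs as the summation or B\'ezier index runs over its stated range ($0\le k\le n$, $0\le m\le n$, etc.). For example, a factor $\ph{ab}{n-k}$ or $\ph{abq^{k}}{n-k}$ contributes $abq^{t}$ with $0\le t\le n-1$, which (H1) excludes. Factors $\ph{ac}{k}$, $\ph{bd}{n-k}$ and similar contribute $acq^{t}$, $bdq^{t}$ with $0\le t\le n-1$, which (H2) excludes. Factors $1-abcd\,q^{2m-1}$, $1-abcd\,q^{m+m'-1}$, $\ph{abcd\,q^{m-1}}{\cdot}$ give exponents $abcd\,q^{t-1}$ with $1\le t\le 2n-1$, which (H3) excludes.

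\textbf{Step 2.} The $q$-Racah parameters $(ad/q,bc/q,q^{-n-1},a/b)$ produce denominators $\ph{ad}{\cdot}$, $\ph{bc\,q^{-n-1}\cdot a/b}{\cdot}$ and factors like $1-\tfrac ab q^{2k-n}$ from $\mu(k)$-differences. Here I would write the node differences as $\mu(k)-\mu(k')=(q^{-k}-q^{-k'})(1-\tfrac ab q^{k+k'-n})$. The exponent $k+k'-n$ then lies in $[-(n-1),n+1]$, or $[-(n+1),n+1]$ after the normalisations in \eqref{eq:rho}. This is exactly why (H1) takes the wider range $|j|\le n+1$ for $a/b$, rather than the $|j|\le n-1$ of Lemma~\ref{lem:basis}. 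Numerator factors of \eqref{eq:rho} are treated identically, since they have the same shape.

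\textbf{Step 3.} $B(k)$ and $D(k)$ are the $q$-Racah recurrence coefficients evaluated at the above parameters, i.e.\ explicit products. The nonvanishing of $B(k)$ for $0\le k\le n-1$ and $D(k)$ for $1\le k\le n$ then follows factor by factor. The boundary vanishings $B(n)=0$ and $D(0)=0$ come from $1-q^{k-n}$ and $1-q^{k}$ and are exactly excluded by the index ranges.

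The main obstacle is bookkeeping rather than ideas: matching each exponent range precisely to the hypothesis ranges, in particular the extreme values $t=\pm(n+1)$ for $a/b$ and $t=2n-1$ for $abcd$. If some factor falls outside the stated ranges, I would recheck whether it is actually cancelled in the final formula before enlarging the hypotheses.
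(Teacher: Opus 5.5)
Your plan matches the paper's proof: a factor-by-factor inspection in which every relevant factor is, up to a nonzero unit, $1-q^{t}$ with $1\le t\le n$, $1-(a/b)^{\pm1}q^{t}$ with $|t|\le n+1$, or $1-xy\,q^{t}$ with $xy\in\{ab,ac,ad,bc,bd\}$ and $0\le t\le n-1$, so only (H1)--(H2) are needed. When you carry it out, read the $a/b$ exponents directly off \eqref{eq:dk}, \eqref{eq:bk}, \eqref{eq:rho}, \eqref{eq:Bq}, \eqref{eq:Dq} rather than from node differences (the extremes $\tfrac ab q^{-n-1}$ and $\tfrac ab q^{n+1}$ occur at $k=0$ and $k=n$), drop the $a^{2}$, $b^{2}$, $abcd$ bases and (H3), which do not occur in these displays, and handle the two factors of $D(k)$ that are not visibly of this shape via $\beta-\gamma q^{k}=\beta\bigl(1-q^{k-n}/(bc)\bigr)$ and $\alpha-\gamma\delta q^{k}=\alpha\bigl(1-q^{k-n}/(bd)\bigr)$, which are nonzero by (H2) since $0\le n-k\le n-1$ for $1\le k\le n$.
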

\begin{proof}
Each factor in question has one of four shapes: $1-q^{\,t}$ with $1\le t\le n$, never zero
since $0<q<1$; $1-\tfrac ab q^{\,t}$ or $1-\tfrac ba q^{\,t}$ with $|t|\le n+1$, excluded by
\textup{(H1)}; $1-abq^{\,t}$ with $0\le t\le n-1$, excluded by \textup{(H1)}; or
$1-xyq^{\,t}$ with $xy\in\{ac,ad,bc,bd\}$ and $0\le t\le n-1$, excluded by \textup{(H2)}.
The exponent ranges are read off directly: in \eqref{eq:dk} the denominators involve
$\tfrac ab q^{\,2k-n-1}$ and $\tfrac ab q^{\,2k-n}$, whose exponents run through
$[-n-1,\,n]$ as $0\le k\le n$; in \eqref{eq:bk} the exponents of $\tfrac ba$ run through
$[-n-1,\,n]$; in \eqref{eq:rho} through $[-n,\,n+1]$; and in \eqref{eq:Bq}--\eqref{eq:Dq}
the combinations $\gamma\delta q^{\,2k+i}=\tfrac ab q^{\,2k-n-1+i}$, $i=0,1,2$, have
exponents in $[-n-1,\,n+1]$. The numerator factors of $B(k)$ for $k<n$ and of $D(k)$ for
$k>0$ are $1-adq^{\,k}$, $1-acq^{\,k}$, $1-q^{\,k-n}$, $1-\gamma\delta q^{\,k+1}$,
$1-q^{\,k}$, $1-\tfrac ab q^{\,k}$, $\beta-\gamma q^{\,k}$ and
$\alpha-\gamma\delta q^{\,k}$; the first six fall into the families above, while
$\beta-\gamma q^{\,k}=\beta\big(1-\tfrac{q^{\,k-n}}{bc}\big)$ and
$\alpha-\gamma\delta q^{\,k}=\alpha\big(1-\tfrac{q^{\,k-n}}{bd}\big)$, both nonzero by
\textup{(H2)} since $0\le n-k\le n-1$ in the relevant range.
\end{proof}

{
\begin{theorem}[Askey--Wilson/$q$-Racah connection]\label{thm:explicit}
Assume \textup{(H)}. With the $q$-Racah polynomials \eqref{eq:qRacah}, set
\begin{equation}\label{eq:params}
\alpha=\frac{ad}{q},\qquad \beta=\frac{bc}{q},\qquad \gamma=q^{-n-1},\qquad \delta=\frac ab,
\end{equation}
so that $\alpha q=ad$, $\beta\delta q=ac$, $\gamma q=q^{-n}$ \textup(the standard truncation\textup),
$\alpha\beta q^{m+1}=abcd\,q^{m-1}$ and $\mu(k)=q^{-k}+\gamma\delta q^{k+1}=q^{-k}+\tfrac ab q^{k-n}$.
Then the connection coefficients \eqref{eq:conn} are, for $0\le k,m\le n$,
\begin{equation}\label{eq:explicit}
\begin{gathered}
A_k(n,m)=\pi_{n,k}\,\omega_{n,m}\,R_m\!\big(\mu(k);\alpha,\beta,\gamma,\delta\,|\,q\big),\\[2pt]
R_m\!\big(\mu(k);\alpha,\beta,\gamma,\delta\,|\,q\big)=\qhyp43\!\left[\genfrac{}{}{0pt}{}{q^{-m},\ abcd\,q^{m-1},\ q^{-k},\ \tfrac ab q^{k-n}}{ad,\ ac,\ q^{-n}};q,q\right],
\end{gathered}
\end{equation}
with the \emph{fully explicit} prefactors
\begin{align}
\pi_{n,k}&=\frac{q^{\,k}}{\dbinom nk}\,\qbinom nk\,
\frac{1-\tfrac ba q^{\,n-2k}}
{\ph{ab}{n}\,\ph{b/a}{n-k}\,\big(1-\tfrac ba q^{\,n-k}\big)\,\ph{\tfrac ab q}{k}},
\label{eq:prefactor}\\[2pt]
\omega_{n,m}&=a^{-m}\,\ph{ab}{m}\,\ph{ac}{m}\,\ph{ad}{m},\label{eq:omega}
\end{align}
where $\pi_{n,k}=A_k(n,0)$ expands unity, $1=\sum_k\pi_{n,k}\widetilde\Bs^{\,n}_k$
\textup(it depends only on $a,b,q$\textup), and $\omega_{n,m}=A_0(n,m)/A_0(n,0)$ is
independent of $n$. The apparent singularities in \eqref{eq:prefactor} at special parameter values are
interpreted through rational continuation of the complete connection coefficient
whenever the connection matrix remains well defined. In particular
\[
\pi_{n,0}=\frac{1}{\ph{b/a}{n}\,\ph{ab}{n}},\qquad
\pi_{n,n}=\frac{1}{\ph{a/b}{n}\,\ph{ab}{n}} .
\]
\end{theorem}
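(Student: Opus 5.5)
The plan is a bispectral argument. The Askey--Wilson $q$-difference operator has the $p_m$ as eigenfunctions; I will show it acts tridiagonally on the generalized-power basis $\{\widetilde\Bs^{\,n}_k\}$, read off a three-term recurrence in $k$ for the columns of $\mathbf A$, match it with the $q$-Racah difference equation in the variable $k$, and fix the remaining normalisations by evaluating at the endpoint $x_0(a)$.

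\emph{Step 1: tridiagonal action.} Write $f(z)=F(\tfrac12(z+z^{-1}))$ and use the Askey--Wilson operator
\[
\mathcal L f(z)=\mathcal A(z)\big(f(qz)-f(z)\big)+\mathcal A(z^{-1})\big(f(q^{-1}z)-f(z)\big),
\qquad
\mathcal A(z)=\frac{(1-az)(1-bz)(1-cz)(1-dz)}{(1-z^{2})(1-qz^{2})}.
\]
Its eigenvalue relation is $\mathcal L p_m=\lambda_m p_m$ with $\lambda_m=(q^{-m}-1)(1-abcd\,q^{m-1})$, up to the sign convention of \cite[\S14.1]{KLS2010}.

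For $f_k=\varphi_k(x;a)\varphi_{n-k}(x;b)$ the ratios $f_k(qz)/f_k(z)$ and $f_k(q^{-1}z)/f_k(z)$ are explicit rational functions. For instance, $(az;q)_k$ becomes $(aqz;q)_k$, which gives the factor $(1-aq^kz)/(1-az)$, and similarly for the factors in $a/z$, $bz$ and $b/z$. The factors $(1-az)$ and $(1-bz)$ of $\mathcal A(z)$ cancel the corresponding denominators. After cancellation, $\mathcal L f_k/f_k$ becomes a Laurent expression whose only remaining poles, at $z^2=q^{\pm1}$ and $z^2=1$, cancel by the $z\leftrightarrow z^{-1}$ symmetry.

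I will then rewrite the result using
\[
f_{k+1}/f_k=\frac{1-2aq^kx+a^2q^{2k}}{1-2bq^{n-k-1}x+b^2q^{2n-2k-2}},
\]
and the analogous expression for $f_{k-1}/f_k$. The goal is the form
\[
\mathcal L\widetilde\Bs^{\,n}_k=\mathfrak d_k\widetilde\Bs^{\,n}_{k-1}+\mathfrak e_k\widetilde\Bs^{\,n}_k+\mathfrak b_k\widetilde\Bs^{\,n}_{k+1}.
\]
To find the coefficients, I will use that $\mathcal L$ preserves $\Pi_n$ and that $\mathcal L f_k$ is divisible by $\varphi_{k-1}(x;a)\varphi_{n-k-1}(x;b)$. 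The divisibility follows because $f_k(q^{\pm1}z)$ still contains all but one factor of each product. The quotient is therefore a quadratic polynomial in $x$. Expanding it in the three products $\varphi_1(x;aq^{k-1})\varphi_1(x;aq^{k})$, $\varphi_1(x;aq^{k-1})\varphi_1(x;bq^{n-k-1})$ and $\varphi_1(x;bq^{n-k})\varphi_1(x;bq^{n-k-1})$, and evaluating at the three nodes $x_0(aq^{k-1})$, $x_0(bq^{n-k-1})$ and $x_0(aq^k)$, gives $\mathfrak d_k$, $\mathfrak e_k$ and $\mathfrak b_k$ as explicit products. This computation is where I expect the main obstacle: each evaluation must collapse to a clean product of $(1-\cdot)$ factors, and (H1)--(H2) must be exactly what keeps the denominators nonzero and $\mathfrak b_k,\mathfrak d_k$ nonzero, as recorded in Lemma~\ref{lem:denominators}.

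\emph{Step 2: recurrence in $k$.} Apply $\mathcal L$ to $p_m=\sum_kA_k(n,m)\widetilde\Bs^{\,n}_k$ and compare coefficients, using that $\{\widetilde\Bs^{\,n}_k\}$ is a basis by Lemma~\ref{lem:basis}. This gives
\[
\lambda_mA_k=\mathfrak b_{k-1}A_{k-1}+\mathfrak e_kA_k+\mathfrak d_{k+1}A_{k+1},
\qquad A_{-1}=A_{n+1}=0.
\]
Thus each column of $\mathbf A$ is an eigenvector of an irreducible tridiagonal matrix with eigenvalue $\lambda_m$. By (H3) these eigenvalues are simple, so each column is determined up to a scalar.

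Now substitute $A_k=\pi_{n,k}\,\omega_{n,m}\,y_k$ and compare with the $q$-Racah difference equation \cite[(14.2.6)]{KLS2010},
\[
-(1-q^{m})(1-\alpha\beta q^{m+1})\,y(k)=B(k)y(k+1)-\big(B(k)+D(k)\big)y(k)+D(k)y(k-1),
\]
with $\alpha,\beta,\gamma,\delta$ as in \eqref{eq:params}. The eigenvalue matches after the rescaling $q^{m}\lambda_m$ versus $(1-q^m)(1-abcd\,q^{m-1})$, which must be absorbed by a $k$-independent normalisation of $\mathcal L$. The gauge $\pi_{n,k}$ is fixed by the two requirements
\[
\frac{\pi_{n,k+1}}{\pi_{n,k}}\,\mathfrak d_{k+1}=\kappa B(k),
\qquad
\frac{\pi_{n,k-1}}{\pi_{n,k}}\,\mathfrak b_{k-1}=\kappa D(k),
\]
for a single constant $\kappa$. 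These are compatible because their product condition $\mathfrak b_k\mathfrak d_{k+1}=\kappa^2B(k)D(k+1)$ is a checkable identity of explicit products. Telescoping the first requirement from $\pi_{n,0}$ yields \eqref{eq:prefactor}. The identification of the parameters is forced by the factors of $B(k)$ and $D(k)$, namely $1-adq^k$, $1-acq^k$, $1-q^{k-n}$ and the factors in $\tfrac ab q^{k}$.

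\emph{Step 3: normalisation.} At $z=a$ one has $\varphi_k(x_0(a);a)=0$ for $k\ge1$, so $p_m(x_0(a))=A_0(n,m)\,\varphi_n(x_0(a);b)=A_0(n,m)\ph{ab}{n}\ph{b/a}{n}$, using Proposition~\ref{prop:obstruction}. On the other hand, in \eqref{eq:AW} the factor $\ph{a/z}{j}=\ph{1}{j}$ vanishes for $j\ge1$, so $p_m(x_0(a))=a^{-m}\ph{ab,ac,ad}{m}$. Since $R_m(\mu(0))=1$, this gives $\pi_{n,0}=1/(\ph{b/a}{n}\ph{ab}{n})$ and $\omega_{n,m}$ as in \eqref{eq:omega}.

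The case $m=0$ shows that $\pi_{n,k}=A_k(n,0)$ expands unity. Evaluating at $x_0(b)$ in the same way confirms $\pi_{n,n}=1/(\ph{a/b}{n}\ph{ab}{n})$, which serves as a consistency check on the telescoped product.
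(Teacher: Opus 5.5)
Your overall architecture is the paper's: tridiagonal action of $\mathcal L$, a three-term recurrence in $k$, a diagonal gauge onto the $q$-Racah equation, simple spectrum, and evaluation at $x_0(a)$. Computing the band entries by divisibility and evaluation at $x_0(aq^{k-1})$, $x_0(bq^{n-k-1})$ instead of by residues is a legitimate variant. For the unnormalised products it gives $\mathfrak d_k=\mathcal A(aq^{k-1})f_k(aq^{k})/f_{k-1}(aq^{k-1})$, which is the paper's residue quotient. Your compatibility condition $\mathfrak b_k\mathfrak d_{k+1}=B(k)D(k+1)$ is what Proposition~\ref{prop:BD} verifies, with $\kappa=1$.

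\textbf{The gap: the diagonal.} A diagonal gauge $A_k=\pi_{n,k}\omega_{n,m}y_k$ does not change the diagonal entry of the recurrence, which stays $\mathfrak e_k$. Your two requirements on $\pi_{n,k}$ only fix the off-diagonal entries. To land on the $q$-Racah equation you must also prove
\[
\mathfrak e_k=-\big(B(k)+D(k)\big),\qquad 0\le k\le n .
\]
Nothing in your plan yields this.
\begin{itemize}
\item Equal off-diagonal data and equal spectra do not determine the diagonal of a tridiagonal matrix; already for $2\times2$ there are two solutions.
\item The identity is equivalent to the constant vector being the $\lambda_0=0$ eigenvector of the gauged matrix. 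That is, the telescoped gauge must be the expansion of unity, which is the $m=0$ case of the theorem itself. So you cannot borrow it from your Step~3 without circularity.
\item Your evaluation at $x_0(aq^k)$ gives $\mathfrak e_k$ only as a difference of products, so an actual verification is required.
\end{itemize}
The paper devotes Step~4 (Proposition~\ref{prop:diag}) to exactly this. It reads the diagonal off the values at $z\to\infty$, $\mathfrak a_k=H(\infty)-\mathfrak b_kT_+(\infty)-\mathfrak d_kT_-(\infty)$. It then proves the identity as a rational identity in $K=q^k$, by cancelling the residues at the six poles $K^2\in\{\tfrac ba q^{n-1},\tfrac ba q^{n},\tfrac ba q^{n+1}\}$ and checking the limit at infinity. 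Without this, or an independent proof of the unity expansion by a summation formula, the columns cannot be identified with $q$-Racah vectors.

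\textbf{Further points to repair.}
\begin{itemize}
\item Your $q$-Racah equation is misquoted. In \cite[\S14.2]{KLS2010} the left-hand side is $q^{-m}(1-q^{m})(1-\alpha\beta q^{m+1})\,y(k)$, which equals $\lambda_m$ exactly, so $\kappa=1$. Your proposed fix could not work anyway: rescaling $\mathcal L$ multiplies all eigenvalues by one constant, whereas the discrepancy you describe involves the $m$-dependent factor $q^{m}$.
\item In Step~1, $\mathcal L f_k/f_k$ is not pole-free after the cancellations you list. It keeps simple poles at $z^{\pm1}=aq^{k-1}$ and $z^{\pm1}=bq^{n-k-1}$, and these are precisely what generate the off-diagonal terms.
\item Your divisibility claim is true, but ``all but one factor survives'' is not the reason. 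The common factor of $f_k(z)$, $f_k(qz)$, $f_k(z/q)$ is only $\varphi_{k-2}(x;aq)\,\varphi_{n-k-2}(x;bq)$. At $z=a$ the term $f_k(z/q)$ does not vanish, and you need the zero of $\mathcal A(z^{-1})$ there (likewise at $z=b$).
\end{itemize}
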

}

Throughout the proof we work with the unnormalised products
\[
\Phi_k:=\varphi_k(x;a)\,\varphi_{n-k}(x;b), 
\]
so that
$\widetilde\Bs^{\,n}_k=\binom nk\Phi_k$, and with the correspondingly rescaled coefficients
$\widehat A_k:=\binom nk A_k(n,m)$ and $\widehat\pi_k:=\binom nk\pi_{n,k}$; all statements
carry over unchanged to $\widetilde\Bs^{\,n}_k$. The proof is entirely self-contained: it
consists of a pole analysis in the Laurent variable $z$ (Step~1), elementary manipulations of
finite products (Steps~3, 4 and 6, the algebraic details of which are collected in Appendix~\ref{app:certificates}), and a piece of finite-dimensional spectral theory
(Step~5).

\begin{proof}
{
We first work on the Zariski-open set of parameter values for which the candidate poles used below are simple and pairwise distinct. Every identity obtained there is an identity between rational functions of $a,b,c,d,q^k$ and $q^n$. It therefore extends by rational continuation to all parameter values satisfying the nonvanishing hypotheses \textup{(H1)}--\textup{(H3)}.
}
Let $\mathcal L=\mathcal L_{a,b,c,d}$ be the second-order Askey--Wilson divided-difference
operator, for which $\mathcal L p_m=\lambda_m p_m$ with
$\lambda_m=(q^{-m}-1)(1-abcd\,q^{m-1})$ \cite[\S16.5]{Ismail2005}. Writing
$z=\eit$ and letting $\eta$ denote the $q$-shift $z\mapsto qz$,
$\mathcal L$ acts on symmetric Laurent polynomials $f=f\big(\tfrac12(z+z^{-1})\big)$ by
\begin{equation}\label{eq:Lform}
\mathcal L f=A(z)\,(\eta f-f)+A(z^{-1})\,(\eta^{-1}f-f),\qquad
A(z)=\frac{(1-az)(1-bz)(1-cz)(1-dz)}{(1-z^{2})(1-qz^{2})}.
\end{equation}

\emph{Step 1 (tridiagonality, with explicit band).} We claim
\begin{equation}\label{eq:Ltridiag}
\mathcal L\,\Phi_k=\mathfrak b_{k}\,\Phi_{k+1}
+\mathfrak a_{k}\,\Phi_{k}+\mathfrak d_{k}\,\Phi_{k-1}
\qquad(0\le k\le n,\ \Phi_{-1}=\Phi_{n+1}:=0),
\end{equation}
with
\begin{align}
\mathfrak d_k&=\frac ab\,q^{\,2k-2n-1}\,
\frac{(1-q^{k})\big(1-\tfrac ab q^{k}\big)(1-acq^{k-1})(1-adq^{k-1})}
{\big(1-\tfrac ab q^{\,2k-n-1}\big)\big(1-\tfrac ab q^{\,2k-n}\big)},\label{eq:dk}\\
\mathfrak b_k&=\frac ba\,q^{-2k-1}\,
\frac{(1-q^{\,n-k})\big(1-\tfrac ba q^{\,n-k}\big)(1-bcq^{\,n-k-1})(1-bdq^{\,n-k-1})}
{\big(1-\tfrac ba q^{\,n-2k}\big)\big(1-\tfrac ba q^{\,n-2k-1}\big)},\label{eq:bk}
\end{align}
so that in particular $\mathfrak b_k=\mathfrak d_{n-k}\big|_{a\leftrightarrow b}$ and
$\mathfrak d_0=\mathfrak b_n=0$, in agreement with the boundary convention; the diagonal
entry $\mathfrak a_k$ is defined in the course of the argument. Divide the desired identity
by $\Phi_k$ and set
\[
H(z):=\frac{\mathcal L\Phi_k(z)}{\Phi_k(z)}=G(z)+G(1/z),\qquad
G(z):=A(z)\big(R(z)-1\big),\qquad R(z):=\frac{\Phi_k(qz)}{\Phi_k(z)},
\]
where the symmetry $\Phi_k(z)=\Phi_k(1/z)$ has been used to write
$\Phi_k(z/q)/\Phi_k(z)=R(1/z)$. Since $\varphi_k(x;a)=(az;q)_k(a/z;q)_k$, the shifted
quotient is the explicit rational function
\begin{equation}\label{eq:Rquot}
R(z)=\frac{(1-aq^{k}z)\big(1-\tfrac aq z^{-1}\big)(1-bq^{\,n-k}z)\big(1-\tfrac bq z^{-1}\big)}
{(1-az)\,\big(1-aq^{k-1}z^{-1}\big)\,(1-bz)\,\big(1-bq^{\,n-k-1}z^{-1}\big)},
\end{equation}
each of the four telescopings being of the form $(cq z;q)_k/(cz;q)_k=(1-cq^{k}z)/(1-cz)$.
Likewise
\begin{gather}
T_+(z):=\frac{\Phi_{k+1}(z)}{\Phi_k(z)}
=\frac{(1-aq^{k}z)(1-aq^{k}/z)}{(1-bq^{\,n-k-1}z)(1-bq^{\,n-k-1}/z)},\notag\\
T_-(z):=\frac{\Phi_{k-1}(z)}{\Phi_k(z)}
=\frac{(1-bq^{\,n-k}z)(1-bq^{\,n-k}/z)}{(1-aq^{k-1}z)(1-aq^{k-1}/z)}.\label{eq:Tquot}
\end{gather}
We now locate the poles of $H$. The candidate poles of $G(z)$ are the zeros of the
denominators in \eqref{eq:Lform} and \eqref{eq:Rquot}, namely
$z\in\{\pm1,\pm q^{-1/2}\}\cup\{1/a,\,1/b\}\cup\{aq^{k-1},\,bq^{\,n-k-1}\}$; on the generic parameter set fixed at the beginning of the proof these points are simple and pairwise distinct. The resulting rational identity extends to all parameters covered by \textup{(H)}. Four cancellation mechanisms dispose of all but the last pair.
\begin{enumerate}[label=\textup{(\alph*)},leftmargin=2.2em,itemsep=1pt]
\item At $z=1/a$ and $z=1/b$: the numerator of $A(z)$ contains the factors $(1-az)(1-bz)$,
which cancel the corresponding denominators of $R(z)$; hence $G$ is regular there.
\item At $z=\pm q^{-1/2}$, that is at $qz^{2}=1$: here $qz=1/z$, so
$\Phi_k(qz)=\Phi_k(1/z)=\Phi_k(z)$ by symmetry, whence $R(z)=1$; the simple zero of $R-1$
cancels the simple pole of $A$. (This is precisely the mechanism by which $\mathcal L$
preserves symmetric polynomials.)
\item At $z=\pm1$: the function $H(z)=G(z)+G(1/z)$ is invariant under $z\mapsto1/z$, a map
which fixes $\pm1$; if $G(z)\sim C/(1-z)$ near $z=1$ then $G(1/z)\sim -C/(1-z)$ there, so the
two simple poles cancel, and similarly at $z=-1$.
\item The poles of $G(1/z)$ are treated symmetrically and contribute the reciprocal points.
\end{enumerate}
Consequently $H$ is a rational function whose only poles, all simple, lie at
$z^{\pm1}=aq^{k-1}$ and $z^{\pm1}=bq^{\,n-k-1}$; these are exactly the poles of $T_-$ and
$T_+$ respectively. Define
\begin{gather*}
\mathfrak d_k:=\frac{\operatorname*{Res}_{z=aq^{k-1}}H(z)}{\operatorname*{Res}_{z=aq^{k-1}}T_-(z)},\qquad
\mathfrak b_k:=\frac{\operatorname*{Res}_{z=bq^{\,n-k-1}}H(z)}{\operatorname*{Res}_{z=bq^{\,n-k-1}}T_+(z)},\\
\mathfrak a_k:=\lim_{z\to\infty}\big(H-\mathfrak b_kT_+-\mathfrak d_kT_-\big)(z);
\end{gather*}
the residue quotients are evaluated in Appendix~\ref{app:certificates}
(Proposition~\ref{prop:residues}) and give precisely \eqref{eq:dk} and \eqref{eq:bk}, while
the limit defining $\mathfrak a_k$ exists because $H$, $T_+$ and $T_-$ all possess finite
limits at $z\to\infty$ (displayed in \eqref{eq:infvalues} below). The difference
\[
\Delta(z):=H(z)-\mathfrak b_k\,T_+(z)-\mathfrak a_k-\mathfrak d_k\,T_-(z)
\]
is symmetric under $z\mapsto1/z$, has no poles (the residues at $z^{\pm1}=aq^{k-1}$ and
$z^{\pm1}=bq^{\,n-k-1}$ vanish by construction, once at each point and hence, by symmetry, at
its reciprocal), and is bounded as $z\to\infty$; a rational function without poles on the
Riemann sphere is constant, and the constant is $\Delta(\infty)=0$ by the choice of
$\mathfrak a_k$. Multiplying through by $\Phi_k$ yields \eqref{eq:Ltridiag}. For later use we
record the values at infinity,
\begin{equation}\label{eq:infvalues}
H(\infty)=\frac{abcd}{q}\,(q^{n}-1)+(q^{-n}-1),\qquad
T_+(\infty)=\frac ab\,q^{\,2k-n+1},\qquad
T_-(\infty)=\frac ba\,q^{\,n-2k+1}.
\end{equation}

\emph{Step 2 (difference equation for the coefficients).} Insert
$p_m=\sum_k\widehat A_k\Phi_k$ into $\mathcal Lp_m=\lambda_mp_m$ and equate coefficients of
$\Phi_k$:
\begin{equation}\label{eq:qRacahdiff}
\mathfrak d_{k+1}\widehat A_{k+1}(n,m)+\mathfrak a_{k}\widehat A_{k}(n,m)
+\mathfrak b_{k-1}\widehat A_{k-1}(n,m)=\lambda_m\,\widehat A_k(n,m).
\end{equation}

\emph{Step 3 (gauge).} Introduce the explicit rational quantity
\begin{equation}\label{eq:rho}
\rho_k:=q^{-1}\,
\frac{(1-q^{\,n-k})\big(1-\tfrac ba q^{\,n-k}\big)\big(1-\tfrac ab q^{\,2k+2-n}\big)}
{(1-q^{\,k+1})\big(1-\tfrac ab q^{\,2k-n}\big)\big(1-\tfrac ab q^{\,k+1}\big)},
\qquad 0\le k\le n-1,
\end{equation}
whose numerators and denominators are nonzero under \textup{(H)}
(Lemma~\ref{lem:denominators}), and the gauge sequence
$\widehat c_0:=1$, $\widehat c_{k+1}:=\rho_k\,\widehat c_k$. A direct multiplication of
finite products, proved algebraically in Appendix~\ref{app:certificates}
(Proposition~\ref{prop:BD}), shows that
\begin{equation}\label{eq:BDdef}
B(k):=\mathfrak d_{k+1}\,\rho_k,\qquad D(k):=\frac{\mathfrak b_{k-1}}{\rho_{k-1}}
\end{equation}
simplify to
\begin{align}
B(k)&=\frac{(1-\alpha q^{k+1})(1-\beta\delta q^{k+1})(1-\gamma q^{k+1})(1-\gamma\delta q^{k+1})}
{(1-\gamma\delta q^{2k+1})(1-\gamma\delta q^{2k+2})},\label{eq:Bq}\\
D(k)&=q\,\frac{(1-q^{k})(1-\delta q^{k})(\beta-\gamma q^{k})(\alpha-\gamma\delta q^{k})}
{(1-\gamma\delta q^{2k})(1-\gamma\delta q^{2k+1})},\label{eq:Dq}
\end{align}
with $(\alpha,\beta,\gamma,\delta)$ as in \eqref{eq:params}: these are exactly the
coefficients of the standard $q$-Racah second-order difference equation
\cite[Eq.~(14.2.6)]{KLS2010}. Observe that $D(0)=0$ (factor $1-q^{k}$) and $B(n)=0$ (factor
$1-\gamma q^{k+1}=1-q^{\,k-n}$).

\emph{Step 4 (the diagonal).} The diagonal entry defined in Step~1 satisfies
\begin{equation}\label{eq:diagid}
\mathfrak a_k=-\big(B(k)+D(k)\big),\qquad 0\le k\le n .
\end{equation}
This is an identity between two explicitly displayed rational functions of $q^{k}$; it is
established in Appendix~\ref{app:certificates} (Proposition~\ref{prop:diag}) by verifying
that the difference of the two sides, a rational function of $q^{k}$ with at most six simple
poles, has vanishing residue at each of them and tends to zero as $q^{k}\to\infty$.
Consequently, in the gauge $\widehat A_k(n,m)=\widehat c_k\,y_k$ the recurrence
\eqref{eq:qRacahdiff} becomes
\begin{equation}\label{eq:gauged}
B(k)\,y_{k+1}-\big(B(k)+D(k)\big)y_k+D(k)\,y_{k-1}=\lambda_m\,y_k,\qquad 0\le k\le n,
\end{equation}
which coincides exactly, after the parameter identification \eqref{eq:params}, with the
standard $q$-Racah difference equation, whose eigenvalue
$(q^{-m}-1)(1-\alpha\beta q^{m+1})$ equals $\lambda_m$ because
$\alpha\beta q^{m+1}=abcd\,q^{m-1}$.

\emph{Step 5 (spectral resolution).} Let $T$ denote the $(n+1)\times(n+1)$ matrix of the
gauged operator on the left-hand side of \eqref{eq:gauged}. For each $0\le j\le n$ the vector
\[
\big(R_j(\mu(k);\alpha,\beta,\gamma,\delta\,|\,q)\big)_{k=0}^{n}
\]
is an eigenvector of $T$
with eigenvalue $\lambda_j$: in the interior this is the $q$-Racah difference equation
\cite[Eq.~(14.2.6)]{KLS2010}, and at the boundary rows $k=0$ and $k=n$ the equation persists
because $D(0)=0$ and $B(n)=0$. By hypothesis \textup{(H3)} the numbers
$\lambda_0,\dots,\lambda_n$ are pairwise distinct, so these $n+1$ eigenvectors are linearly
independent, exhaust the spectrum of $T$, and every eigenspace is one-dimensional. The
vector $\big(y_k\big)_k=\big(\widehat A_k(n,m)/\widehat c_k\big)_k$ satisfies
$Ty=\lambda_m y$ by Step~4, whence
\[
\widehat A_k(n,m)=c_m\,\widehat c_k\,R_m\big(\mu(k);\alpha,\beta,\gamma,\delta\,|\,q\big)
\]
for some scalar $c_m\ne0$.

\emph{Step 6 (prefactors and normalisation).} It remains to identify $\widehat c_k$ and
$c_m$. Telescoping the product $\widehat c_k=\prod_{j=0}^{k-1}\rho_j$ factor by factor,
\[
\prod_{j=0}^{k-1}\frac{1-q^{\,n-j}}{1-q^{\,j+1}}=\qbinom nk,\qquad
\prod_{j=0}^{k-1}\big(1-\tfrac ba q^{\,n-j}\big)=\ph{\tfrac ba q^{\,n-k+1}}{k},
\]
\[
\prod_{j=0}^{k-1}\frac{1-\tfrac ab q^{\,2j+2-n}}{1-\tfrac ab q^{\,2j-n}}
=\frac{1-\tfrac ab q^{\,2k-n}}{1-\tfrac ab q^{-n}},
\]
together with $\prod_{j=0}^{k-1}q^{-1}=q^{-k}$ and
$\prod_{j=0}^{k-1}\big(1-\tfrac ab q^{\,j+1}\big)^{-1}=\ph{\tfrac ab q}{k}^{-1}$, and
converting the two factors with negative exponents by means of
$1-\tfrac ab q^{-t}=-\tfrac ab q^{-t}\big(1-\tfrac ba q^{t}\big)$, one obtains from the finite-product calculation in Appendix~\ref{app:certificates}, Proposition~\ref{prop:telescope}
\begin{equation}\label{eq:chat}
\widehat c_k=\ph{ab}{n}\,\ph{b/a}{n}\;\times\;
q^{\,k}\,\qbinom nk\,
\frac{1-\tfrac ba q^{\,n-2k}}
{\ph{ab}{n}\,\ph{b/a}{n-k}\,\big(1-\tfrac ba q^{\,n-k}\big)\,\ph{\tfrac ab q}{k}} .
\end{equation}
For $m=0$ one has $p_0=1$ and $R_0\equiv1$, so $\widehat A_k(n,0)=c_0\widehat c_k$;
evaluating the expansion $1=\sum_k\widehat A_k(n,0)\Phi_k$ at the point
$x_0(a)=\tfrac12(a+a^{-1})$, where every $\Phi_k$ with $k\ge1$ vanishes
(Proposition~\ref{prop:obstruction}), gives
$\widehat A_0(n,0)=1/\big[\ph{ab}{n}\ph{b/a}{n}\big]$, whence
$c_0=1/\big[\ph{ab}{n}\ph{b/a}{n}\big]$ and $\widehat\pi_k=c_0\widehat c_k$ is precisely the
closed form \eqref{eq:prefactor} after reinstating the binomials. For general $m$, evaluate
$p_m=\sum_k\widehat A_k\Phi_k$ at $x_0(a)$: only $k=0$ survives, $R_m(\mu(0))=1$ because
$\ph{q^{0}}{j}=0$ for $j\ge1$, and the ${}_4\phi_3$ defining $p_m$ in \eqref{eq:AW}
collapses to $1$ at $z=a$ (the entry $\ph{a/z}{j}$ vanishes for $j\ge1$), so that
$p_m(x_0(a))=a^{-m}\ph{ab,ac,ad}{m}$. Dividing by
$\Phi_0(x_0(a))=\ph{ab}{n}\ph{b/a}{n}$ yields
$c_m/c_0=a^{-m}\ph{ab,ac,ad}{m}=\omega_{n,m}$, which is \eqref{eq:omega}. Assembling,
$A_k(n,m)=\pi_{n,k}\,\omega_{n,m}\,R_m(\mu(k))$, which is \eqref{eq:explicit}.
\end{proof}

\begin{remark}[standard truncation]\label{rem:notqracah}
The truncation is the standard one, $\gamma q=q^{-n}$: the family
\eqref{eq:explicit} is a genuine $q$-Racah polynomial of degree $m$ on the finite lattice
$k=0,1,\dots,n$, not an analytic continuation. Two of the three lower ${}_4\phi_3$
parameters, $ad$ and $ac$, together with the numerator entry $abcd\,q^{m-1}$, are data of
the target Askey--Wilson polynomial \eqref{eq:AW} (the third lower parameter, $q^{-n}$, is
the degree truncation), which is the structural reason the identification is so rigid.
\end{remark}

\begin{remark}[quasi-definite orthogonality in the B\'ezier index]\label{rem:orth}
Under \textup{(H2)} the matrix $T$ of \eqref{eq:gauged}, with entries
$T_{k,k+1}=B(k)$, $T_{k,k}=-\big(B(k)+D(k)\big)$ and $T_{k,k-1}=D(k)$, is irreducible
tridiagonal, and a diagonal symmetrisation is available in closed form: setting $h_0:=1$ and
\begin{equation}\label{eq:symm}
h_{k+1}:=h_k\,\frac{B(k)}{D(k+1)},\qquad 0\le k\le n-1,
\end{equation}
which is well defined and nonzero because $B(k)\ne0$ for $0\le k<n$ and $D(k)\ne0$ for
$0<k\le n$ (Lemma~\ref{lem:denominators}), one obtains
\[
h_kB(k)=h_{k+1}D(k+1),\qquad 0\le k\le n-1,
\]
which is precisely the entrywise condition, in position $(k,k+1)$, for
\[
HT=T^{\mathsf T}H,\qquad H:=\operatorname{diag}(h_0,\dots,h_n).
\]
Hence $T$ is symmetric with respect to the nondegenerate bilinear form
\[
\langle u,v\rangle_h=\sum_k h_ku_kv_k, 
\]
and, the eigenvalues being pairwise distinct under
\textup{(H3)}, eigenvectors associated with different eigenvalues are orthogonal for this
form; the functional it defines on polynomials in $\mu(k)$ is quasi-definite up to degree
$n$, and when the parameters are real the weights $h_k$ are real, though in general of mixed
sign. Consequently
\begin{equation}\label{eq:coeffOrth}
\sum_{k=0}^{n}\rho_A(k)\,A_k(n,m)\,A_k(n,m')=0\quad(m\ne m'),\qquad
\rho_A(k):=\frac{h_k}{\pi_{n,k}^{2}} ,
\end{equation}
where $h_k$ coincides, up to a nonzero common factor, with the standard $q$-Racah weight for
the parameters \eqref{eq:params}, whose consecutive ratio is exactly $B(k)/D(k+1)$: the
weight for the coefficients themselves absorbs the factor $\pi_{n,k}^{-2}$, and it is $h_k$
alone only for the gauged ratios $A_k(n,m)/(\pi_{n,k}\omega_{n,m})$. The classification of
the parameter regions in which $\rho_A$, or $h$, is positive is an interesting problem which
we do not pursue here.
\end{remark}

\begin{remark}[possible alternative approaches]\label{rem:engine}
Two further routes to \eqref{eq:explicit} suggest themselves, although we do not carry
either out here: a moment route, expanding $p_m(\cdot;a,b,c,d)$ in the shifted family
$p_l(\cdot;aq^{k},bq^{n-k},c,d)$ and integrating against the shifted weight of
Lemma~\ref{lem:wshift}, so that only $l=0$ survives and $A_k(n,m)$ is expressed through an
Askey--Wilson connection coefficient and the Askey--Wilson integral \cite[\S15.2]{Ismail2005};
and a structural route, since the pair formed by $\mathcal L$ and the multiplication
operator by $\mu(k)$, acting on the finite module spanned by
$\{\widetilde\Bs^{\,n}_k\}$, suggests a possible Leonard-pair or tridiagonal-pair
interpretation of \eqref{eq:Ltridiag}; establishing that the axioms of a Leonard pair
(diagonalisability of both operators, and irreducible tridiagonality of each in an eigenbasis
of the other) are actually met would give a representation-theoretic explanation of the
present results, and we leave it as a structural conjecture.
\end{remark}

\section{Limit transitions: recovering the known results}\label{sec:limits}

The vertical arrows of the Askey scheme are realised as limits of the lattice
\eqref{eq:latticetypes}; the individual arrows, each with its own scaling, are documented in
\cite[\S14]{KLS2010} and \cite[\S18.28]{DLMF}. It must be stressed that every arrow requires
an \emph{explicit parametrised scaling}: fixing $a,b,c,d,x$ and letting $q\to1$ does
\emph{not} produce the quadratic-lattice objects. Because all sums in \eqref{eq:conn} are
finite, once the scaling is fixed no analytic interchange of limit and summation is required
and the reductions are term-by-term algebraic limits.

\begin{center}
\renewcommand{\arraystretch}{1.3}
\begin{tabular}{@{}p{3.9cm}p{4.2cm}p{2.9cm}p{3.0cm}@{}}
\hline
\textbf{Lattice / limit} & \textbf{Scaling} & \textbf{Bernstein / target} & \textbf{Coefficients}\\
\hline
$q$-quadratic (this paper) & none & gen.-power / Askey--Wilson & $q$-Racah
(Thm.~\ref{thm:explicit})\\
$q$-quadratic $\xrightarrow{q\uparrow1}$ quadratic & $a{=}q^{A},b{=}q^{B},c{=}q^{C},d{=}q^{D}$
& Wilson powers / Wilson & Racah (\S\ref{sec:qtoone})\\
$q$-quadratic $\to$ $q$-linear & $b=\Lambda\to\infty$, $c=\widehat c/\Lambda$ &
normalised coefficients & $q$-Hahn (\S\ref{sec:qlinear})\\
$q$-linear $\xrightarrow{q\uparrow1}$ linear & $x$ fixed, $q\to1$ & discrete Bernstein / Hahn &
Hahn--Eberlein\ \cite{RAGZ1998}\\
\hline
\end{tabular}
\end{center}

\subsection{The limit $q\uparrow1$: from the Askey--Wilson to the Wilson connection}
\label{sec:qtoone}
Two scalings are required, one for the parameters and one for the variable. Substitute
\begin{equation}\label{eq:scaling}
a=q^{A},\qquad b=q^{B},\qquad c=q^{C},\qquad d=q^{D},\qquad z=q^{\,iX},
\end{equation}
so that $x=\cos\theta=\tfrac12(q^{\,iX}+q^{-iX})$; the $q$-Racah parameters
\eqref{eq:params} become $\alpha=q^{A+D-1}$, $\beta=q^{B+C-1}$, $\gamma=q^{-n-1}$,
$\delta=q^{A-B}$. Under \eqref{eq:scaling} the Askey--Wilson monomials contract to the
Wilson generalized powers: each factor of $\varphi_k$ satisfies
$\big(1-q^{A+iX+j}\big)\big(1-q^{A-iX+j}\big)\big/(1-q)^{2}\to(A+iX+j)(A-iX+j)$, whence
\begin{equation}\label{eq:philimit}
\lim_{q\uparrow1}\frac{\varphi_k\big(x;q^{A}\big)}{(1-q)^{2k}}
=(A+iX)_k\,(A-iX)_k=\rho_k(X;A),
\end{equation}
the Wilson generalized power in the variable $X$ (a polynomial of degree $k$ in $X^{2}$).
Likewise, from the normalisation in \eqref{eq:AW},
\begin{equation}\label{eq:plimit}
\lim_{q\uparrow1}\frac{p_m\big(x;q^{A},q^{B},q^{C},q^{D}\,\big|\,q\big)}{(1-q)^{3m}}
=W_m\big(X^{2};A,B,C,D\big),
\end{equation}
the Wilson polynomial \cite[\S9.1, \S14.1]{KLS2010}. For the coefficients, every quotient in
the ${}_4\phi_3$ of \eqref{eq:explicit} is a ratio of $q$-shifted factorials with $q$-power
arguments, for which the limit is elementary and term-by-term,
$\ph{q^{Y}}{j}/(1-q)^{j}\to(Y)_j$; since the sum has $m+1$ terms and the powers of
$(1-q)$ cancel within each term, one obtains
\begin{multline*}
\lim_{q\uparrow1}
\qhyp43\!\left[\genfrac{}{}{0pt}{}{q^{-m},\,q^{\,A+B+C+D+m-1},\,q^{-k},\,q^{\,A-B+k-n}}
{q^{\,A+D},\ q^{\,A+C},\ q^{-n}};q,q\right]\\
=\hyp43\!\left[\genfrac{}{}{0pt}{}{-m,\,A{+}B{+}C{+}D{+}m{-}1,\,-k,\,A{-}B{+}k{-}n}
{A{+}D,\ A{+}C,\ -n};1\right]
=:R^{\mathrm{Racah}}_m\big(\lambda(k)\big),
\end{multline*}
the Racah polynomial with parameters $(A{+}D{-}1,\,B{+}C{-}1,\,-n{-}1,\,A{-}B)$ and
$\lambda(k)=k(k+A-B-n)$ \cite[\S9.2]{KLS2010}. The prefactor limits are equally explicit:
counting one power of $(1-q)$ for each linear factor, with numerators and denominators
counted with opposite signs, the exact normalisations are
\begin{align}
\bar\pi_{n,k}:=\lim_{q\uparrow1}\,(1-q)^{2n}\,\pi_{n,k}
&=\frac{B-A+n-2k}{(A{+}B)_n\,(B{-}A)_{n-k}\,(B{-}A{+}n{-}k)\,(A{-}B{+}1)_k},
\label{eq:pibar}\\
\bar\omega_{n,m}:=\lim_{q\uparrow1}\,(1-q)^{-3m}\,\omega_{n,m}
&=(A{+}B)_m\,(A{+}C)_m\,(A{+}D)_m ,\label{eq:omegabar}
\end{align}
the exponent $2n$ in \eqref{eq:pibar} arising as $n+(n-k)+1+k-1$ from the five factor
groups of \eqref{eq:prefactor}, while $q^{k}\to1$ and
$\qbinom nk\big/\binom nk\to1$. Here and in \eqref{eq:pibar} the exponents
$A,B,C,D$ are assumed generic, so that no factor such as $B-A+n-k$ or $B-A+n-2k$
vanishes. At exceptional parameter values the separately displayed factors may possess
removable singularities; the limiting connection identity is then understood by rational
continuation of the complete coefficient, not necessarily of each factor in isolation.

\begin{proposition}[the Wilson connection]\label{prop:racahlimit}
Under the scalings \eqref{eq:scaling}, divide the identity
$p_m=\sum_kA_k(n,m)\widetilde\Bs^{\,n}_k$ by $(1-q)^{3m}$ and insert the normalisations
\eqref{eq:philimit}, \eqref{eq:pibar} and \eqref{eq:omegabar}; every factor converges, the
powers of $(1-q)$ balance (the coefficient $A_k(n,m)$ carries the exact weight
$(1-q)^{3m-2n}$, while the two generalized powers carry $(1-q)^{2k}$ and
$(1-q)^{2(n-k)}$), and the limit is the Wilson connection identity
\[
W_m\big(X^{2};A,B,C,D\big)
=\sum_{k=0}^{n}\bar\pi_{n,k}\,\bar\omega_{n,m}\,
R^{\mathrm{Racah}}_m\big(\lambda(k)\big)\,
\binom nk\,\rho_k(X;A)\,\rho_{n-k}(X;B),
\]
each $\rho_k(X;A)=(A+iX)_k(A-iX)_k$ being a polynomial of degree $k$ in $X^{2}$, which
expresses the connection coefficients between the Wilson generalized powers and the
Wilson polynomials through Racah polynomials with parameters
\[
\big(A+D-1,\ B+C-1,\ -n-1,\ A-B\big).
\]
\end{proposition}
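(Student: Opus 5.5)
The plan is to start from the exact identity of Theorem~\ref{thm:explicit},
\[
p_m\big(x;q^{A},q^{B},q^{C},q^{D}\,\big|\,q\big)=\sum_{k=0}^{n}\pi_{n,k}\,\omega_{n,m}\,R_m\big(\mu(k);\alpha,\beta,\gamma,\delta\,|\,q\big)\binom nk\,\varphi_k(x;q^{A})\,\varphi_{n-k}(x;q^{B}),
\]
specialised to the scaling \eqref{eq:scaling} with $z=q^{iX}$. The sum has only $n+1$ terms, so no interchange of limit and summation is needed. The argument is a term-by-term factor count.

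\textbf{Step 1: regroup by powers of $(1-q)$.} I will divide both sides by $(1-q)^{3m}$ and write each summand as
\[
\big[(1-q)^{2n}\pi_{n,k}\big]\cdot\big[(1-q)^{-3m}\omega_{n,m}\big]\cdot R_m\cdot\binom nk\cdot\frac{\varphi_k(x;q^A)}{(1-q)^{2k}}\cdot\frac{\varphi_{n-k}(x;q^B)}{(1-q)^{2(n-k)}}.
\]
The powers of $(1-q)$ multiply to $(1-q)^{2n-3m+2k+2(n-k)-2n}\cdot(1-q)^{-3m}\cdot(1-q)^{3m}$, which is consistent once the identity is divided by $(1-q)^{3m}$. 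I will check this bookkeeping explicitly, since it is the point where an exponent error would surface.

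\textbf{Step 2: pass to the limit factor by factor.} The left side tends to $W_m(X^2;A,B,C,D)$ by \eqref{eq:plimit}. The two generalized powers tend to $\rho_k(X;A)$ and $\rho_{n-k}(X;B)$ by \eqref{eq:philimit}. The prefactors tend to $\bar\pi_{n,k}$ and $\bar\omega_{n,m}$ by \eqref{eq:pibar} and \eqref{eq:omegabar}. The ${}_4\phi_3$ tends to the terminating ${}_4F_3$, which is the Racah polynomial, by the displayed term-by-term limit.

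\textbf{Step 3: justify the prefactor limits.} This is the main obstacle, because the factors of $\pi_{n,k}$ come in the mixed forms $1-\tfrac ba q^{t}$ and $1-\tfrac ab q^{t}$. Substituting $a/b=q^{A-B}$, each factor $1-q^{Y}$ behaves like $(1-q)Y$, provided $Y\neq0$. Genericity of $A,B,C,D$ guarantees $Y\neq0$ for every factor: for instance $B-A+n-2k\ne0$, $(A+B)_n\ne0$, and $(A-B+1)_k\ne0$. The net exponent is then the count $n+(n-k)+1+k-1=2n$ stated in the text. Also $q^k\to1$, and $\qbinom nk\to\binom nk$ cancels the prefactor $1/\binom nk$. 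I will verify that \eqref{eq:pibar} follows and that the binomial from $\widetilde\Bs^{\,n}_k$ survives, as displayed.

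\textbf{Step 4: identify the Racah parameters.} Reading off the parameters from \eqref{eq:params} under the scaling gives $\alpha\to A+D-1$, $\beta\to B+C-1$, $\gamma\to-n-1$, and $\delta\to A-B$. The node is $\lambda(k)=k(k+\gamma+\delta+1)=k(k+A-B-n)$, which matches the upper entries $-k$ and $A-B+k-n$. Finally, nongeneric parameter values are handled by rational continuation in $A,B,C,D$, since both sides are rational in these parameters.
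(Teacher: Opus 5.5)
Your proposal is correct and follows the paper's own argument. The identity of Theorem~\ref{thm:explicit} is a finite sum, so you pass to the limit factor by factor using \eqref{eq:plimit}, \eqref{eq:philimit}, \eqref{eq:pibar}, \eqref{eq:omegabar} and the term-by-term ${}_4\phi_3\to{}_4F_3$ limit, check the $(1-q)$ balance, and read off the Racah parameters. The one blemish is the garbled exponent line in Step~1, but the regrouping you display there is exactly right: the inserted normalisations multiply to $(1-q)^{2n}(1-q)^{-3m}(1-q)^{-2k}(1-q)^{-2(n-k)}=(1-q)^{-3m}$, which is precisely the division by $(1-q)^{3m}$.
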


\begin{proof}
All sums are finite, so it suffices to verify the convergence of each factor and the balance
of the powers of $(1-q)$. The former is the content of \eqref{eq:philimit},
\eqref{eq:plimit}, \eqref{eq:pibar} and \eqref{eq:omegabar}, together with the displayed
limit of the ${}_4\phi_3$; the latter is the count just given, and the identification of the
limit ${}_4F_3$ as the Racah polynomial is \cite[\S9.2]{KLS2010}.
\end{proof}

{
\subsection{Degeneration to the $q$-linear level}\label{sec:qlinear}
We derive the $q$-linear degeneration directly from the closed formula of Theorem~\ref{thm:explicit}. Keep $q$, $a$, $d$, and $\widehat c$ fixed, let $\Lambda\to\infty$, and set
\begin{equation}\label{eq:qlinscaling}
b=\Lambda,
\qquad
c=\frac{\widehat c}{\Lambda}.
\end{equation}
Then
\[
\alpha=\frac{ad}{q},
\qquad
\beta=\frac{bc}{q}=\frac{\widehat c}{q},
\qquad
\gamma=q^{-n-1},
\qquad
\delta=\frac ab=\frac{a}{\Lambda}\longrightarrow0.
\]
In particular,
\[
\gamma\delta q^{k+1}=\frac{a}{\Lambda}q^{k-n}\longrightarrow0,
\qquad
\beta\delta q=ac=\frac{a\widehat c}{\Lambda}\longrightarrow0,
\]
and the $q$-quadratic node satisfies
\[
\mu_{\Lambda}(k)=q^{-k}+\frac{a}{\Lambda}q^{k-n}\longrightarrow q^{-k}.
\]
Since the basic hypergeometric series terminates, its limit may be taken term by term:
\begin{align}
&R_m\!\left(
\mu_{\Lambda}(k);
\frac{ad}{q},
\frac{\widehat c}{q},
q^{-n-1},
\frac{a}{\Lambda}
\,\middle|\,q\right)
\notag\\
&\quad={}_4\phi_3\!\left[
\genfrac{}{}{0pt}{}
{q^{-m},\,a\widehat c d\,q^{m-1},\,q^{-k},\,\frac{a}{\Lambda}q^{k-n}}
{ad,\,\frac{a\widehat c}{\Lambda},\,q^{-n}};q,q\right]
\notag\\
&\quad\longrightarrow
{}_3\phi_2\!\left[
\genfrac{}{}{0pt}{}
{q^{-m},\,a\widehat c d\,q^{m-1},\,q^{-k}}
{ad,\,q^{-n}};q,q\right].
\label{eq:qhahnlimit}
\end{align}
With the normalisation
\[
Q_m(q^{-k};\alpha',\beta',n\mid q)
:={}_3\phi_2\!\left[
\genfrac{}{}{0pt}{}
{q^{-m},\,\alpha'\beta' q^{m+1},\,q^{-k}}
{\alpha' q,\,q^{-n}};q,q\right],
\]
the limiting polynomial equals
\begin{equation}\label{eq:qhahn-identification}
Q_m\!\left(q^{-k};\frac{ad}{q},\frac{\widehat c}{q},n\,\middle|\,q\right),
\end{equation}
because $\alpha' q=ad$ and
\[
\alpha'\beta' q^{m+1}
=\frac{ad}{q}\frac{\widehat c}{q}q^{m+1}
=a\widehat c d\,q^{m-1}.
\]

The prefactors have explicit limits. From~\eqref{eq:omega} and
\[
(\Lambda u;q)_r\sim(-\Lambda u)^r q^{\binom r2},
\qquad \Lambda\to\infty,
\]
we obtain
\begin{equation}\label{eq:omega-qlinear-limit}
\lim_{\Lambda\to\infty}\Lambda^{-m}\omega_{n,m}
=(-1)^m q^{\binom m2}(ad;q)_m.
\end{equation}
Likewise, using~\eqref{eq:prefactor},
\begin{equation}\label{eq:pi-qlinear-limit}
\lim_{\Lambda\to\infty}\Lambda^{2n-k}\pi_{n,k}
=(-1)^k a^{-k}
q^{-\binom n2-\binom{n-k}{2}}
\frac{\qbinom nk}{\binom nk}.
\end{equation}
Indeed, the quotient of the two linear factors involving $b/a$ tends to $q^{-k}$, which cancels the leading factor $q^k$, while
\[
(a\Lambda;q)_n\sim(-a\Lambda)^n q^{\binom n2},
\qquad
(\Lambda/a;q)_{n-k}
\sim\left(-\frac{\Lambda}{a}\right)^{n-k}q^{\binom{n-k}{2}},
\]
and $((a/\Lambda)q;q)_k\to1$.

\begin{proposition}[the $q$-Hahn degeneration]\label{prop:limits}
Under the scaling~\eqref{eq:qlinscaling}, for every $0\le k,m\le n$,
\[
R_m\!\left(
q^{-k}+\frac{a}{\Lambda}q^{k-n};
\frac{ad}{q},
\frac{\widehat c}{q},
q^{-n-1},
\frac{a}{\Lambda}
\,\middle|\,q\right)
\longrightarrow
Q_m\!\left(q^{-k};\frac{ad}{q},\frac{\widehat c}{q},n\,\middle|\,q\right).
\]
Moreover, the prefactors satisfy~\eqref{eq:omega-qlinear-limit} and~\eqref{eq:pi-qlinear-limit}. Thus the polynomial part of the suitably normalized connection coefficients degenerates from $q$-Racah to $q$-Hahn on the $q$-linear lattice $q^{-k}$.
\end{proposition}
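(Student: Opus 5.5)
The approach is to take limits directly in the closed form of Theorem~\ref{thm:explicit}, which is legitimate because every object involved is a finite sum or a finite product. The proposition has three parts, and I will treat them in order: the polynomial part, then $\omega_{n,m}$, then $\pi_{n,k}$.

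\emph{Polynomial part.} I will write the $q$-Racah polynomial as the terminating ${}_4\phi_3$ in \eqref{eq:explicit} and substitute $b=\Lambda$, $c=\widehat c/\Lambda$.

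\begin{itemize}
\item The upper entries become $q^{-m}$, $a\widehat c d\,q^{m-1}$, $q^{-k}$ and $\tfrac a\Lambda q^{k-n}$.
\item The lower entries become $ad$, $a\widehat c/\Lambda$ and $q^{-n}$.
\item The $j$-th term contains the ratio $\ph{\tfrac a\Lambda q^{k-n}}{j}/\ph{a\widehat c/\Lambda}{j}$, which tends to $1$ as $\Lambda\to\infty$. The denominator is nonzero for large $\Lambda$.
\item The remaining factors in each term do not depend on $\Lambda$.
\end{itemize}

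Termwise convergence therefore gives \eqref{eq:qhahnlimit}. The identification with $Q_m$ is then just $\alpha'q=ad$ and $\alpha'\beta'q^{m+1}=a\widehat c d\,q^{m-1}$. One point needs a remark: the sum is truncated by $q^{-k}$ and $q^{-n}$ with $k\le n$. Both are independent of $\Lambda$, so the same finite index range appears on both sides and no denominator $\ph{q^{-n}}{j}$ vanishes for $j\le k$.

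\emph{Limit of $\omega_{n,m}$.} In \eqref{eq:omega} we have $\ph{ac}{m}\to1$, since $ac=a\widehat c/\Lambda$, and $\ph{ad}{m}$ is fixed. For the remaining factor I use the elementary asymptotics $(\Lambda u;q)_r=\prod_{j<r}(1-\Lambda uq^j)\sim(-\Lambda u)^r q^{\binom r2}$. This gives $\ph{a\Lambda}{m}\sim(-a\Lambda)^mq^{\binom m2}$. The factor $a^{m}$ produced here cancels against the prefactor $a^{-m}$, which yields \eqref{eq:omega-qlinear-limit}.

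\emph{Limit of $\pi_{n,k}$.} This step needs the most care, because several $\Lambda$-powers must be balanced. I will go through \eqref{eq:prefactor} factor by factor.

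\begin{itemize}
\item The numerator factor $1-\tfrac\Lambda a q^{n-2k}$ behaves like $-\tfrac\Lambda aq^{n-2k}$.
\item The denominator factor $1-\tfrac\Lambda aq^{n-k}$ behaves like $-\tfrac\Lambda aq^{n-k}$. The ratio of these two factors therefore tends to $q^{-k}$, which cancels the leading $q^k$.
\item $\ph{a\Lambda}{n}$ behaves like $(-a\Lambda)^nq^{\binom n2}$.
\item $\ph{\Lambda/a}{n-k}$ behaves like $(-\Lambda/a)^{n-k}q^{\binom{n-k}2}$.
\item $\ph{\tfrac a\Lambda q}{k}\to1$.
\end{itemize}

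Collecting the $\Lambda$-powers gives $\Lambda^{-n-(n-k)}=\Lambda^{-(2n-k)}$. The sign is $(-1)^{-n-(n-k)}=(-1)^k$. The $a$-power is $a^{-n}a^{n-k}=a^{-k}$. Together with $\qbinom nk/\binom nk$, this is exactly \eqref{eq:pi-qlinear-limit}.

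The main obstacle is this sign and power bookkeeping in the last step. I would double-check it at $k=0$ against $\pi_{n,0}=1/[\ph{b/a}{n}\ph{ab}{n}]$, which predicts $\Lambda^{-2n}q^{-2\binom n2}$ with sign $+1$, and at $k=n$ against $\pi_{n,n}=1/[\ph{a/b}{n}\ph{ab}{n}]$. At $k=n$ we have $\ph{a/\Lambda}{n}\to1$, so the limit is $(-a\Lambda)^{-n}q^{-\binom n2}$, which matches $(-1)^na^{-n}q^{-\binom n2}\Lambda^{-n}$.
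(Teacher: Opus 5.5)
Your proposal is correct and follows essentially the same route as the paper: a termwise limit in the terminating ${}_4\phi_3$, where the entries $\tfrac{a}{\Lambda}q^{k-n}$ and $a\widehat c/\Lambda$ tend to zero, followed by the finite-product asymptotics $(\Lambda u;q)_r\sim(-\Lambda u)^r q^{\binom r2}$ for $\omega_{n,m}$ and $\pi_{n,k}$, with the same bookkeeping of $\Lambda$-powers, signs and $a$-powers. Your endpoint checks against the closed forms of $\pi_{n,0}$ and $\pi_{n,n}$ are a sensible consistency test that the paper leaves implicit.
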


\begin{proof}
All the series are terminating. Under~\eqref{eq:qlinscaling}, the upper parameter $(a/\Lambda)q^{k-n}$ and the lower parameter $a\widehat c/\Lambda$ tend to zero, so the associated finite $q$-shifted factorials tend to one. This proves~\eqref{eq:qhahnlimit}. The parameter identification~\eqref{eq:qhahn-identification} follows directly from the defining ${}_3\phi_2$ series. The prefactor limits follow from the displayed finite-product asymptotics.
\end{proof}

\begin{remark}\label{rem:qlinearbasis}
Proposition~\ref{prop:limits} establishes the $q$-Hahn degeneration of the normalised
connection coefficients. It does not, by itself, identify the unscaled functions
$\widetilde\Bs_k^n(x;a,\Lambda)$ with a fixed $q$-Bernstein basis, such as those of
\cite{Phillips1997,OrucPhillips2003} and their shape-parameter extensions \cite{QinHu2013}.
Such an identification also requires a $k$-dependent normalisation of
$\varphi_{n-k}(x;\Lambda)$ and the affine degeneration of the polynomial variable that
converts the $q$-quadratic lattice into a $q$-linear lattice; we leave it as an open task.
\end{remark}
}

\section{A numerical case study: airfoil least squares in the Askey--Wilson basis}\label{sec:apps}

This section is a numerical experiment, deliberately separated from the proofs in
\S\S\ref{sec:obstruction}--\ref{sec:limits}. Its relation to the theoretical
problem is precise but two-stage. A degree-$n$ approximation of the CST shape function is an
element $P_n\in\Pi_n$ of the same polynomial space in which the connection problem
\eqref{eq:conn} is posed. The polynomial is first fitted and evaluated in the Askey--Wilson
basis; Theorem~\ref{thm:explicit} then converts its Askey--Wilson coefficient vector, without
changing $P_n$, into coefficients of the generalized-power basis \eqref{eq:genbern}. CST and
the least-squares error model are external numerical ingredients, not consequences of the
connection theorem. The experiment also compares the affine Bernstein basis
\eqref{eq:bern}, representing the geometric side of the dichotomy, with the Askey--Wilson
representation, representing its orthogonal side. Full reproducibility data are collected in
Table~\ref{tab:repro}.

\subsection{The test geometry}
As data we take the NACA~2412 aerofoil, the section of the Cessna~172 wing: maximum camber
$2\%$ at $40\%$ chord and maximum thickness $12\%$ at $30\%$ chord. To remove any ambiguity
between the open and closed trailing-edge variants of the four-digit family, we record the
formulas actually used, with $x\in[0,1]$ the chord fraction and $t=0.12$, $\mathfrak m=0.02$,
$\mathfrak p=0.4$: the closed-trailing-edge thickness distribution
\[
y_t(x)=5t\big(0.2969\sqrt x-0.1260\,x-0.3516\,x^{2}+0.2843\,x^{3}-0.1036\,x^{4}\big),
\]
the piecewise parabolic camber line
\[
y_c(x)=
\begin{cases}
\dfrac{\mathfrak m}{\mathfrak p^{2}}\big(2\mathfrak p\,x-x^{2}\big), & 0\le x\le\mathfrak p,\\[6pt]
\dfrac{\mathfrak m}{(1-\mathfrak p)^{2}}\big((1-2\mathfrak p)+2\mathfrak p\,x-x^{2}\big),
& \mathfrak p\le x\le1,
\end{cases}
\]
and the surfaces $x_{u,l}=x\mp y_t\sin\theta_c$, $y_{u,l}=y_c\pm y_t\cos\theta_c$ with
$\theta_c=\arctan y_c'$. All fits below are performed on the ordinates $y_{u}$ and $y_{l}$
regarded as functions of the standard NACA construction parameter $x$, not of the displaced
physical abscissae $x_{u}$ and $x_{l}$; accordingly, the reported errors are ordinate errors
at equal values of the construction parameter. Thus the reported discrepancy is a
parameter-matched ordinate error, not a Euclidean or normal-distance error between the two
physical parametric curves. The ordinates exhibit the classical leading-edge square-root
behaviour $y\sim\sqrt{x}$ as $x\to0$ (diverging slope; nose radius
$r_{\mathrm{LE}}=1.1019\,t^{2}=1.59\times10^{-2}c$). A plain polynomial/B\'ezier least-squares fit of $y(x)$ on the sampling rule specified in
Table~\ref{tab:repro} resolves this endpoint singularity only slowly: the maximum
parameter-matched ordinate error remains near the leading edge and follows an empirical
power law with fitted exponent $-0.95$ over the tested degree range. It is still
$2.5\times10^{-3}c$ at degree $24$. Extrapolating that fitted power law, rather than
performing an actual degree-$800$ solve, predicts that a degree of order $800$ would be
needed to reach the $10^{-4}c$ benchmark (Fig.~\ref{fig:conv}, upper curve).

\subsection{The CST decomposition, and what it is not}
The standard aeronautical remedy is the class--shape transformation (CST) of Kulfan
\cite{Kulfan2008}, which the benchmark of \cite{Masters2017} ranks among the most
economical of the common parameterisations, alongside B\'ezier--PARSEC
\cite{DerksenRogalsky2010} and Hicks--Henne \cite{HicksHenne1978}. CST writes
\begin{equation}\label{eq:cst}
y(x)=C(x)\,S(x),\qquad C(x)=x^{1/2}(1-x),
\end{equation}
where the class function $C$ carries the leading-edge square root and the trailing-edge
closure. For each surface the quotient $S=y/C$ has finite one-sided endpoint limits
and is piecewise smooth, with a possible loss of higher-order smoothness at the camber break
$x=\mathfrak p$. Its leading-edge limits are
$S_u(0^+)=+\sqrt{2r_{\mathrm{LE}}}=+0.1781$ and
$S_l(0^+)=-\sqrt{2r_{\mathrm{LE}}}=-0.1781$. One remark is in order, to avoid
an identification we do \emph{not} make: $C$ contains the fractional power $x^{1/2}$, which
is \emph{not} an Askey--Wilson monomial \eqref{eq:phi}: the $\varphi_k$ are defined for
integer $k$ and are polynomials, and the connection theory of \S\ref{sec:main} concerns
integer indices only. The structural kinship between \eqref{eq:cst} and the endpoint-anchored
factorisations of this paper is a motivating analogy, not a theorem; developing a
fractional-index extension of \S\ref{sec:main} would be a separate undertaking.
The direct mathematical link begins only after $S_u$ and $S_l$ have been replaced by
polynomial approximants $P_n\in\Pi_n$. These polynomials admit both an Askey--Wilson
representation and, under \textup{(H1)}, the generalized-power representation of
Theorem~\ref{thm:explicit}. The class function $C$ is not converted and does not enter the
connection coefficients.

\subsection{The experiment}
We fit the shape function $S$ of each surface by a degree-$n$ polynomial $P_n$ and
reconstruct the parameterised ordinates as $y_n=CP_n$. If
$P_n=\sum_{m=0}^n c_m p_m$, the theoretical connection gives
\[
P_n=\sum_{k=0}^n w_k\widetilde\Bs_k^{\,n},
\qquad w_k=\sum_{m=0}^n A_k(n,m)c_m,
\]
with $A_k(n,m)$ from Theorem~\ref{thm:explicit}. The reconstruction $y_n$ is unchanged by
this conversion; only the coordinates of its polynomial factor change. The first two items
below are two bases for exactly the same unweighted least-squares problem at the same
cosine-spaced nodes. The third is a distinct weighted problem at Gauss--Askey--Wilson nodes
and is included to isolate the conditioning supplied by exact discrete orthogonality.
\begin{enumerate}[leftmargin=1.8em,itemsep=2pt]
\item \textbf{Bernstein basis, arbitrary nodes.} Design matrix $V^{B}_{jk}=\Bs^n_k(t_j)$,
where $t_j:=2x_j-1\in[-1,1]$ maps the $M=400$ cosine-spaced chord points onto the
Askey--Wilson interval; the right-hand side is the sampled vector
$S(x_j)=y(x_j)/C(x_j)$. The measured condition numbers $\kappa_2(V^{B})$ of the design
matrix (not of the normal equations, whose condition number would be the square) grow
geometrically, from $1.0\times10^{1}$ at $n=4$ to $6.5\times10^{5}$ at $n=20$ and
$1.0\times10^{7}$ at $n=24$ (Fig.~\ref{fig:cond}). Normalising the columns of $V^{B}$ in the
discrete norm reduces these values by no more than a factor of two (for instance
$5.3\times10^{6}$ instead of $1.0\times10^{7}$ at $n=24$), so the growth reflects the
geometry of the basis and not a mere diagonal scaling.
\item \textbf{Orthonormal Askey--Wilson basis, same nodes.} Design matrix
$V^{AW}_{jm}=\hat p_m(t_j)$, with $\hat p_m$ evaluated by the three-term recurrence and
normalised through Gauss--Askey--Wilson quadrature; parameters as in Table~\ref{tab:repro}.
For these non-Gaussian nodes the Gram matrix is not diagonal and
$\kappa_2(V^{AW})$ is not $1$. Because both design matrices span the same polynomial
space, the two unweighted fits define the same degree-$n$ fitted polynomial in exact
arithmetic; only the coordinate representation and its numerical conditioning differ. The
Askey--Wilson design matrix is, however, dramatically better conditioned and grows only
mildly: measured $5.0\times10^{1}$ at $n=4$, $1.6\times10^{3}$ at $n=20$,
$2.1\times10^{3}$ at $n=24$. The fitted vector $(c_m)$ is exactly the input of the
connection map displayed above.
\item \textbf{Weighted least squares at Gauss nodes.} This is not the same discrete
least-squares objective as in items 1--2: both the nodes and the weights are changed. If the
shape data are sampled at the $N$-point Gauss--Askey--Wilson nodes with their quadrature
weights, $N=n+1$, the rule is exact for all products $\hat p_j\hat p_m$, whose degree is at
most $2n$. Hence $V^{\mathsf T}WV=I$ in exact arithmetic and the orthogonal coefficients are
simply $V^{\mathsf T}W\mathbf s$, where $\mathbf s$ is the sampled shape-data vector. The
measured $\kappa_2(W^{1/2}V)$ is $1$ to ten digits (Gram deviation
$\le4\times10^{-14}$ for $n\le24$). This, and only this, is the precise sense in which the
orthogonal representation yields a perfectly conditioned design matrix.
\end{enumerate}
For the unweighted cosine-node problem of items 1--2, with $n=9$ (ten coefficients
per surface), the reconstructed ordinates approximate the exact NACA~2412 data as summarised
below (Figs.~\ref{fig:airfoil}--\ref{fig:conv}):

\begin{center}
\small
\renewcommand{\arraystretch}{1.2}
\begin{tabular}{@{}lcc@{}}
\hline
Surface & maximum error $/c$ & RMS error $/c$\\
\hline
upper & $4.1\times10^{-5}$ & $1.8\times10^{-5}$\\
lower & $2.4\times10^{-4}$ & $1.0\times10^{-4}$\\
\hline
\end{tabular}
\end{center}

\noindent Thus ten polynomial coefficients per surface place the upper-surface ordinate error within
the prescribed $10^{-4}c$ benchmark tolerance and the lower-surface error within a small
multiple of it. These statements concern parameter-matched ordinates; they do not
assert the same bounds in Euclidean, normal-distance or Hausdorff metrics. The extrapolated
plain representation of $y$ itself would require hundreds of coefficients at this tolerance. This
accords with the general conclusion of \cite{Masters2017} that efficiency statements are
tolerance-dependent; all numbers here refer to this airfoil, these nodes and this tolerance.

\begin{figure}[t]
\centering
\includegraphics[width=\linewidth]{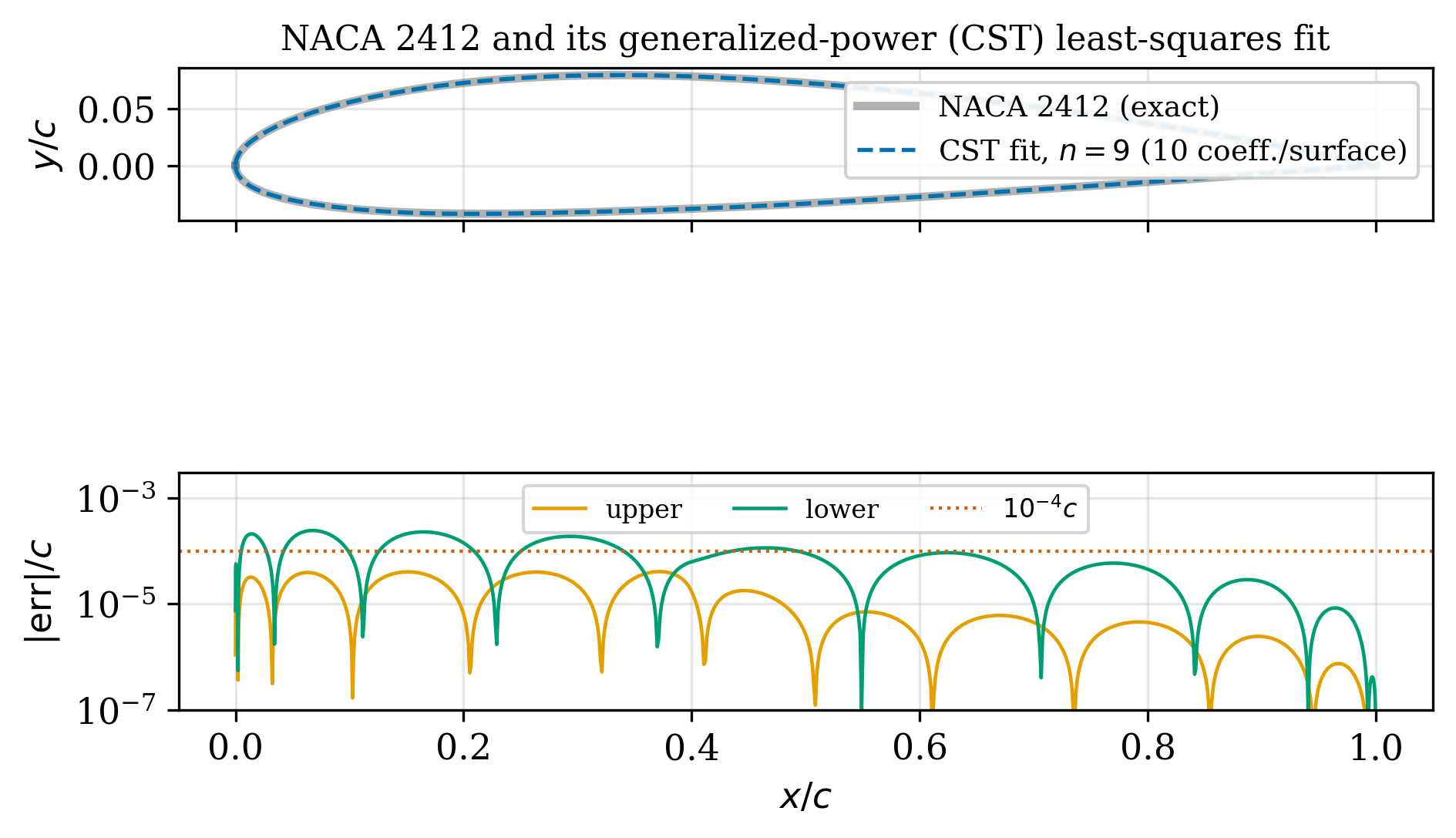}
\caption{NACA~2412 ordinates and their degree-$9$ CST least-squares fits as functions of
the construction parameter $x$ (top), with the parameter-matched absolute ordinate error
for each surface (bottom). Ten coefficients per surface place the upper-surface error within
the prescribed $10^{-4}c$ benchmark tolerance; the largest residuals occur near the leading
edge.}
\label{fig:airfoil}
\end{figure}

\begin{figure}[t]
\centering
\includegraphics[width=0.82\linewidth]{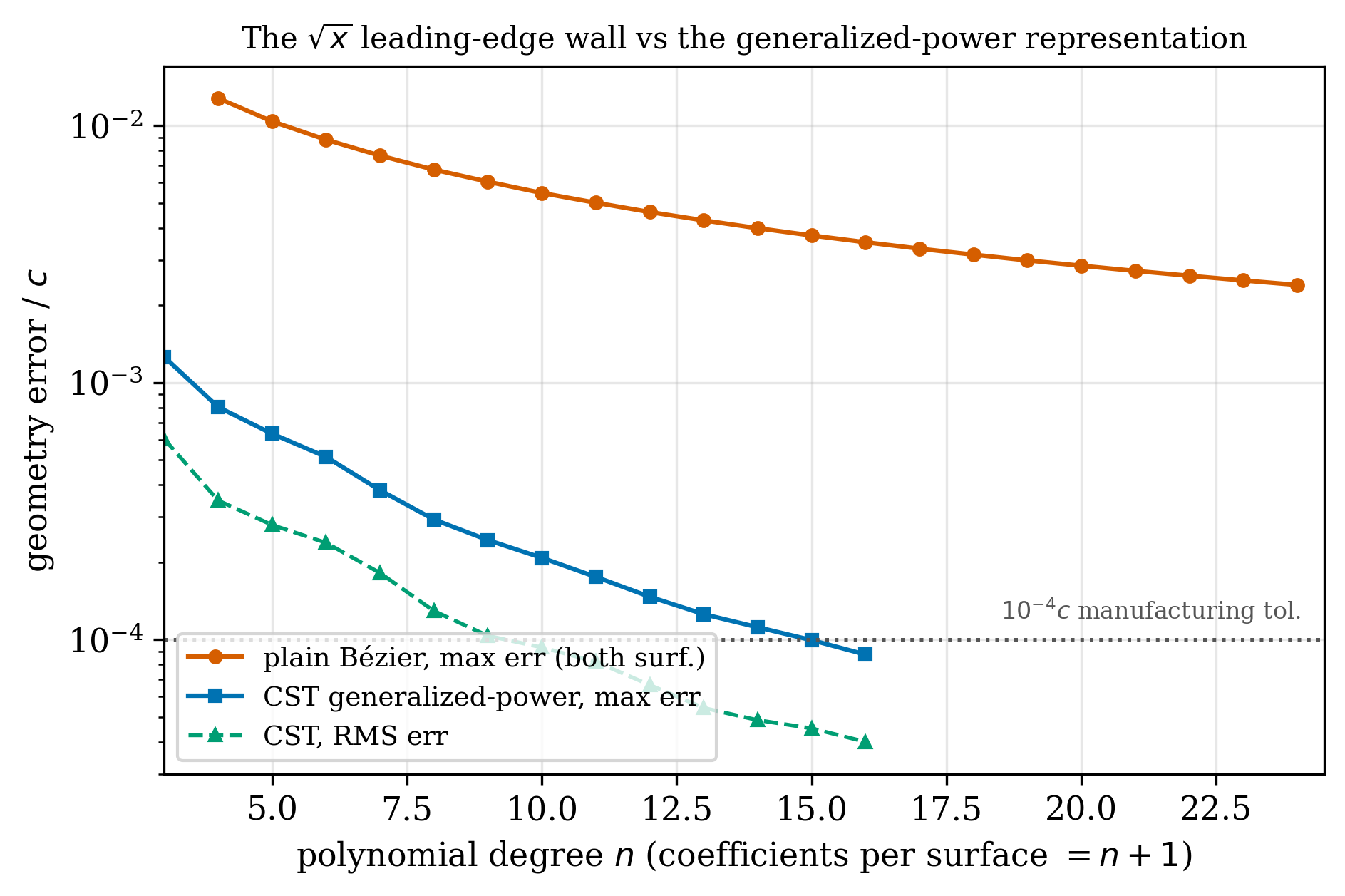}
\caption{Measured parameter-matched ordinate error versus degree for NACA~2412. The
orange and blue curves report the maximum over both surfaces for, respectively, the plain
polynomial/B\'ezier fit of $y$ and the CST reconstruction obtained by fitting $S$; the green
curve reports the corresponding CST RMS error.}
\label{fig:conv}
\end{figure}

\subsection{Exact conversion versus numerical stability}\label{sec:convstab}
Theorem~\ref{thm:explicit} supplies the change of basis
$w_k=\sum_m A_k(n,m)c_m$ from orthogonal coefficients to generalized-power basis
coefficients in closed form. Because the generalized-power basis is not a
nonnegative partition of unity, the numbers $w_k$ are not B\'ezier control ordinates in the
usual convex-hull sense. This conversion is the direct application of the main
connection theorem. By contrast, the affine-Bernstein/Askey--Wilson conditioning comparison
is complementary: $A$ converts to generalized-power coefficients, not to affine Bernstein
coefficients. Two properties must be kept apart.
\emph{Exactness:} because every entry is an explicit ratio of $q$-shifted factorials, the
conversion can be evaluated in exact rational (or arbitrary-precision) arithmetic; for $n=8$, direct substitution of the exact coefficients into the two finite expansions
gives identical polynomials. This degree-$8$ test verifies, at the rational parameters
of Table~\ref{tab:repro}, the same coefficient map that can be applied to any fitted
$P_n\in\Pi_n$. It is intentionally separate from the degree-$9$ NACA accuracy example and
is not an accuracy test of the airfoil fit.
\emph{Stability:} exactness of a formula does not imply a well-conditioned map. Here $A$
denotes the $(n+1)\times(n+1)$ matrix $\big(A_k(n,m)\big)_{k,m}$ of
Theorem~\ref{thm:explicit} itself, in the normalisations of \eqref{eq:AW} and
\eqref{eq:genbern}; its condition number depends on these normalisations, and rescaling
either basis rescales rows or columns accordingly. We measured
$\kappa_2(A)=4.2\times10^{5}$ at $n=4$ and $1.2\times10^{11}$ at $n=8$ (parameters of
Table~\ref{tab:repro}): the normwise first-order condition estimate permits relative errors to be amplified by a
factor as large as $1.2\times10^{11}$ at $n=8$, corresponding to a potential loss of about
eleven decimal digits in the worst case. This is a condition-number bound, not a
claim that every conversion loses eleven digits. It nevertheless shows severe ill-conditioning for the normalization and parameter regime used here.
The practical protocol that follows from these measurements is: fit and store in the
orthogonal basis (representation 2 or 3 above); evaluate geometry by Clenshaw-type recurrence
in that basis; and use the closed form of $A$ (in exact or extended precision, which the
explicit $q$-Pochhammer structure makes inexpensive) only when generalized-power coefficients are genuinely required. The unweighted
least-squares problems were solved with the SVD-based divide-and-conquer routine of LAPACK
(\texttt{DGELSD}). For the weighted Gauss-node formulation, the orthogonal
coefficients were computed as $V^{\mathsf T}W\mathbf s$, since
$V^{\mathsf T}WV=I$ to quadrature accuracy, rather than by a generic least-squares solve, in IEEE double precision except where exact arithmetic is stated.

\begin{figure}[t]
\centering
\includegraphics[width=0.82\linewidth]{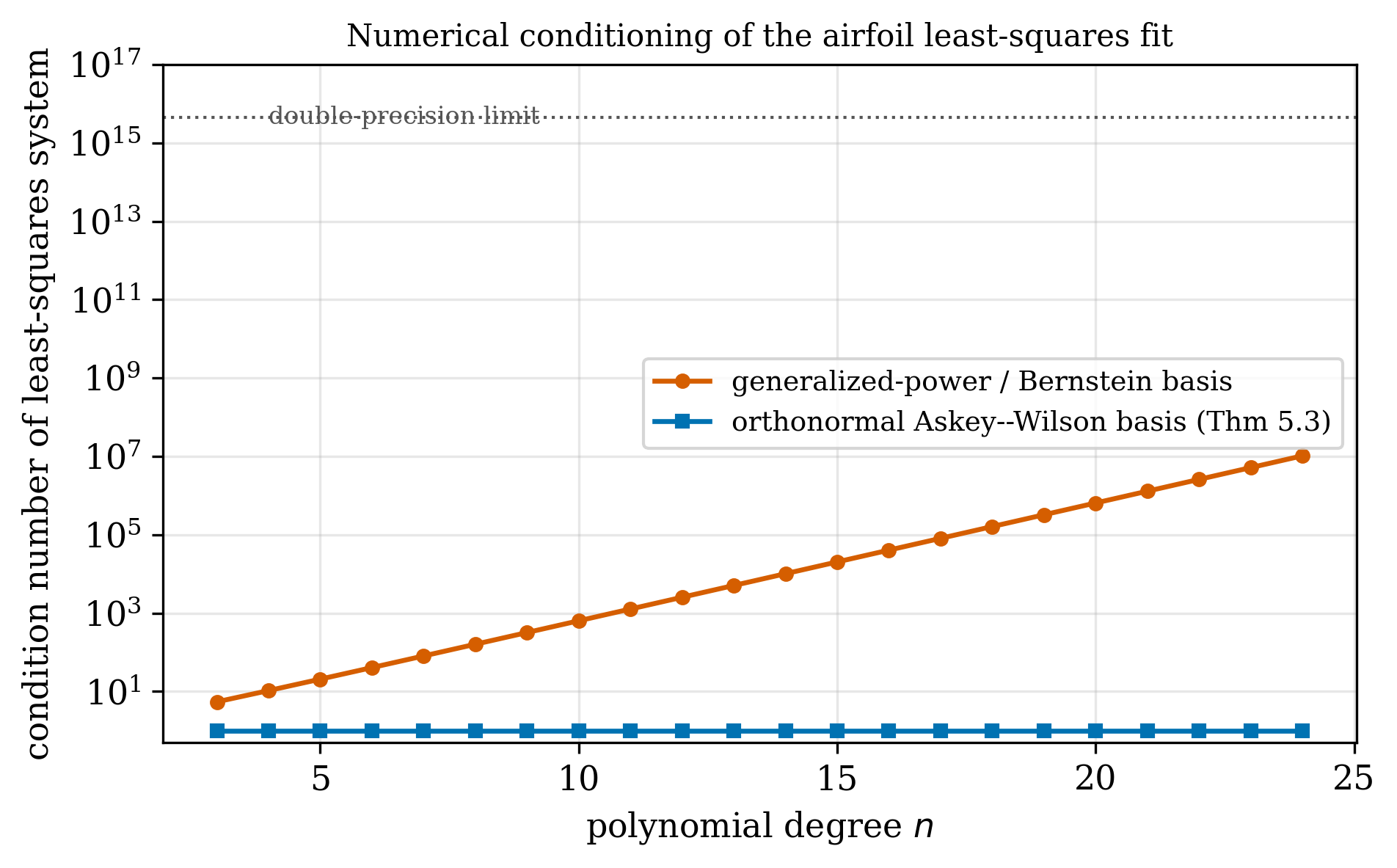}
\caption{Measured $2$-norm condition number of the least-squares design matrix: Bernstein
basis at cosine-spaced nodes (orange), orthonormal Askey--Wilson basis at the same nodes
(blue), and the weighted matrix $W^{1/2}V$ at Gauss--Askey--Wilson nodes (green). In the last
case the Gram matrix is the identity to the reported numerical precision.}
\label{fig:cond}
\end{figure}

\begin{table}[!htb]
\centering\small
\renewcommand{\arraystretch}{1.25}
\begin{tabular}{@{}lp{9.6cm}@{}}
\hline
Geometry & NACA 2412, displayed 4-digit formulas, closed trailing edge ($a_4=-0.1036$)\\
Surfaces & upper and lower ordinates fitted independently as functions of the NACA
construction parameter $x$; $S_{u,l}=y_{u,l}/C$ on $(0,1)$\\
Endpoint values & $S_{u,l}(0^{+})=\pm\sqrt{2r_{\mathrm{LE}}}=\pm0.1781$;
$S_{u,l}(1^{-})=-y_{u,l}'(1)$: $0.2117$ (upper), $-0.0784$ (lower), with derivatives taken
with respect to the construction parameter\\
Fit nodes & $M=400$ points $x_j=\tfrac12(1-\cos\theta_j)$, $\theta_j=j\pi/401$,
$j=1,\dots,400$\\
Variable map & $t_j=2x_j-1\in[-1,1]$ (chord to Askey--Wilson interval)\\
Error grid & $M'=5\,999$ points $x_j=\tfrac12(1-\cos(j\pi/6000))$, $j=1,\dots,5999$;
errors in units of chord $c$\\
Norms & vertical ordinate errors
$\max_j\big|y_{\mathrm{fit}}(x_j)-y_{\mathrm{exact}}(x_j)\big|$ and
$\mathrm{RMS}=\big(\tfrac1{M'}\sum_j\big|y_{\mathrm{fit}}(x_j)-y_{\mathrm{exact}}(x_j)\big|^{2}\big)^{1/2}$
over the error grid\\
AW parameters & $(a,b,c,d;q)=(0.3,\,0.2,\,0.15,\,0.1;\,0.6)$ for the conditioning study\\
 & $(a,b,c,d;q)=(\tfrac3{10},\tfrac15,\tfrac3{20},\tfrac1{10};\tfrac35)$ (exact) for $\kappa_2(A)$ and the conversion\\
Quadrature & Gauss--Askey--Wilson from the AW three-term recurrence (Golub--Welsch);
$N=n+1$ nodes in the weighted study; $N=n+2$ for the normalisations, one node beyond the
$N=n+1$ already sufficient for exactness of the squared norms\\
Solvers & SVD divide-and-conquer (LAPACK \texttt{DGELSD}) for the unweighted LS
problems; $V^{\mathsf T}W\mathbf s$ for the Gauss-node weighted coefficients; exact rational
arithmetic for the basis conversion\\
Precision & IEEE double, except exact-arithmetic statements\\
Software & Python 3.11.15, NumPy 2.4.4, SciPy 1.17.1, SymPy 1.14.0 (reference BLAS/LAPACK)\\
\hline
\end{tabular}
\caption{Reproducibility data for \S\ref{sec:apps}.}
\label{tab:repro}
\end{table}

\subsection{Assessment}
Within its stated scope the experiment supports three conclusions. The leading-edge square-root singularity produces the measured slow convergence of the
plain fit (an empirical $n^{-0.95}$ law over the tested range). The CST factorisation removes
the explicit endpoint singular factor; with ten polynomial coefficients, the upper surface
reaches the benchmark while the lower surface remains at $2.4\times10^{-4}c$.
Accordingly, ``about ten'' coefficients do not place both surfaces inside the
$10^{-4}c$ threshold; they place both within the same order of magnitude. The orthogonal Askey--Wilson representation
improves the conditioning of the same least-squares problem by orders of magnitude at equal
degree, and yields a condition number equal to one, to the stated numerical precision, for the
weighted design matrix in the distinct Gauss-node formulation. The
closed-form connection matrix of Theorem~\ref{thm:explicit} makes the conversion to
generalized-power coefficients exact in exact arithmetic, but it is numerically ill-conditioned, so conversions
should be done in extended precision when needed. What this section does \emph{not} claim:
that CST is an instance of the Askey--Wilson theory (it is not; the class function is outside
$\Pi_n$); that generalized-power coefficients possess the convex-hull interpretation of
classical B\'ezier control ordinates; that parameter-matched ordinate errors are geometric
distance errors; or that any measured constants carry over unchanged to other airfoils,
node distributions, weights or basis normalisations. For the tensor-product and rational (NURBS) surface representations and the
free-form-deformation lattices to which such fits feed in practice, see
\cite{PieglTiller1997,SederbergParry1986,Farin2002,Prautzsch2002}.

\section{Conclusion and outlook}\label{sec:concl}

We have set up the univariate theory of Bernstein-type bases on nonuniform lattices around a
structural dichotomy that is invisible on the linear and $q$-linear lattices. The
partition-of-unity property and the orthogonal-connection property, which rest on a single
falling-factorial basis in the classical and $q$-linear cases, are carried by \emph{two
different} bases on the $q$-quadratic lattice
(Theorem~\ref{thm:dichotomy}): the affine spectral basis \eqref{eq:bern} is the B\'ezier/CAGD
object (Propositions~\ref{prop:bezier}--\ref{prop:decasteljau}), while the first-order ladder fails in degree three for every $q\in(0,1)$, with an explicitly factorised determinant (Proposition~\ref{prop:ladder}), and the affine factors fail to satisfy the weight-shift mechanism of Proposition~\ref{prop:spectralnotOP}. The stronger nonorthogonality statement is Conjecture~\ref{conj:spectralnotOP}. By contrast, the generalized-power basis
\eqref{eq:genbern} fails to yield a nonnegative partition of unity in the positive
parameter region of Proposition~\ref{prop:obstruction}
(the unique unity-expanding normalisation $\pi_{n,k}$ changes sign there), but its
Askey--Wilson connection \emph{is}
orthogonal, with respect to a sign-changing quasi-definite weight
(Theorem~\ref{thm:explicit}, Remark~\ref{rem:orth}). Two mechanisms drive the result: the
weight-shift lemma (Lemma~\ref{lem:wshift}), by which the B\'ezier index enters the
coefficients only through $aq^{k},bq^{n-k}$ (Corollary~\ref{cor:closedform}); and the
bispectral tridiagonal action of the Askey--Wilson operator on the generalized-power basis,
computed here with fully explicit band coefficients \eqref{eq:dk}--\eqref{eq:bk}. The
connection coefficients are $q$-Racah polynomials
$R_m(\mu(k);\,ad/q,\,bc/q,\,q^{-n-1},\,a/b\,|\,q)$ times the fully explicit prefactors
\eqref{eq:prefactor}--\eqref{eq:omega}; the gauged difference equation coincides exactly,
after the parameter identification \eqref{eq:params}, with the standard $q$-Racah difference
equation of \cite[\S14.2]{KLS2010}, and every step of the proof is reduced to finite algebraic identities developed in Appendix~\ref{app:certificates}. The scaled limits of \S\ref{sec:limits} recover the Wilson/Racah connection and the $q$-Hahn polynomial part of the normalised $q$-linear connection coefficients.

Several questions remain open and seem worth pursuing: the extension of
Proposition~\ref{prop:ladder} to degrees $n\ge4$, formulated as
Conjecture~\ref{conj:ladder}; the resolution of Conjecture~\ref{conj:spectralnotOP}; the classification of parameter
regions where the quasi-definite orthogonality \eqref{eq:coeffOrth} is positive-definite, and
the indefinite (Pontryagin-type) theory outside them; a Leonard-pair/$AW(3)$ formulation of
the tridiagonal action established in Step~1; stable numerical evaluation of the connection
matrix (cf.\ \S\ref{sec:convstab}); and a fractional-index extension motivated by the CST
class function of \S\ref{sec:apps}. A multivariate extension through the nested-product
(Koornwinder) construction, defining multivariate generalized-power Bernstein polynomials
on the $q$-quadratic simplex and recovering the bivariate $q$-Hahn results of
\cite{LWAG2008}, is a natural further step, although the compatibility of nested anchors, the multivariate
measure and the truncation structure all require separate treatment.

\appendix

\section{Algebraic details for the closed-form identities}\label{app:certificates}

This appendix supplies the finite algebraic calculations invoked in the proof of Theorem~\ref{thm:explicit}. Each calculation follows from the explicitly displayed rational functions by evaluating products of linear factors, cancelling common factors, and applying identities of rational functions. Throughout, $K:=q^{k}$ is treated as an
indeterminate, so that validity of the resulting identities for all integers $0\le k\le n$ is
automatic, and Lemma~\ref{lem:denominators} guarantees that no denominator vanishes.

\subsection{The band coefficients}\label{app:residues}

\begin{proposition}\label{prop:residues}
With the notation of Step~\textup1, the residue quotients defining $\mathfrak d_k$ and
$\mathfrak b_k$ evaluate to \eqref{eq:dk} and \eqref{eq:bk}.
\end{proposition}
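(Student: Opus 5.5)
The plan is to evaluate both residue quotients directly from the explicit rational functions \eqref{eq:Lform}, \eqref{eq:Rquot} and \eqref{eq:Tquot}, working first on the generic parameter set fixed at the start of the proof of Theorem~\ref{thm:explicit} and then extending by rational continuation. I would treat $\mathfrak d_k$ in detail. Then $\mathfrak b_k$ would follow from the symmetry $a\leftrightarrow b$, $k\leftrightarrow n-k$. This symmetry exchanges $\Phi_k$ with $\Phi_{n-k}|_{a\leftrightarrow b}$, $T_+$ with $T_-$, and the pole $bq^{n-k-1}$ with $aq^{k-1}$, while leaving $\mathcal L$ invariant.

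\emph{Localising the pole.} Put $z_0:=aq^{k-1}$. For generic parameters $G(1/z)$ is regular at $z_0$, since its candidate poles are the reciprocals of those listed in Step~1. Hence $\operatorname{Res}_{z_0}H=\operatorname{Res}_{z_0}G$. Moreover, $A$ is regular at $z_0$ and the only singular factor of $R$ there is $(1-aq^{k-1}z^{-1})^{-1}$, so the term $-A(z)$ in $G$ contributes nothing. I would write
\[
R(z)=\frac{N_R(z)}{1-z_0/z},\qquad T_-(z)=\frac{N_T(z)}{1-z_0/z},
\]
with $N_R,N_T$ regular at $z_0$. Both residues then carry the same factor $\operatorname{Res}_{z_0}(1-z_0/z)^{-1}=z_0$, and the quotient collapses to
\[
\mathfrak d_k=\frac{A(z_0)\,N_R(z_0)}{N_T(z_0)}.
\]
This removes any worry about the sign or normalisation of the residues: only values of regular functions at $z_0$ remain.

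\emph{Evaluation and cancellation.} Next I would substitute $z_0=aq^{k-1}$ into the surviving linear factors. Several cancellations occur.
\begin{itemize}
\item The factors $(1-az_0)(1-bz_0)$ in the numerator of $A$ cancel the corresponding denominators of $N_R$, exactly as in mechanism (a) of Step~1.
\item The factor $(1-bq^{n-k}z_0)$ appears in both $N_R$ and $N_T$ and cancels.
\item The factor $(1-aq^{k-1}z_0)=1-a^2q^{2k-2}$, from the denominator of $T_-$, moves to the numerator and cancels $1-z_0^2$.
\item The factor $1-aq^{k}z_0=1-a^2q^{2k-1}$ from $N_R$ cancels $1-qz_0^2$.
\end{itemize}
What remains is
\[
\mathfrak d_k=\frac{(1-acq^{k-1})(1-adq^{k-1})(1-q^{-k})\big(1-\tfrac ba q^{-k}\big)}{\big(1-\tfrac ba q^{n-2k}\big)\big(1-\tfrac ba q^{n-2k+1}\big)}.
\]
Here $(1-q^{-k})$ comes from $1-\tfrac aq z_0^{-1}$, and $1-\tfrac ba q^{-k}$ comes from $1-\tfrac bq z_0^{-1}$. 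Finally I would convert each factor with a negative exponent using $1-uq^{-t}=-uq^{-t}(1-u^{-1}q^{t})$. This turns the numerator factors into $(1-q^k)(1-\tfrac ab q^k)$ and the denominators into $(1-\tfrac ab q^{2k-n})(1-\tfrac ab q^{2k-n-1})$. Collecting the four monomial prefactors should give exactly $\tfrac ab q^{2k-2n-1}$, which is \eqref{eq:dk}.

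The main obstacle is pure bookkeeping: tracking which of the eight factors of $R$ and four of $T_-$ are singular, cancel, or survive, and getting the signs and powers in the final monomial right. I would check the prefactor at a small case such as $k=1$, $n=2$ to guard against an off-by-one in the exponent. A final point to record is that the boundary values $\mathfrak d_0=0$ and $\mathfrak b_n=0$ are automatic from the factors $(1-q^{k})$ and $(1-q^{n-k})$. This is consistent with $T_-$ having no pole when $k=0$, since then $\Phi_{-1}=0$.
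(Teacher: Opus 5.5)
Your proposal is correct and follows the paper's proof essentially step for step. It localises at $z_0=aq^{k-1}$ and reduces the residue quotient to $A(z_0)N_R(z_0)/N_T(z_0)$, which is the paper's quotient of the limits of $(1-z_0/z)G$ and $(1-z_0/z)T_-$; it then makes the same cancellations and negative-exponent conversions to reach the unit $\tfrac ab q^{2k-2n-1}$, and uses the $(a,k)\leftrightarrow(b,n-k)$ symmetry for $\mathfrak b_k$.

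One small correction: for $k=1$ the point $z_0=a$ is the reciprocal of the listed candidate $1/a$ for every choice of parameters, so genericity alone does not make $G(1/z)$ regular there. Regularity instead comes from mechanism (a): the factor $1-a/z$ of $A(1/z)$ cancels the corresponding denominator of $R(1/z)$. The same applies at $z=b$ for the $\mathfrak b_k$ pole when $k=n-1$.
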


\begin{proof}
We treat $\mathfrak d_k$; the case of $\mathfrak b_k$ follows from the symmetry
$\Phi_k(z;a,b)=\Phi_{n-k}(z;b,a)$, which exchanges $T_+\leftrightarrow T_-$ and the two pole
pairs, and amounts to the substitution $(a,k)\leftrightarrow(b,n-k)$ in the final formula.

Write $z_0:=aq^{k-1}$. Among the four denominator factors of $R(z)$ in \eqref{eq:Rquot},
only $1-aq^{k-1}z^{-1}$ vanishes at $z_0$; likewise, among the denominator factors of
$T_-(z)$ in \eqref{eq:Tquot}, only $1-aq^{k-1}z^{-1}$ vanishes there. Both poles being
simple, the residue quotient equals the quotient of the two limits
$\lim_{z\to z_0}(1-z_0/z)G(z)$ and $\lim_{z\to z_0}(1-z_0/z)T_-(z)$, namely
\[
\mathfrak d_k
=A(z_0)\,
\frac{(1-aq^{k}z_0)\big(1-\tfrac aq z_0^{-1}\big)(1-bq^{\,n-k}z_0)\big(1-\tfrac bq z_0^{-1}\big)}
{(1-az_0)(1-bz_0)\big(1-bq^{\,n-k-1}z_0^{-1}\big)}
\cdot
\frac{1-aq^{k-1}z_0}{(1-bq^{\,n-k}z_0)\big(1-bq^{\,n-k}z_0^{-1}\big)} .
\]
Substituting $A(z_0)=\dfrac{(1-az_0)(1-bz_0)(1-cz_0)(1-dz_0)}{(1-z_0^{2})(1-qz_0^{2})}$,
the factors $(1-az_0)(1-bz_0)$ and $(1-bq^{\,n-k}z_0)$ cancel, and with $z_0=aq^{k-1}$ the
remaining factors evaluate as
\begin{gather*}
1-cz_0=1-acq^{k-1},\qquad 1-dz_0=1-adq^{k-1},\\
1-\tfrac aq z_0^{-1}=1-q^{-k},\qquad 1-\tfrac bq z_0^{-1}=1-\tfrac ba q^{-k},\\
1-aq^{k}z_0=1-a^{2}q^{2k-1},\qquad 1-aq^{k-1}z_0=1-a^{2}q^{2k-2},\\
1-z_0^{2}=1-a^{2}q^{2k-2},\qquad 1-qz_0^{2}=1-a^{2}q^{2k-1},\\
1-bq^{\,n-k-1}z_0^{-1}=1-\tfrac ba q^{\,n-2k},\qquad
1-bq^{\,n-k}z_0^{-1}=1-\tfrac ba q^{\,n-2k+1}.
\end{gather*}
The factors $1-a^{2}q^{2k-1}$ and $1-a^{2}q^{2k-2}$ cancel in pairs between numerator and
denominator, leaving
\[
\mathfrak d_k=
\frac{(1-q^{-k})\big(1-\tfrac ba q^{-k}\big)(1-acq^{k-1})(1-adq^{k-1})}
{\big(1-\tfrac ba q^{\,n-2k}\big)\big(1-\tfrac ba q^{\,n-2k+1}\big)} .
\]
{Finally we convert the four factors with negative exponents by the elementary rule
$1-uq^{-t}=-uq^{-t}\big(1-u^{-1}q^{\,t}\big)$, applied once to each:
\begin{align*}
1-q^{-k}&=-q^{-k}\big(1-q^{\,k}\big), &
1-\tfrac ba q^{-k}&=-\tfrac ba q^{-k}\big(1-\tfrac ab q^{\,k}\big),\\
1-\tfrac ba q^{\,n-2k}&=-\tfrac ba q^{\,n-2k}\big(1-\tfrac ab q^{\,2k-n}\big), &
1-\tfrac ba q^{\,n-2k+1}&=-\tfrac ba q^{\,n-2k+1}\big(1-\tfrac ab q^{\,2k-n-1}\big).
\end{align*}
The four sign factors $(-1)$ cancel in pairs between numerator and denominator, and the
accumulated unit is
\[
\frac{q^{-k}\cdot\tfrac ba q^{-k}}
{\tfrac ba q^{\,n-2k}\cdot\tfrac ba q^{\,n-2k+1}}
=\frac ab\,q^{\,2k-2n-1},
\]
which, together with the four converted positive-exponent factors, is exactly
\eqref{eq:dk}. As for $\mathfrak b_k$, the symmetry
$\Phi_k(z;a,b)=\Phi_{n-k}(z;b,a)$, combined with the invariance of $A(z)$ under
$a\leftrightarrow b$, gives $H(z;a,b,k)=H(z;b,a,n-k)$ and
$T_{+}(z;a,b,k)=T_{-}(z;b,a,n-k)$; consequently
$\mathfrak b_k(a,b)=\mathfrak d_{n-k}(b,a)$, and applying the substitution
$(a,k)\mapsto(b,n-k)$, $(b,n-k)\mapsto(a,k)$ to \eqref{eq:dk}, under which
$q^{\,2k-2n-1}\mapsto q^{-2k-1}$, $1-q^{\,k}\mapsto1-q^{\,n-k}$,
$1-\tfrac ab q^{\,k}\mapsto1-\tfrac ba q^{\,n-k}$,
$1-acq^{\,k-1}\mapsto1-bcq^{\,n-k-1}$, $1-adq^{\,k-1}\mapsto1-bdq^{\,n-k-1}$ and the
denominator pair maps to
$\big(1-\tfrac ba q^{\,n-2k-1}\big)\big(1-\tfrac ba q^{\,n-2k}\big)$, yields
\eqref{eq:bk}.}

For the values at infinity recorded in \eqref{eq:infvalues}: from \eqref{eq:Lform} and
\eqref{eq:Rquot} one reads off $A(\infty)=abcd/q$, $A(0)=1$, $R(\infty)=q^{\,n}$ and
$R(0)=q^{-n}$ (in each case the four linear factors contribute their leading coefficients),
so that $H(\infty)=A(\infty)(R(\infty)-1)+A(0)(R(0)-1)$ is the stated value; the limits of
$T_{\pm}$ are immediate from \eqref{eq:Tquot}.
\end{proof}

\subsection{The gauged band}\label{app:BD}

\begin{proposition}\label{prop:BD}
With $\rho_k$ as in \eqref{eq:rho}, the products \eqref{eq:BDdef} simplify to
\eqref{eq:Bq} and \eqref{eq:Dq}.
\end{proposition}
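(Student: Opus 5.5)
The claim is a finite identity of rational functions, so the plan is direct substitution and factor bookkeeping. Throughout, $K:=q^{k}$ is treated as an indeterminate. For $B(k)$ we multiply $\mathfrak d_{k+1}$, obtained from \eqref{eq:dk} with $k\mapsto k+1$, by $\rho_k$ from \eqref{eq:rho}. For $D(k)$ we divide $\mathfrak b_{k-1}$, obtained from \eqref{eq:bk} with $k\mapsto k-1$, by $\rho_{k-1}$.

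\textbf{Step 1: translate the target.} First rewrite the target formulas \eqref{eq:Bq}--\eqref{eq:Dq} in the original parameters using \eqref{eq:params}. The substitutions are
\[
\alpha q=ad,\qquad \beta\delta q=ac,\qquad \gamma q^{k+1}=q^{k-n},\qquad \gamma\delta q^{j}=\tfrac ab q^{\,j-n-1}.
\]
With these, the numerator of $B(k)$ becomes
\[
(1-adq^{k})(1-acq^{k})(1-q^{k-n})\big(1-\tfrac ab q^{k-n}\big),
\]
and its denominator becomes $\big(1-\tfrac ab q^{2k-n}\big)\big(1-\tfrac ab q^{2k-n+1}\big)$.

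\textbf{Step 2: the product $B(k)=\mathfrak d_{k+1}\rho_k$.} The factors of $\mathfrak d_{k+1}$ are
\[
\tfrac ab q^{2k-2n+1},\quad (1-q^{k+1}),\quad \big(1-\tfrac ab q^{k+1}\big),\quad (1-acq^{k}),\quad (1-adq^{k}),
\]
over the denominator $\big(1-\tfrac ab q^{2k-n+1}\big)\big(1-\tfrac ab q^{2k-n+2}\big)$.

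Multiplying by $\rho_k$ produces three cancellations:
\begin{itemize}
\item $(1-q^{k+1})$ cancels against the denominator of $\rho_k$;
\item $\big(1-\tfrac ab q^{k+1}\big)$ cancels likewise;
\item $\big(1-\tfrac ab q^{2k+2-n}\big)$ cancels between the numerator of $\rho_k$ and the denominator of $\mathfrak d_{k+1}$.
\end{itemize}
What remains is the scalar $q^{-1}\tfrac ab q^{2k-2n+1}$ times
\[
\frac{(1-q^{n-k})\big(1-\tfrac ba q^{n-k}\big)(1-acq^k)(1-adq^k)}{\big(1-\tfrac ab q^{2k-n}\big)\big(1-\tfrac ab q^{2k-n+1}\big)}.
\]
Now apply the reflection rule $1-uq^{t}=-uq^{t}\big(1-u^{-1}q^{-t}\big)$ to convert $(1-q^{n-k})$ and $\big(1-\tfrac ba q^{n-k}\big)$ into $(1-q^{k-n})$ and $\big(1-\tfrac ab q^{k-n}\big)$. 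This produces the unit $q^{n-k}\cdot\tfrac ba q^{n-k}$, and the two signs cancel. This unit exactly absorbs the scalar $\tfrac ab q^{2k-2n}$, giving \eqref{eq:Bq}.

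\textbf{Step 3: the quotient $D(k)=\mathfrak b_{k-1}/\rho_{k-1}$.} The factors of $\mathfrak b_{k-1}$ are
\[
\tfrac ba q^{-2k+1},\quad (1-q^{n-k+1}),\quad \big(1-\tfrac ba q^{n-k+1}\big),\quad (1-bcq^{n-k}),\quad (1-bdq^{n-k}),
\]
over the denominator $\big(1-\tfrac ba q^{n-2k+2}\big)\big(1-\tfrac ba q^{n-2k+1}\big)$.

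Dividing by $\rho_{k-1}$ cancels $(1-q^{n-k+1})$ and $\big(1-\tfrac ba q^{n-k+1}\big)$. It introduces $q(1-q^{k})\big(1-\tfrac ab q^{k}\big)\big(1-\tfrac ab q^{2k-2-n}\big)$ in the numerator and $\big(1-\tfrac ab q^{2k-n}\big)$ in the denominator.

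Next, reflect the two $\tfrac ba$ denominator factors into $\big(1-\tfrac ab q^{2k-n-2}\big)\big(1-\tfrac ab q^{2k-n-1}\big)$. The first of these cancels the introduced numerator factor $\big(1-\tfrac ab q^{2k-2-n}\big)$. The surviving denominator is $\big(1-\tfrac ab q^{2k-n-1}\big)\big(1-\tfrac ab q^{2k-n}\big)$, which matches $(1-\gamma\delta q^{2k})(1-\gamma\delta q^{2k+1})$ in \eqref{eq:Dq}.

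\textbf{Step 4: the numerator of $D(k)$.} Finally, identify
\[
\beta-\gamma q^{k}=\tfrac{bc}{q}\big(1-\tfrac{q^{k-n}}{bc}\big)=-\tfrac{q^{k-n-1}}{1}\,(1-bcq^{n-k}),
\]
and similarly $\alpha-\gamma\delta q^{k}$ is a multiple of $(1-bdq^{n-k})$ with scalar $-\tfrac ab q^{k-n-1}$. Then collect all monomial scalars and check that they reduce to the single factor $q$ in \eqref{eq:Dq}.

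\textbf{Main obstacle.} The only delicate point is the bookkeeping of powers of $q$ and of $a/b$ in the reflection steps. I would verify it by comparing the leading behaviour as $K\to\infty$ on both sides, after confirming that the non-monomial factors match.
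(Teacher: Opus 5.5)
Your proposal is correct and follows essentially the same route as the paper: shift the index in \eqref{eq:dk} and \eqref{eq:bk}, cancel the common linear factors against $\rho_k$ (resp.\ $\rho_{k-1}$), and convert the remaining factors by $1-uq^{t}=-uq^{t}(1-u^{-1}q^{-t})$ while tracking the accumulated unit. For $D(k)$ you supply details the paper only sketches, and the scalar bookkeeping you defer does close: both your reduced expression and \eqref{eq:Dq} carry the unit $\tfrac ab q^{2k-2n-1}$ in front of $(1-q^{k})(1-\tfrac ab q^{k})(1-bcq^{n-k})(1-bdq^{n-k})/\big[(1-\tfrac ab q^{2k-n-1})(1-\tfrac ab q^{2k-n})\big]$.
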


\begin{proof}
Again we display the algebraic reduction for $B(k)=\mathfrak d_{k+1}\rho_k$; the reduction of $D(k)$ has the same structure. Substituting $k+1$ for $k$ in \eqref{eq:dk},
\[
\mathfrak d_{k+1}=\frac ab\,q^{\,2k-2n+1}\,
\frac{(1-q^{\,k+1})\big(1-\tfrac ab q^{\,k+1}\big)(1-acq^{\,k})(1-adq^{\,k})}
{\big(1-\tfrac ab q^{\,2k-n+1}\big)\big(1-\tfrac ab q^{\,2k-n+2}\big)} .
\]
Multiplying by \eqref{eq:rho}, the factors $1-q^{\,k+1}$, $1-\tfrac ab q^{\,k+1}$ and
$1-\tfrac ab q^{\,2k+2-n}$ cancel, and there remains
\[
B(k)=\frac ab\,q^{\,2k-2n}\,
\frac{(1-acq^{\,k})(1-adq^{\,k})(1-q^{\,n-k})\big(1-\tfrac ba q^{\,n-k}\big)}
{\big(1-\tfrac ab q^{\,2k-n+1}\big)\big(1-\tfrac ab q^{\,2k-n}\big)} .
\]
It remains to translate into the parameters \eqref{eq:params}. One has
\begin{gather*}
1-acq^{\,k}=1-\beta\delta q^{\,k+1},\qquad 1-adq^{\,k}=1-\alpha q^{\,k+1},\\
1-q^{\,n-k}=-q^{\,n-k}\big(1-\gamma q^{\,k+1}\big),\qquad
1-\tfrac ba q^{\,n-k}=-\tfrac ba q^{\,n-k}\big(1-\gamma\delta q^{\,k+1}\big),
\end{gather*}
while the denominators are $1-\gamma\delta q^{\,2k+2}$ and $1-\gamma\delta q^{\,2k+1}$. The accumulated unit is
$\tfrac ab\,q^{\,2k-2n}\cdot q^{\,n-k}\cdot\tfrac ba q^{\,n-k}=1$, and \eqref{eq:Bq}
follows. The same procedure applied to $\mathfrak b_{k-1}/\rho_{k-1}$ yields, after the
analogous cancellations and the conversions
$\beta-\gamma q^{\,k}=\beta\big(1-\tfrac{q^{\,k-n}}{bc}\big)$ and
$\alpha-\gamma\delta q^{\,k}=\alpha\big(1-\tfrac{q^{\,k-n}}{bd}\big)$, the expression
\eqref{eq:Dq}.
\end{proof}

\subsection{The diagonal identity}\label{app:diag}

\begin{proposition}\label{prop:diag}
For $0\le k\le n$ one has $\mathfrak a_k=-\big(B(k)+D(k)\big)$.
\end{proposition}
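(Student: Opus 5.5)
The plan is a direct verification. By Step~1, the diagonal entry is
\[
\mathfrak a_k=H(\infty)-\mathfrak b_k\,T_+(\infty)-\mathfrak d_k\,T_-(\infty).
\]
The three values at infinity are recorded in \eqref{eq:infvalues}, and $\mathfrak b_k,\mathfrak d_k$ are given in closed form by \eqref{eq:dk}--\eqref{eq:bk}. So both sides of the claimed identity are explicit rational functions of the indeterminate $K=q^{k}$, with $a,b,c,d,q,q^{n}$ as coefficients. I would prove that
\[
F(K):=\mathfrak a_k+B(k)+D(k)
\]
vanishes identically as a rational function of $K$. The identity for integers $0\le k\le n$ then follows by specialisation, and Lemma~\ref{lem:denominators} guarantees that no denominator vanishes there.

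The first step is to put everything over the common denominator set. Using $\gamma\delta q^{2k+i}=\tfrac ab q^{2k-n-1+i}$, the denominators of $B(k)$ and $D(k)$ are the factors $1-\tfrac ab q^{-n-1+i}K^{2}$ for $i=0,1,2$. The term $\mathfrak d_kT_-(\infty)$ has denominator $(1-\tfrac ab q^{2k-n-1})(1-\tfrac ab q^{2k-n})$, which covers $i=0,1$. For $\mathfrak b_kT_+(\infty)$, I would convert via $1-\tfrac ba q^{t}=-\tfrac ba q^{t}(1-\tfrac ab q^{-t})$; its denominator then becomes the pair $i=1,2$. Hence $F$ is a rational function of $K$ whose only possible finite poles are simple poles at the six points $K^{2}=\tfrac ba q^{\,n+1-i}$ ($i=0,1,2$), plus possibly at $K=0$, coming from the monomial prefactors $q^{\pm 2k}$. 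The second step is to check behaviour at $K\to0$ and $K\to\infty$. I expect each of $\mathfrak d_kT_-(\infty)$ and $D(k)$ to be bounded as $K\to0$, with the negative powers of $K$ cancelling inside the products. Comparing leading coefficients at $K=\infty$ and $K=0$ should show that $F(\infty)=0$, with $H(\infty)$ supplying exactly the constant $\tfrac{abcd}{q}(q^{n}-1)+(q^{-n}-1)$ needed, and that $F$ is regular at $0$.

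The third and main step is the residue check at the six poles. At each point $K^{2}=\tfrac ba q^{n+1-i}$ exactly two of the four terms contribute, and I would show their residues cancel.
\begin{itemize}
\item At $i=0$ the contributors are $\mathfrak d_kT_-(\infty)$ and $D(k)$.
\item At $i=2$ they are $\mathfrak b_kT_+(\infty)$ and $B(k)$.
\item At $i=1$ all four terms can contribute.
\end{itemize}
The evaluation is made easier by the definitions $B(k)=\mathfrak d_{k+1}\rho_k$ and $D(k)=\mathfrak b_{k-1}/\rho_{k-1}$ (Proposition~\ref{prop:BD}): at a pole, the numerator of $\rho_k$ or $\rho_{k-1}$ contributes a factor $1-\tfrac ab q^{2k+2-n}$ or $1-\tfrac ab q^{2k-n}$ that can be evaluated directly. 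I expect the $i=1$ point to be the main obstacle, since four residues must sum to zero there. If the bookkeeping becomes unwieldy, I would first try the symmetry $(a,k)\leftrightarrow(b,n-k)$. This symmetry exchanges $\mathfrak b\leftrightarrow\mathfrak d$ and $T_+\leftrightarrow T_-$, it should exchange $B\leftrightarrow D$ up to the gauge, and it fixes the middle pole; this would reduce the $i=1$ case to the vanishing of a single symmetrised expression. Once all residues vanish and $F(\infty)=0$, $F$ is a pole-free rational function on the Riemann sphere with value $0$ at infinity, hence $F\equiv0$, which proves Proposition~\ref{prop:diag}.

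As an independent sanity check I would use the case $m=0$. Since $\lambda_0=0$ and $\widehat A_k(n,0)=c_0\widehat c_k$ with $R_0\equiv1$, the recurrence \eqref{eq:qRacahdiff} forces $\mathfrak a_k=-(B(k)+D(k))$ for every $k$, provided one already knows $\mathcal L1=0$ and that $\widehat A_k(n,0)$ is proportional to $\widehat c_k$. That proportionality is circular here, so I would use this only to catch sign or constant errors in the residue computation, not as the proof.
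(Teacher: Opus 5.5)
Your plan is essentially the paper's proof. The paper also treats $\mathfrak b_kT_+(\infty)+\mathfrak d_kT_-(\infty)-H(\infty)-B(k)-D(k)=-\big(\mathfrak a_k+B(k)+D(k)\big)$ as a rational function of $K=q^{k}$, checks that it vanishes as $K\to\infty$, and shows that the residues cancel at the six simple poles $K^{2}\in\{\tfrac ba q^{\,n-1},\tfrac ba q^{\,n},\tfrac ba q^{\,n+1}\}$, with exactly the contributing terms you list. The paper computes the residues you defer by rewriting negative powers of $K_0$ through the pole relation. At the middle locus the cancellation is pairwise ($\mathfrak b_kT_+(\infty)$ against $\mathfrak d_kT_-(\infty)$, and $B$ against $D$), so your symmetry fallback is not needed.
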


\begin{proof}
By Step~\textup1 and \eqref{eq:infvalues},
\[
-\mathfrak a_k=\mathfrak b_k\,T_+(\infty)+\mathfrak d_k\,T_-(\infty)-H(\infty)
=\beta_\ast(K)+\delta_\ast(K)-H(\infty),
\]
where, multiplying \eqref{eq:bk} and \eqref{eq:dk} by the limits \eqref{eq:infvalues} and
writing $K=q^{k}$,
\[
\beta_\ast(K)=q^{-n}\,
\frac{(1-q^{\,n-k})\big(1-\tfrac ba q^{\,n-k}\big)(1-bcq^{\,n-k-1})(1-bdq^{\,n-k-1})}
{\big(1-\tfrac ba q^{\,n-2k}\big)\big(1-\tfrac ba q^{\,n-2k-1}\big)},
\]
\[
\delta_\ast(K)=q^{-n}\,
\frac{(1-q^{\,k})\big(1-\tfrac ab q^{\,k}\big)(1-acq^{\,k-1})(1-adq^{\,k-1})}
{\big(1-\tfrac ab q^{\,2k-n-1}\big)\big(1-\tfrac ab q^{\,2k-n}\big)} .
\]
Consider therefore
\[
\Theta(K):=\beta_\ast(K)+\delta_\ast(K)-H(\infty)-B(K)-D(K),
\]
a rational function of $K$. {Written in the variable $K$, the six candidate poles are the
two square roots $K_0=\pm K_\ast$ of each of the three values
$K_\ast^{2}\in\big\{\tfrac ba q^{\,n-1},\ \tfrac ba q^{\,n},\ \tfrac ba q^{\,n+1}\big\}$;
under \textup{(H)} they are simple, the first value occurring in $\beta_\ast$ and in $B$,
the third in $\delta_\ast$ and in $D$, and the middle value in all four terms. For each of the
three pole loci, the residue identity below is derived using only the defining quadratic
relation for $K_0$, so the same computation applies verbatim to both choices of square
root. Two elementary observations recur. First, at a common
zero $K_0$ of a pair of factors of the two shapes $1-\tfrac ba q^{\,s}K^{-2}$ and
$1-\tfrac ab q^{-s}K^{2}$, the linearisations are
$\tfrac2{K_0}(K-K_0)$ and $-\tfrac2{K_0}(K-K_0)$ respectively, so the residue of a term is
$\pm\tfrac{K_0}2$ times the remaining factors evaluated at $K_0$, with the sign determined
by which shape carries the pole. Second, on the pole locus one may substitute
$K_0^{-1}=\tfrac ab q^{-s}K_0$, which converts every factor with a negative power of $K_0$
into one with a positive power.

\emph{The pole $K^{2}=\tfrac ba q^{\,n-1}$.} Here the vanishing factors are
$1-\tfrac ba q^{\,n-1}K^{-2}$ in $\beta_\ast$ and $1-\tfrac ab q^{\,1-n}K^{2}$ in $B$, of
opposite shapes, so the residues of $\beta_\ast$ and of $-B$ in $\Theta$ add to
$\tfrac{K_0}2\big([\beta_\ast]'+[B]'\big)$, where the primes denote the remaining factors at
$K_0$. Substituting $K_0^{-1}=\tfrac ab q^{\,1-n}K_0$ into the four numerator factors of
$\beta_\ast$ gives
\begin{gather*}
1-\tfrac{q^{\,n}}{K_0}=1-\tfrac ab qK_0,\qquad
1-\tfrac ba\tfrac{q^{\,n}}{K_0}=1-qK_0,\\
1-\tfrac{bcq^{\,n-1}}{K_0}=1-acK_0,\qquad
1-\tfrac{bdq^{\,n-1}}{K_0}=1-adK_0,
\end{gather*}
while the surviving denominators are $1-\tfrac ba q^{\,n}K_0^{-2}=1-q$ in $\beta_\ast$ and
$1-\tfrac ab q^{-n}K_0^{2}=1-q^{-1}=-q^{-1}(1-q)$ in $B$. Hence
\begin{gather*}
[\beta_\ast]'=\frac{q^{-n}\big(1-\tfrac ab qK_0\big)(1-qK_0)(1-acK_0)(1-adK_0)}{1-q},\\
[B]'=-\,\frac{q\,(1-adK_0)(1-acK_0)\big(1-q^{-n}K_0\big)\big(1-\tfrac ab q^{-n}K_0\big)}{1-q},
\end{gather*}
and after cancelling $(1-acK_0)(1-adK_0)$ the required identity
$[\beta_\ast]'+[B]'=0$ reduces to
\[
q^{-n}\big(1-\tfrac ab qK_0\big)(1-qK_0)
=q\big(1-q^{-n}K_0\big)\big(1-\tfrac ab q^{-n}K_0\big).
\]
Expanding both sides and using $K_0^{2}=\tfrac ba q^{\,n-1}$, each side equals
$q^{-n}+q-q^{\,1-n}\big(1+\tfrac ab\big)K_0$, which proves the cancellation.

\emph{The pole $K^{2}=\tfrac ba q^{\,n+1}$.} Only $\delta_\ast$ and $D$ are singular, both
through the factor $1-\tfrac ab q^{-n-1}K^{2}$, of the same shape, so the residues of
$\delta_\ast$ and of $-D$ cancel precisely when $[\delta_\ast]'=[D]'$. The surviving
denominators both equal $1-\tfrac ab q^{-n}K_0^{2}=1-q$, and the factors
$(1-K_0)\big(1-\tfrac ab K_0\big)$ are common, so the identity reduces to
\[
q^{-n}\Big(1-\tfrac{ac}{q}K_0\Big)\Big(1-\tfrac{ad}{q}K_0\Big)
=q\Big(\tfrac{bc}q-q^{-n-1}K_0\Big)\Big(\tfrac{ad}q-\tfrac ab q^{-n-1}K_0\Big).
\]
Converting the two right-hand factors by
$1-q^{-n}K_0/(bc)=-\,\tfrac{q^{-n}K_0}{bc}\big(1-\tfrac{ac}qK_0\big)$ and
$1-q^{-n}K_0/(bd)=-\,\tfrac{q^{-n}K_0}{bd}\big(1-\tfrac{ad}qK_0\big)$, both consequences of
$K_0^{-1}=\tfrac ab q^{-n-1}K_0$, the right-hand side becomes
$\tfrac{abcd}q\cdot\tfrac{q^{-2n}K_0^{2}}{b^{2}cd}\big(1-\tfrac{ac}qK_0\big)
\big(1-\tfrac{ad}qK_0\big)=q^{-n}\big(1-\tfrac{ac}qK_0\big)\big(1-\tfrac{ad}qK_0\big)$,
using $K_0^{2}=\tfrac ba q^{\,n+1}$, which is the left-hand side.

\emph{The shared pole $K^{2}=\tfrac ba q^{\,n}$.} All four terms are singular:
$\beta_\ast$ through $1-\tfrac ba q^{\,n}K^{-2}$ and $\delta_\ast$, $B$, $D$ through
$1-\tfrac ab q^{-n}K^{2}$. By the sign rule above, the residue of $\Theta$ is
$\tfrac{K_0}2\big([\beta_\ast]'-[\delta_\ast]'+[B]'+[D]'\big)$. Substituting
$K_0^{-1}=\tfrac ab q^{-n}K_0$ into the numerator factors of $\beta_\ast$ yields
$1-\tfrac{q^{\,n}}{K_0}=1-\tfrac ab K_0$, $1-\tfrac ba\tfrac{q^{\,n}}{K_0}=1-K_0$,
$1-\tfrac{bcq^{\,n-1}}{K_0}=1-\tfrac{ac}qK_0$ and
$1-\tfrac{bdq^{\,n-1}}{K_0}=1-\tfrac{ad}qK_0$, while the surviving denominator is
$1-\tfrac ba q^{\,n-1}K_0^{-2}=1-q^{-1}$; comparison with $\delta_\ast$, whose surviving
denominator is $1-\tfrac ab q^{-n-1}K_0^{2}=1-q^{-1}$ as well, shows that
$[\beta_\ast]'=[\delta_\ast]'$, so the first pair cancels outright. For the second pair,
the same substitution converts the two rightmost numerator factors of $B$,
$1-q^{-n}K_0=-q^{-n}K_0\big(1-\tfrac ab K_0\big)$ and
$1-\tfrac ab q^{-n}K_0=-\tfrac ab q^{-n}K_0(1-K_0)$, giving
\[
[B]'=\frac{q^{-n}\,(1-adK_0)(1-acK_0)(1-K_0)\big(1-\tfrac ab K_0\big)}{1-q},
\]
while in $D$ one writes
$\tfrac{bc}q-q^{-n-1}K_0=\tfrac{bc}q\big(1-\tfrac{q^{-n}K_0}{bc}\big)$ and
$\tfrac{ad}q-\tfrac ab q^{-n-1}K_0=\tfrac{ad}q\big(1-\tfrac{q^{-n}K_0}{bd}\big)$ and applies
the conversions $1-\tfrac{q^{-n}K_0}{bc}=-\tfrac{q^{-n}K_0}{bc}(1-acK_0)$ and
$1-\tfrac{q^{-n}K_0}{bd}=-\tfrac{q^{-n}K_0}{bd}(1-adK_0)$, now consequences of
$K_0^{-1}=\tfrac ab q^{-n}K_0$; collecting the units, one finds
\[
[D]'=-\,\frac{q^{-n}\,(1-acK_0)(1-adK_0)(1-K_0)\big(1-\tfrac ab K_0\big)}{1-q}
=-[B]' .
\]
Hence the four residues cancel and $\Theta$ has no poles.} As $K\to\infty$, the four limits are read off from the displayed formulas,
\[
\beta_\ast\to q^{-n},\qquad \delta_\ast\to \frac{abcd\,q^{\,n}}{q},\qquad
B\to\frac{abcd}{q},\qquad D\to1,
\]
so that
$\Theta(\infty)=q^{-n}+\tfrac{abcd}q q^{\,n}-\big(\tfrac{abcd}q(q^{\,n}-1)+q^{-n}-1\big)
-\tfrac{abcd}q-1=0$. A pole-free rational function with limit zero at infinity vanishes
identically, which proves the assertion.
\end{proof}

\subsection{The telescoped gauge}\label{app:telescope}

\begin{proposition}\label{prop:telescope}
The product $\widehat c_k=\prod_{j=0}^{k-1}\rho_j$ equals \eqref{eq:chat}.
\end{proposition}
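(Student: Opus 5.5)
The plan is to compute $\widehat c_k=\prod_{j=0}^{k-1}\rho_j$ factor group by factor group, which is the telescoping already sketched in Step~6, and then match the result with \eqref{eq:chat} using only finite-product manipulations. Throughout, treat $K=q^k$ and $\tfrac ab$ as indeterminates, as in the rest of the appendix. Lemma~\ref{lem:denominators} guarantees that every factor divided by below is nonzero.

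\emph{Step 1 (telescoping).} Split \eqref{eq:rho} into its five groups and take the product over $0\le j\le k-1$:
\begin{itemize}
\item the constant gives $q^{-k}$;
\item $\prod_j(1-q^{n-j})/(1-q^{j+1})=\qbinom nk$;
\item $\prod_j(1-\tfrac ba q^{n-j})=\ph{\tfrac ba q^{n-k+1}}{k}$;
\item the ratio $(1-\tfrac ab q^{2j+2-n})/(1-\tfrac ab q^{2j-n})$ telescopes in steps of two to $(1-\tfrac ab q^{2k-n})/(1-\tfrac ab q^{-n})$;
\item $\prod_j(1-\tfrac ab q^{j+1})^{-1}=\ph{\tfrac ab q}{k}^{-1}$.
\end{itemize}
Hence
\[
\widehat c_k=q^{-k}\qbinom nk\,\frac{\ph{\tfrac ba q^{n-k+1}}{k}\,\bigl(1-\tfrac ab q^{2k-n}\bigr)}{\bigl(1-\tfrac ab q^{-n}\bigr)\,\ph{\tfrac ab q}{k}} .
\]

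\emph{Step 2 (negative exponents).} Apply $1-uq^{-t}=-uq^{-t}(1-u^{-1}q^{t})$ to numerator and denominator:
\[
\frac{1-\tfrac ab q^{2k-n}}{1-\tfrac ab q^{-n}}=q^{2k}\,\frac{1-\tfrac ba q^{n-2k}}{1-\tfrac ba q^{n}} .
\]
The signs and the factors $\tfrac ab$ cancel. Combined with $q^{-k}$ this leaves the overall power $q^{k}$ appearing in \eqref{eq:chat}.

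\emph{Step 3 (matching the shifted factorials).} The prefactor $\ph{ab}{n}$ in \eqref{eq:chat} cancels against the $\ph{ab}{n}$ in its denominator. Its remaining $b/a$-part is
\[
\frac{\ph{b/a}{n}}{\ph{b/a}{n-k}\bigl(1-\tfrac ba q^{n-k}\bigr)}=\frac{\ph{\tfrac ba q^{n-k}}{k}}{1-\tfrac ba q^{n-k}} .
\]
Shift the product from Step~1 by one index:
\[
\ph{\tfrac ba q^{n-k+1}}{k}=\ph{\tfrac ba q^{n-k}}{k}\,\frac{1-\tfrac ba q^{n}}{1-\tfrac ba q^{n-k}} .
\]
The factor $1-\tfrac ba q^n$ then cancels the denominator produced in Step~2. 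What remains is exactly $q^k\qbinom nk(1-\tfrac ba q^{n-2k})\,\ph{\tfrac ba q^{n-k}}{k}/\bigl[(1-\tfrac ba q^{n-k})\ph{\tfrac ab q}{k}\bigr]$, which is \eqref{eq:chat}.

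\emph{Main obstacle.} The only delicate point is bookkeeping in Step~3: recognising that the two "extra" linear factors, $1-\tfrac ba q^{n}$ from the conversion and $1-\tfrac ba q^{n-k}$ from the index shift, are precisely those that reconcile the shifted factorial $\ph{\tfrac ba q^{n-k+1}}{k}$ with the quotient $\ph{b/a}{n}/\ph{b/a}{n-k}$ in \eqref{eq:chat}. As a sanity check before writing it out, I would verify the cases $k=0$ (both sides equal $1$) and $k=1$ directly against $\rho_0$.
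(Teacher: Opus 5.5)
Your proposal is correct and follows the paper's proof essentially step for step. It uses the same five-group telescoping, the same conversion of $1-\tfrac ab q^{\,2k-n}$ and $1-\tfrac ab q^{-n}$ into $q^{2k}\bigl(1-\tfrac ba q^{\,n-2k}\bigr)/\bigl(1-\tfrac ba q^{\,n}\bigr)$, and the same reconciliation of $\ph{\tfrac ba q^{\,n-k+1}}{k}$ with $\ph{b/a}{n}/\ph{b/a}{n-k}$ through the splitting $\ph{b/a}{n}=\ph{b/a}{n-k}\ph{\tfrac ba q^{\,n-k}}{k}$; the only cosmetic difference is that you reduce \eqref{eq:chat} to the telescoped form, whereas the paper multiplies the telescoped form back up by $\ph{ab}{n}\ph{b/a}{n}$.
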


\begin{proof}
The five groupings displayed in Step~\textup6 telescope separately: the quotient of
$q$-numbers gives the $q$-binomial coefficient; the factors
$1-\tfrac ba q^{\,n-j}$ accumulate to $\ph{\tfrac ba q^{\,n-k+1}}{k}$; the two-step quotient
$\big(1-\tfrac ab q^{\,2j+2-n}\big)\big/\big(1-\tfrac ab q^{\,2j-n}\big)$ telescopes along
the even sublattice to
$\big(1-\tfrac ab q^{\,2k-n}\big)\big/\big(1-\tfrac ab q^{-n}\big)$; and the remaining two
groupings contribute $q^{-k}$ and $\ph{\tfrac ab q}{k}^{-1}$. Hence
\[
\widehat c_k=q^{-k}\,\qbinom nk\,
\frac{\ph{\tfrac ba q^{\,n-k+1}}{k}\,\big(1-\tfrac ab q^{\,2k-n}\big)}
{\big(1-\tfrac ab q^{-n}\big)\,\ph{\tfrac ab q}{k}} .
\]
Two elementary conversions finish the proof. First,
$1-\tfrac ab q^{\,2k-n}=-\tfrac ab q^{\,2k-n}\big(1-\tfrac ba q^{\,n-2k}\big)$ and
$1-\tfrac ab q^{-n}=-\tfrac ab q^{-n}\big(1-\tfrac ba q^{\,n}\big)$, so their quotient is
$q^{\,2k}\big(1-\tfrac ba q^{\,n-2k}\big)\big/\big(1-\tfrac ba q^{\,n}\big)$. Second, by
splitting $\ph{b/a}{n}=\ph{b/a}{n-k}\,\ph{\tfrac ba q^{\,n-k}}{k}$ one has
\[
\frac{\ph{\tfrac ba q^{\,n-k+1}}{k}}{\ph{b/a}{n}}
=\frac{1-\tfrac ba q^{\,n}}{\big(1-\tfrac ba q^{\,n-k}\big)\,\ph{b/a}{n-k}} .
\]
Combining the two displays and multiplying by $\ph{ab}{n}\ph{b/a}{n}$ yields exactly
\eqref{eq:chat}.
\end{proof}


\begin{thebibliography}{99}
\small

\bibitem{AskeyWilson1979} R.~Askey, J.~Wilson,
\emph{A set of orthogonal polynomials that generalize the Racah coefficients or $6$-$j$
symbols}, SIAM J.\ Math.\ Anal.\ \textbf{10} (1979), no.~5, 1008--1016.
DOI: 10.1137/0510092.

\bibitem{RAGZ1998} A.~Ronveaux, A.~Zarzo, I.~Area, E.~Godoy,
\emph{Bernstein bases and Hahn--Eberlein orthogonal polynomials}, Integral Transforms Spec.\
Funct.\ \textbf{7} (1998), no.~1--2, 87--96. DOI: 10.1080/10652469808819188.

\bibitem{ARG2004} I.~Area, E.~Godoy, P.~Wo\'zny, S.~Lewanowicz, A.~Ronveaux,
\emph{Formulae relating little $q$-Jacobi, $q$-Hahn and $q$-Bernstein polynomials:
application to $q$-B\'ezier curve evaluation}, Integral Transforms Spec.\ Funct.\ \textbf{15}
(2004), no.~5, 375--385. DOI: 10.1080/10652460410001727491.

\bibitem{LWAG2008} S.~Lewanowicz, P.~Wo\'zny, I.~Area, E.~Godoy,
\emph{Multivariate generalized Bernstein polynomials: identities for orthogonal polynomials
of two variables}, Numer.\ Algorithms \textbf{49} (2008), 199--220.
DOI: 10.1007/s11075-008-9168-9.

\bibitem{Phillips1997} G.~M.~Phillips,
\emph{Bernstein polynomials based on the $q$-integers}, Ann.\ Numer.\ Math.\ \textbf{4}
(1997), 511--518.

\bibitem{OrucPhillips2003} H.~Oru\c{c}, G.~M.~Phillips,
\emph{$q$-Bernstein polynomials and B\'ezier curves}, J.\ Comput.\ Appl.\ Math.\ \textbf{151}
(2003), 1--12. DOI: 10.1016/S0377-0427(02)00733-1.

\bibitem{QinHu2013} X.~Qin, G.~Hu, N.~Zhang, X.~Shen, Y.~Yang,
\emph{A novel extension to the polynomial basis functions describing B\'ezier curves and
surfaces of degree $n$ with multiple shape parameters}, Appl.\ Math.\ Comput.\ \textbf{223}
(2013), 1--16. DOI: 10.1016/j.amc.2013.07.073.

\bibitem{DelgadoPena2020} J.~Delgado, J.~M.~Pe\~na,
\emph{Geometric properties and algorithms for rational $q$-B\'ezier curves and surfaces},
Mathematics \textbf{8} (2020), no.~4, art.~541. DOI: 10.3390/math8040541.

\bibitem{KLS2010} R.~Koekoek, P.~A.~Lesky, R.~F.~Swarttouw,
\emph{Hypergeometric Orthogonal Polynomials and Their $q$-Analogues}, Springer Monographs in
Mathematics, Springer, Berlin, 2010. DOI: 10.1007/978-3-642-05014-5.

\bibitem{Ismail2005} M.~E.~H.~Ismail,
\emph{Classical and Quantum Orthogonal Polynomials in One Variable}, Encyclopedia Math.\
Appl.\ \textbf{98}, Cambridge Univ.\ Press, Cambridge, 2005 (hardback edition).
DOI: 10.1017/CBO9781107325982.

\bibitem{GasperRahman2004} G.~Gasper, M.~Rahman,
\emph{Basic Hypergeometric Series}, 2nd ed., Encyclopedia Math.\ Appl.\ \textbf{96},
Cambridge Univ.\ Press, Cambridge, 2004.

\bibitem{DLMF} F.~W.~J.~Olver, A.~B.~Olde Daalhuis, D.~W.~Lozier, B.~I.~Schneider,
R.~F.~Boisvert, C.~W.~Clark, B.~R.~Miller, B.~V.~Saunders, H.~S.~Cohl, M.~A.~McClain, eds.,
\emph{NIST Digital Library of Mathematical Functions}, Release~1.2.4 of 2025,
\S\S18.25 and 18.28, \url{https://dlmf.nist.gov/} (accessed 27~July 2026).

\bibitem{NSU1991} A.~F.~Nikiforov, S.~K.~Suslov, V.~B.~Uvarov,
\emph{Classical Orthogonal Polynomials of a Discrete Variable}, Springer Series in
Computational Physics, Springer, Berlin, 1991.

\bibitem{Foup2008} M.~Foupouagnigni,
\emph{On difference equations for orthogonal polynomials on nonuniform lattices},
J.\ Difference Equ.\ Appl.\ \textbf{14} (2008), no.~2, 127--174.
DOI: 10.1080/10236190701536199.

\bibitem{FKKM2013} M.~Foupouagnigni, W.~Koepf, M.~Kenfack-Nangho, S.~Mboutngam,
\emph{On solutions of holonomic divided-difference equations on nonuniform lattices},
Axioms \textbf{2} (2013), no.~3, 404--434. DOI: 10.3390/axioms2030404.

\bibitem{FoupMbout2019} M.~Foupouagnigni, S.~Mboutngam,
\emph{On the polynomial solution of divided-difference equations of the hypergeometric type
on nonuniform lattices}, Axioms \textbf{8} (2019), no.~2, art.~47.
DOI: 10.3390/axioms8020047.

\bibitem{Farin2002} G.~Farin,
\emph{Curves and Surfaces for CAGD: A Practical Guide}, 5th ed., Morgan Kaufmann, 2002.

\bibitem{Prautzsch2002} H.~Prautzsch, W.~Boehm, M.~Paluszny,
\emph{B\'ezier and B-Spline Techniques}, Springer, Berlin--Heidelberg, 2002.
DOI: 10.1007/978-3-662-04919-8.

\bibitem{Kulfan2008} B.~M.~Kulfan,
\emph{Universal parametric geometry representation method}, J.\ Aircraft \textbf{45} (2008),
no.~1, 142--158. DOI: 10.2514/1.29958.

\bibitem{DerksenRogalsky2010} R.~W.~Derksen, T.~Rogalsky,
\emph{B\'ezier-PARSEC: an optimized aerofoil parameterization for design}, Adv.\ Eng.\
Software \textbf{41} (2010), no.~7--8, 923--930. DOI: 10.1016/j.advengsoft.2010.05.002.

\bibitem{Masters2017} D.~A.~Masters, N.~J.~Taylor, T.~C.~S.~Rendall, C.~B.~Allen, D.~J.~Poole,
\emph{Geometric comparison of aerofoil shape parameterization methods}, AIAA J.\ \textbf{55}
(2017), no.~5, 1575--1589. DOI: 10.2514/1.J054943.

\bibitem{HicksHenne1978} R.~M.~Hicks, P.~A.~Henne,
\emph{Wing design by numerical optimization}, J.\ Aircraft \textbf{15} (1978), no.~7,
407--412. DOI: 10.2514/3.58379.

\bibitem{PieglTiller1997} L.~Piegl, W.~Tiller,
\emph{The NURBS Book}, 2nd ed., Springer, Berlin, 1997. DOI: 10.1007/978-3-642-59223-2.

\bibitem{SederbergParry1986} T.~W.~Sederberg, S.~R.~Parry,
\emph{Free-form deformation of solid geometric models}, Comput.\ Graph.\ (SIGGRAPH~'86)
\textbf{20} (1986), no.~4, 151--160. DOI: 10.1145/15886.15903.

\end{thebibliography}
\end{document}